\documentclass[reqno,11pt]{amsart}
\usepackage{relsize}
\usepackage{scalerel}
\usepackage[margin=1in]{geometry}
\usepackage{stackengine,wasysym}
\usepackage{todonotes}
\usepackage{xcolor}
\usepackage{mathrsfs}
\usepackage{dsfont}
\usepackage{mathtools}
\usepackage[hyperfootnotes=false,colorlinks=true,linkcolor = blue, urlcolor  = blue, citecolor = blue]{hyperref}
\usepackage[sort,nocompress]{cite}
\usepackage{float}
\usepackage{amsmath, amsthm, amssymb}
\usepackage{times}
\usepackage{color}
\usepackage{comment}
\usepackage[toc,page]{appendix}
\usepackage{verbatim}
\usepackage{enumerate}
\usepackage{cite}

\allowdisplaybreaks

\newcommand{\pa}{\partial}
\newcommand{\la}{\label}
\newcommand{\fr}{\frac}
\newcommand{\na}{\nabla}
\newcommand{\be}{\begin{equation}}
\newcommand{\ee}{\end{equation}}
\newcommand{\ba}{\begin{array}{l}}
\newcommand{\ea}{\end{array}}

\newcommand{\beg}{\begin}

\renewcommand{\l}{\Lambda}
\renewcommand{\j}{\mathcal J_\eta}
\newcommand{\jk}{\mathcal J_{\eta_k}}

\usepackage{MnSymbol,wasysym}
\newcommand{\N}{\mathbb N}

\newcommand{\R}{\mathbb R}

\def\TT{{\mathbb T}}
\def\NN{\mathbb N}

\theoremstyle{plain}

\newtheorem{Thm}{Theorem}[section]

\newtheorem{lem}[Thm]{Lemma}
\newtheorem{prop}[Thm]{Proposition}

\theoremstyle{definition}

\theoremstyle{remark}
\newtheorem{rem}{Remark}

\numberwithin{equation}{section}
\title{On the Non-isothermal Nernst-Planck-Navier-Stokes System}

\author[E. Abdo]{Elie Abdo}
\address[E. Abdo]
{	Department of Mathematics \\
     American University of Beirut \\
	Beirut 1107-2020\\Lebanon.} \email{ea94@aub.edu.lb}

\author[Q. Lin]{Quyuan Lin}
\address[Q. Lin]
{	School of Mathematical and Statistical Sciences \\
Clemson University\\
Clemson, SC 29634, USA.} \email{quyuanl@clemson.edu}
\date{\today}

\begin{document}

\begin{abstract}

Electrodiffusion has been extensively studied in the isothermal setting, whereas the mathematical theory of thermally coupled electrodiffusion remains comparatively underdeveloped. We investigate a non-isothermal electrodiffusion model describing the evolution of multiple ionic species with different diffusivities and valences in a two-dimensional incompressible viscous fluid. The coupling to a spatially and temporally varying temperature gives rise to a nonlinear and nonlocal system with logarithmic nonlinearities in the ionic fluxes. We establish local well-posedness for strictly positive initial concentrations and prove global well-posedness when the initial temperature is close to a homogeneous state  by developing a new entropy structure tailored to thermodiffusive
effects. No smallness assumption is imposed on the initial ionic concentrations or fluid velocity. To overcome the singularity of the logarithmic terms, we develop a novel cutoff–mollification regularization, derive uniform logarithmic estimates, and prove persistence of strict positivity of the ionic concentrations. This positivity removes the singular behavior of the logarithmic nonlinearities and is essential for the uniqueness argument. These results
provide a rigorous mathematical framework for the analysis of non-isothermal electrohydrodynamics systems.

\end{abstract}

\maketitle
{\bf{MSC Subject Classifications:}} 35Q35, 35A01, 76D03.

\vspace{0.5cm}

{\bf{Keywords:}}  electrodiffusion, non-isothermal, Nernst--Planck--Navier--Stokes system, well-posedness, relative entropy, logarithmic nonlinearity.

\vspace{0.5cm}

\section{Introduction}
Electrodiffusion describes the transport of charged particles under the combined effects of concentration gradients and electric fields. 
It arises in a wide range of applications, including neuroscience \cite{jasielec2021electrodiffusion,mori2009numerical,lopreore2008computational,koch2004biophysics,savtchenko2017electrodiffusion}, semiconductor theory \cite{biler1994debye,gajewski1986basic,mock1983analysis}, water purification, desalination, ion separation \cite{alkhad2022electrochemical,zhu2020ion,lee2018diffusiophoretic,yang2019review,gao2014high,lee2016membrane}, and battery technology \cite{tan2016computational}. 

Temperature variations play an important role in many applications of electrodiffusion models. One particularly important example is the control of dendrite growth near nucleation sites, which is closely related to the performance, safety, and lifetime of high-energy-density batteries \cite{tan2016computational}. During charging and discharging cycles, the temperature distribution inside a battery evolves and may influence ionic transport, electrochemical efficiency, and hydrodynamic stability. Classical electrodiffusion models are often studied in the isothermal setting, which removes thermal coupling and thermodiffusive effects. In order to obtain a more accurate description of transport phenomena in nonuniform thermal environments, it is natural to develop and analyze non-isothermal models that incorporate temperature-dependent effects. %In addition, buoyancy forces arising from nonhomogeneous temperature and salinity distributions can significantly affect the fluid motion and stability of the system, especially in contexts like groundwater and seawater \cite{yang2010numerically,gray1976validity,mayeli2021buoyancy,petcu2009some}. These forces are crucial to the Electrodiffusion process but are neglected in existing Electrodiffusion models. 
To push the boundaries of precision and realism, it is important to develop and analyze more comprehensive models that account for these key factors from thermodynamics.

\smallskip

\noindent
{\bf Non-isothermal Poisson--Nernst--Planck system.} 
Mathematically, for a system of $N$ ionic species with concentrations $c_i=c_i(x,t)$, the evolution of each species is governed by the conservation law
\begin{equation}\label{eqn:NP}
    \partial_t c_i + \nabla\cdot J_i = 0, \quad i=1,\dots, N,
\end{equation}
where $J_i$ denotes the ionic flux. Assuming linear Onsager kinetics, the ionic flux is postulated as 
\[
    J_i = -b_i c_i \nabla\mu_i,
\]
where $b_i=\frac{D_i}{k_BT}$ is the mobility of the $i$-th species by Einstein's relation, $D_i>0$ is its diffusivity, $k_B$ is Boltzmann's constant, $T=T(x,t)>0$ is the absolute temperature, and $\mu_i$ is the corresponding chemical potential. 
Denoting by $\phi=\phi(x,t)$ the electrostatic potential, by $e$ the elementary charge, and by $\varepsilon>0$ the dielectric permittivity of the solvent, the Helmholtz free energy functional is given by
\begin{equation*}
\mathcal F[c_i,\phi,T]
=
\int
\left(
\sum_{i=1}^N k_B T\, (c_i\log c_i - c_i+1)
+
\sum_{i=1}^N z_i e\, c_i \phi
+
\frac{\varepsilon}{2}|\nabla\phi|^2
\right)\,dx,
\end{equation*}
where the first term represents the entropic contribution, the second term accounts
for electrostatic energy, and the last term is the energy of the electric field. The chemical potential $\mu_i$ is then obtained by variational differentiation
with respect to $c_i$ \cite{kilic2007steric1,kilic2007steric2}, yielding
\begin{equation*}
\mu_i
=
\frac{\delta \mathcal F}{\delta c_i}
=
k_B T \log c_i + z_i e \phi.
\end{equation*}
Here $z_i$ is the valence of the $i$-th species.
Substituting the expression for
$\nabla\mu_i$ in $J_i$ yields
\begin{equation}\label{eqn:flux}
J_i
=
- D_i
\left(
\nabla c_i
+
\frac{z_i e}{k_B T} c_i \nabla \phi
+
c_i \log c_i \nabla \log T
\right),
\end{equation}
which consists of diffusion, electrostatic drift (electromigration), and a thermodiffusive contribution induced by temperature gradients. 
The electrostatic potential $\phi$ is determined self-consistently by the charge density $\rho$ through Gauss's Law:
\begin{equation}\label{eqn:poisson}
    -\varepsilon \Delta \phi = \rho \text{ with } \rho =  e\sum\limits_{i=1}^N z_i c_i.
\end{equation}
The temperature evolution is described by a heat equation
\begin{equation}\label{eqn:temperature}
   \partial_t T -\kappa  \Delta T=0, 
\end{equation}
where $\kappa>0$ is the thermal conductivity. The resulting coupled system \eqref{eqn:NP}--\eqref{eqn:temperature} will be referred to as the non-isothermal Poisson--Nernst--Planck (PNP) system. When the temperature is constant in both space and time, the thermodiffusion term vanishes, and the system reduces to the classical isothermal PNP system.

\smallskip

\noindent
{\bf Non-isothermal Nernst--Planck--Navier--Stokes system.} 
When the ionic solution is transported by an incompressible viscous fluid, the non-isothermal PNP system is coupled to the Navier--Stokes equations. This leads to the non-isothermal Nernst--Planck--Navier--Stokes (NPNS) system
\begin{subequations}\label{N-NPNS-system-original}
\begin{align}
&\partial_t c_i + u\cdot\nabla c_i
=
D_i\nabla \cdot
\left(
 \nabla c_i
+
 \frac{z_i e}{k_B T} c_i \nabla \phi
+
c_i \log c_i \nabla \log T
\right), \quad
 i=1,\dots,N, \label{N-NPNS-system-original-1}
 \\
&- \varepsilon \Delta \phi= \rho
=
\sum_{i=1}^N z_i e\, c_i, \label{N-NPNS-system-original-2}
\\
&\partial_t u + u\cdot\nabla u + \nabla p - \nu \Delta u = -\rho\nabla \phi + g\alpha_T(T-T_r) \vec{k}, \label{N-NPNS-system-original-3}
\\
&\nabla \cdot u = 0, \label{N-NPNS-system-original-4}
\\
&\partial_t T + u\cdot\nabla T- \kappa \Delta T
= 0, \label{N-NPNS-system-original-5}
\end{align}
\end{subequations}
where $u$ is the divergence-free velocity field, $p$ is the pressure, $\nu>0$ is the kinematic viscosity, $g$ is the gravity acceleration, $\alpha_T>0$ is the thermal expansion coefficient, $T_r$ is a reference temperature, and $\vec{k}$ is the vertical unit vector. The term $g\alpha_T (T-T_r)\vec{k}$ represents the buoyancy force due to the nonhomogeneous temperature. When the temperature $T \equiv T_r$ is constant, the buoyancy and thermodiffusion terms vanish, and the system reduces to the classical isothermal NPNS system.

\smallskip

\noindent
{\bf Literature review.} The NPNS system has been extensively studied in the isothermal setting. In three dimensions, the existence of global weak solutions under blocking boundary conditions was established in \cite{fischer2017global,jerome2009global}. Global existence for small initial data and external forces was proved in \cite{ryham2009existence,schmuck2009analysis}. For the system with Robin boundary condition for the electric potential and blocking boundary condition for the ionic concentrations, global existence and stability of solutions in two dimensions were obtained in \cite{bothe2014global}. In \cite{constantin2019nernst}, the authors investigated the two-dimensional case under various boundary conditions for $c_i$ and proved the existence and uniqueness of global strong solutions, as well as their convergence to Boltzmann steady states. The existence of strong solutions in three dimensions was later established in \cite{constantin2021nernst} for the case of two ionic species and for systems with multiple ionic species having equal diffusivities. Regarding the stability of Boltzmann states, \cite{constantin2022nernst} proved nonlinear stability in both two- and three-dimensional bounded domains under suitable boundary conditions. By contrast, instabilities in simplified models have been studied both analytically and numerically in \cite{rubinstein2000electro,zaltzman2007electro}, while physical observations of such instabilities under selective boundary conditions were reported in \cite{rubinstein2008direct}. In the periodic setting, the long-time dynamics of the NPNS system were analyzed in \cite{abdo2021long,abdo2024long2}, where exponential decay of solutions was proved. Electrodiffusion phenomena have also been studied in other fluid models, including ideal fluids, leading to Nernst--Planck--Euler systems \cite{ignatova2021global,abdo2023global,shen2022stability,zhang2015global,zhang2020inviscid}, and porous media, leading to Nernst--Planck--Darcy systems \cite{ignatova2022global,abdo2023global, abdo2025long, abdo2026long}.

In the non-isothermal setting, the authors in \cite{abdo2024three} studied the Nernst-Planck-Boussinesq (NPB) system and established the global existence of weak solutions and addressed their long-time dynamics in three dimensions. The NPB system is closely related to the non-isothermal NPNS system \eqref{N-NPNS-system-original} introduced in this paper. The main difference is that the logarithmic thermodiffusion term
$c_i \log c_i \nabla \log T$ appearing in the ionic flux is omitted in the NPB system. Such an omission may be justified in regimes where temperature gradients are sufficiently small and Soret-type effects have a minor influence on ionic transport. In regimes where thermal gradients have a non-negligible effect, however, the thermodiffusion term should be retained. From the analytical viewpoint, this term introduces substantial difficulties, since it couples the ionic concentration to the temperature gradient through the nonlinear factor $c_i \log c_i$ and the derivative $\nabla \log T$.

\smallskip
\noindent
{\bf Main results.} We consider system \eqref{N-NPNS-system-original} with initial condition $(c_i(0), u(0), T(0)) = (c_{i0}, u_0, T_0)$ in a two-dimensional torus $\TT^2$ with unit volume. In the periodic case, the reference temperature $T_r = \int_{\TT^2} T_0 dx$ is the initial average, and the initial ionic concentrations satisfy the compatibility condition
\begin{equation}\label{eqn:compa}
    \int_{\TT^2} \sum_{i=1}^N z_i c_{i0} = 0.
\end{equation}

We denote by $H$ and $V$ the $L^2$ and $H^1$ closures of the space of divergence-free smooth vector fields. Let $A=-\mathbb P\Delta$ be the Stokes operator, where $\mathbb P$ is the Leray projection, and denote by $\mathcal D(A) = V\cap H^2$. 

Our first result is the local well-posedness of system \eqref{N-NPNS-system-original}. 

\begin{Thm}\label{thm:lwp-intro}
    Let $c_{i0} \in H^1, u_0 \in V, T_0 \in H^{\frac{5}{2}}$, with $T_0\geq T^*>0$ and $c_{i0}\geq a_0>0$. Suppose the initial concentrations $c_{i0}$ satisfy the compatibility condition \eqref{eqn:compa} and the initial velocity $u_0$ has zero mean. There exists a time $\mathcal{T}_0$ such that the non-isothermal NPNS system has a unique local solution on $[0, \mathcal{T}_0]$ obeying 
    \begin{equation}\label{lwp-regularity}
        \begin{split}
        &c_i \in L^\infty(0,\mathcal{T}_0; H^1)\cap L^2(0,\mathcal{T}_0; H^2),
        \\
        &u \in L^\infty(0,\mathcal{T}_0; V)\cap L^2(0,\mathcal{T}_0; \mathcal D(A)),
        \\
        &T \in L^\infty(0,\mathcal{T}_0; H^{\frac52})\cap L^2(0,\mathcal{T}_0; H^{\frac72}).
        \end{split}
    \end{equation}
    Moreover, $u$ has zero mean, $T\geq T^*$, and $c_i\geq a$ for some constant $a>0$.
\end{Thm}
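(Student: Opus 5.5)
Because the temperature equation is decoupled from the concentrations except through the drift $u$, the plan is to split the system into the thermal--fluid part and the ionic part and run a regularized Galerkin/fixed-point scheme. First, I will treat $T$: by the maximum principle for the transport--diffusion equation $\partial_t T+u\cdot\nabla T-\kappa\Delta T=0$ with divergence-free $u$, we get $T\ge T^*$ as long as the solution exists. Then $H^{5/2}$ energy estimates, obtained by applying $\Lambda^{5/2}$, pairing with $\Lambda^{5/2}T$, and using a Kato--Ponce commutator, will control $\|T\|_{H^{5/2}}$ and $\int_0^t\|T\|_{H^{7/2}}^2$ in terms of $\|u\|_{L^2H^2}$ and $\|u\|_{L^\infty H^1}$ on short times. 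Positivity of $T$ together with $T\in H^{5/2}\subset W^{1,\infty}\cap C$ gives $\nabla\log T=\nabla T/T\in L^\infty_tH^{3/2}\cap L^2_tH^{5/2}$ and $1/T$ bounded with the same regularity. This is exactly what the $c_i$ equations will need.

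Next, the singular factor $c_i\log c_i$ is removed by a cutoff--mollification regularization. I will replace $\log c_i$ by $\log_\delta(c_i)=\log(\max(c_i,\delta))$ (smoothed, e.g.\ $\eta_\delta * $ of a $C^2$ cutoff), so that the flux becomes globally Lipschitz in $c_i$ on bounded sets. I will also mollify the transport velocity and $\nabla\phi$ if needed. For this regularized system, a Galerkin or Banach fixed-point argument in $L^\infty H^1\cap L^2H^2$ yields local solutions. I then derive estimates uniform in $\delta$ on a time $\mathcal T_0$ depending only on the data. For the $H^1$ bound, multiply the $c_i$ equation by $-\Delta c_i$. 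The electromigration term $\nabla\cdot(c_i\nabla\phi/T)$ is handled by elliptic regularity $\|\nabla\phi\|_{W^{1,p}}\lesssim\|\rho\|_{L^p}$ and Ladyzhenskaya/Agmon in 2D. The thermodiffusion term $\nabla\cdot(c_i\log c_i\nabla\log T)$ expands to $(1+\log c_i)\nabla c_i\cdot\nabla\log T+c_i\log c_i\Delta\log T$. Its bounds need $\|\log c_i\|_{L^\infty}$, i.e.\ both an upper bound (from $H^2$ in 2D via Brezis--Gallouet) and, crucially, a lower bound on $c_i$. The velocity estimate in $V\cap L^2\mathcal D(A)$ is standard 2D Navier--Stokes with forcing $-\rho\nabla\phi+g\alpha_T(T-T_r)\vec k$, bounded by $\|c\|_{H^1}^2$ and $\|T\|_{L^2}$. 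Summing gives a differential inequality $Y'\le C(1+Y)^m$ for $Y=\sum\|c_i\|_{H^1}^2+\|u\|_V^2+\|T\|_{H^{5/2}}^2$, provided the logarithm is controlled, and hence a uniform existence time.

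The main obstacle is the persistence of strict positivity $c_i\ge a>0$ uniformly in $\delta$, since the thermodiffusive flux has no sign and $\log c_i\to-\infty$ at zero. My approach is to write the equation in nondivergence form, $\partial_tc_i+(u-D_iz_ie\nabla\phi/(k_BT)-D_i(1+\log_\delta c_i)\nabla\log T)\cdot\nabla c_i-D_i\Delta c_i=D_ic_i\,(\ldots)$. The zeroth-order coefficient is $D_i\bigl(z_ie\,\nabla\cdot(\nabla\phi/(k_BT))+\log_\delta c_i\,\Delta\log T\bigr)$, and I bound it below by $-C(t)(1+|\log_\delta c_i|)$ with $C\in L^1(0,\mathcal T_0)$, using $\Delta\phi\in L^\infty$ (from $\rho\in H^2\subset L^\infty$, integrable in time) and $\Delta\log T\in L^\infty$. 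Then $w=-\log c_i$, or a comparison with the ODE $\dot m=-C(t)m(1+|\log m|)$, gives $\min c_i(t)\ge \exp\bigl(-(1+|\log a_0|)e^{\int_0^tC}\bigr)=:a$. If $\delta<a$, this shows the cutoff is inactive, so the regularized solution solves the original system. Passing $\delta\to0$ (or simply fixing $\delta<a$) gives existence with the regularity \eqref{lwp-regularity}. Zero mean of $u$ follows because the forcing has zero mean: $\int\rho\nabla\phi=-\varepsilon\int\Delta\phi\nabla\phi=0$ and $\int(T-T_r)=0$ by conservation of the mean of $T$.

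Uniqueness is an $L^2$ (for $c_i$, $u$) and $H^1$ (for $T$) energy estimate on differences. The difference of thermodiffusive terms, $c_1\log c_1-c_2\log c_2$, is Lipschitz with constant $1+|\log a|+\log\max c$, thanks to the lower bound $a$. With Gronwall, using the $L^2_tH^2$ regularity of both solutions, this concludes the proof.
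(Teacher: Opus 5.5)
You follow the paper's architecture: temperature first, a cutoff of the logarithm, an $H^1$ estimate for $c_i$ by testing with $-\Delta c_i$, a minimum-principle lower bound from $\dot m\ge -C(t)\,m(1+|\log m|)$, an inactive cutoff once $\delta<a$, and $L^2$/$H^1$ difference estimates for uniqueness. However, the step that carries the whole argument is circular as written.

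\textbf{The circularity.} You close the $H^1$ estimate for $c_i$ only ``provided the logarithm is controlled'', i.e.\ using a lower bound $c_i\ge a$. The cutoff by itself only gives $|\log_\delta c_i|\le|\log\delta|$, which is not uniform in $\delta$. Your lower bound in turn requires $\int_0^{\mathcal T_0}C(t)\,dt<\infty$ uniformly in $\delta$. You obtain that from $\Delta\phi=-\rho/\varepsilon\in L^1_tL^\infty_x$, i.e.\ from the $L^2_tH^2$ bound on the $c_i$, which is precisely the output of the $H^1$ estimate. So $a$ and $\mathcal T_0$ are each defined through the other, and no $\delta$-uniform bound has been established.

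\textbf{How the paper breaks the loop.} Steps 2--3 exist exactly for this, and they need no lower bound. First $c_i^\eta$ is bounded in $L^\infty L^2\cap L^2H^1$ using only nonnegativity ($|c\log c|\lesssim 1+c^{9/8}$). Then an $L^2$ estimate for $\log c_i^\eta$ produces the nonnegative dissipation $\mathscr D_i^\eta=-D_i\int_{\{c_i^\eta\le1\}}|\nabla c_i^\eta|^2(c_i^\eta)^{-2}\log c_i^\eta$, bounded in $L^1(0,\mathcal T_2)$ uniformly in $\eta$. Since $\int_{\{c<1\}}|\nabla c\,\log c|^2\le-\int_{\{c<1\}}|\nabla c|^2c^{-2}\log c$, this dissipation controls the dangerous piece $\nabla c_i\log c_i\cdot\nabla\log T$ in the $H^1$ estimate. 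Lemma~\ref{newloc} absorbs the resulting $L^1_t$ forcing. Only afterwards is the uniform lower bound deduced from the $L^2H^2$ bound via Proposition~\ref{prop:pos-lower-bdd}, so the chain of estimates is linear.

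\textbf{How to repair your route.} You must add a continuity (bootstrap) argument.
\begin{enumerate}
\item Let $L_0>0$ be the limit of your lower bound as $\int_0^tC\to0$; with your formula, $L_0=e^{-(1+|\log a_0|)}$. Fix $b=L_0/2$ and $\delta<b$. Let $\tau_\delta$ be the first time at which $\min_i\min_x c_i^\delta=b$.
\item On $[0,\tau_\delta]$ your $H^1$ estimate closes with constants depending only on $b$ and the data. This gives a $\delta$-independent time $\mathcal T'$ and a bound $M$ for $c_i^\delta$ in $L^\infty H^1\cap L^2H^2$.
\item Then $\int_0^tC\le C(M,\text{data})(\sqrt t+t)$, which is small for $t\le\mathcal T''$. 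The minimum principle therefore gives $c_i^\delta>b$ on $[0,\min(\tau_\delta,\mathcal T',\mathcal T'')]$, which forces $\tau_\delta>\min(\mathcal T',\mathcal T'')$.
\end{enumerate}
This requires $t\mapsto\min_x c_i^\delta(t)$ to be continuous and the pointwise argument to hold down to $t=0$. That fails for merely $H^1$ data in two dimensions, so you should also mollify the initial data, as the paper does, and work with smooth regularized solutions.

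With this addition your proof is complete and dispenses with the $L^2$ estimate for $\log c_i$ altogether. The paper's ordering instead yields the $H^1$ bound independently of positivity and needs no continuity argument.
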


Our second result extends the local solutions to global ones, provided that the initial temperature is close to a homogeneous state in the $L^\infty$ norm.
\begin{Thm}\label{thm:gwp-intro}
Let $(u, c_i, T)$ be a smooth local solution of the non-isothermal NPNS system on some time interval $[0, \mathcal{T}]$. If $\|T_0 - T_r\|_{L^\infty}$ is sufficiently small and satisfies conditions \eqref{small-1} and \eqref{small-2},
then for any $t\in[0,\mathcal T]$, it holds that
\be 
\sum\limits_{i=1}^{N}\|c_i(t)\|_{H^1} + \|u(t)\|_{H^1} + \|T(t)-T_r\|_{H^{\frac{5}{2}}}  \le \Gamma_0.
\ee  
Here $\Gamma_0$ depends only on the initial conditions and the parameters of the system.
In particular, this allows the extension of local solutions to global ones. 
\end{Thm}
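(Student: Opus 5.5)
The plan is to establish a priori bounds in a fixed order: temperature first, then an entropy/energy estimate for $(c_i,u)$, and finally the $H^1$ estimates. The temperature equation \eqref{N-NPNS-system-original-5} is a transport-diffusion equation with divergence-free drift. The maximum principle therefore gives
\[
\min T_0\le T\le \max T_0, \qquad \|T(t)-T_r\|_{L^\infty}\le \|T_0-T_r\|_{L^\infty}=:\delta .
\]
In particular $T\ge T^*$. The $L^2$ energy of $T-T_r$ also decays, with $\int_0^\infty\|\nabla T\|_{L^2}^2\,dt\le C\|T_0-T_r\|_{L^2}^2$. These bounds hold independently of $c_i$ and $u$. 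Higher norms of $T$ will be handled last, once $u$ is controlled in $L^\infty_tH^1\cap L^2_tH^2$.

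The core step, and the one I expect to be the main obstacle, is a modified entropy inequality that replaces the classical Nernst--Planck free energy. I will test the $c_i$ equation with $\log c_i + \frac{z_ie}{k_BT}\phi$ divided by a suitable weight. Writing the flux as $J_i=-D_ic_i\nabla(\log c_i+\frac{z_ie}{k_BT}\phi)-D_ic_i\log c_i\nabla\log T + (\text{terms with }\nabla T^{-1})$, this produces a dissipation $\sum_i D_i\int c_i|\nabla(\log c_i+\frac{z_ie\phi}{k_BT})|^2$. Against it stand error terms that all carry a factor $\nabla T$ (or $\partial_t T^{-1}$) times $c_i\log c_i$, $c_i\phi$, or $\rho\phi$. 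The kinetic energy $\frac12\|u\|_{L^2}^2$ is added so that the forcing $-\rho\nabla\phi$ cancels against the transport part, as in the isothermal case. The buoyancy term is then bounded by $\|T-T_r\|_{L^2}\|u\|_{L^2}$ plus viscous dissipation.

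The error terms will be controlled by $\|\nabla T\|_{L^\infty}$ or $\|\nabla\log T\|_{L^4}$ times quantities such as $\int c_i(1+|\log c_i|)^2$. These are absorbed using the dissipation, the Gagliardo--Nirenberg inequality applied to $\sqrt{c_i}$, and a Gronwall argument whose exponent involves $\int_0^t\|\nabla T\|^2_{L^\infty}$ and $\int_0^t\|\partial_tT\|_{L^\infty}$. Smallness of $\delta$ (conditions \eqref{small-1}, \eqref{small-2}) is needed precisely so that the thermodiffusive term $\nabla\cdot(c_i\log c_i\nabla\log T)$ cannot destroy coercivity. My first attempt would be to check that the symmetric part of the resulting quadratic form in $(\nabla\log c_i,\nabla\phi)$ remains positive definite when $\delta/T^*$ is small, since $T$ then stays uniformly close to $T_r$. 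The outcome of this step is a uniform bound on $\sum_i\int c_i\log c_i$, $\|\nabla\phi\|_{L^2}$ and $\|u\|_{L^2}$, together with time-integrated dissipation.

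With the entropy bound in hand, I proceed as in the two-dimensional isothermal theory. First I obtain $L^2$ bounds on $c_i$: test with $c_i$, use the Ladyzhenskaya inequality, and use $\|\nabla\phi\|_{L^4}$ controlled via elliptic regularity by $\|\rho\|_{L^{4/3}}$. The logarithmic thermodiffusive term is handled with $|c\log c|\lesssim c^{1+\epsilon}+1$. Next comes the $H^1$ bound for $u$ from the standard 2D enstrophy estimate, with the forcing $\rho\nabla\phi\in L^2_tL^2$. Then the $H^1$ bound for $c_i$ follows by testing with $-\Delta c_i$. The final step is parabolic regularity for $T$ in $H^{5/2}$, which takes the $u$ bounds as input. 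Every estimate closes by Gronwall with constants depending only on the initial data, which gives $\Gamma_0$. Combined with Theorem~\ref{thm:lwp-intro} and the persistence of the lower bound $c_i\ge a$ on each interval, a continuation argument then extends the local solution globally.
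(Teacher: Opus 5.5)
There is a genuine gap: the order in which you close the estimates is circular. In the entropy step you need $\int_0^t\|\nabla T\|_{L^\infty}^2\,ds$ and $\int_0^t\|\partial_t T\|_{L^\infty}\,ds$ (or at least $\nabla T$ in $L^2_tL^8$) bounded independently of $t$, since $\Gamma_0$ may not depend on $\mathcal T$. At that stage you only have the maximum principle and $\int_0^\infty\|\nabla T\|_{L^2}^2\,ds\lesssim\|T_0-T_r\|_{L^2}^2$, and these control neither quantity. Any such bound must come from the $H^{3/2}$ estimate of $T$. There the advection term costs $\int_0^t\|\nabla u\|_{L^2}^2\,ds$, weighted by $\sup(1+\|u\|_{L^2}^2)\|\Lambda^{3/2}T\|_{L^2}^2$, and $\int_0^t\|\nabla u\|_{L^2}^2\,ds$ is exactly what the entropy dissipation is supposed to supply. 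So ``temperature first, higher norms of $T$ last'' cannot work. The same problem reappears in your $L^2$ and $H^1$ estimates for $c_i$, which already need $\Lambda^{5/2}T\in L^2_tL^2$.

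The paper closes the entropy and the $H^{3/2}$ temperature bound simultaneously by a continuity argument. On an interval where $\mathcal Q\le 2\mathcal Q_0$ and $\|\Lambda^{3/2}T\|_{L^2}^2\le\|\Lambda^{3/2}T_0\|_{L^2}^2+1$, one gets $\kappa\int_0^t\|\Lambda^{5/2}T\|_{L^2}^2\le\|\Lambda^{3/2}T_0\|_{L^2}^2+C(\mathcal Q_0,T_0)\int_0^t\|\nabla u\|_{L^2}^2$. This is fed into the entropy inequality, and one then shows that both bootstrap bounds hold strictly.

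This also shows that you have misidentified what smallness is for: there is no coercivity to protect. Test with $\mu_i=k_BT\log c_i+z_ie\phi$, which is your expression multiplied by $k_BT$. The paper equivalently tests with $T\log c_i$ and completes the square. The weight-one test function you describe does not work. Its electrostatic part $\frac{1}{k_B}\int\partial_t\rho\,\phi/T$ is not a time derivative and generates $\nabla\partial_t\phi$ terms. Its transport part no longer cancels against the kinetic energy, whose forcing $-\rho\nabla\phi$ carries no factor $1/T$.

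With $\mu_i$ as test function, everything is explicit:
\begin{itemize}
\item $J_i=-\frac{D_i}{k_BT}c_i\nabla\mu_i$, so the dissipation is the exact square $\sum_i\int\frac{D_i}{k_BT}c_i|\nabla\mu_i|^2\ge0$ for every $T>0$.
\item The $\phi$-terms give $\frac{\varepsilon}{2}\frac{d}{dt}\|\nabla\phi\|_{L^2}^2$ plus the term cancelled by $\frac12\frac{d}{dt}\|u\|_{L^2}^2$.
\item The only thermal error is $\kappa k_B\int\Delta T\,(c_i\log c_i-c_i+1)$, coming from $\partial_tT$ in $\frac{d}{dt}\int k_BT(c_i\log c_i-c_i+1)$; no $\|\partial_tT\|_{L^\infty}$ ever appears.
\item Expanding the square with $D=\min_iD_i$ produces $\sum_i4k_BD\|\sqrt T\nabla\sqrt{c_i}\|_{L^2}^2+2\varepsilon^{-1}D\|\rho\|_{L^2}^2$ and leaves the single error $-(2D+\kappa)k_B\sum_i\int\log c_i\,\nabla c_i\cdot\nabla T$.
\end{itemize}
Note also that $\|T-T_r\|_{L^\infty}\le\delta$ gives no pointwise smallness of $\nabla T$, so it could not restore positive definiteness of a form containing $\nabla\log T$ anyway.

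Smallness actually enters through $\|\nabla T\|_{L^8}\lesssim\|T-T_r\|_{L^\infty}^{1/2}\|T-T_r\|_{H^{5/2}}^{1/2}$. This turns the remaining error into essentially a small multiple of the temperature dissipation $\|\Lambda^{5/2}T\|_{L^2}^2$. Combined with the bootstrap bound above, it becomes $C\delta\int\|\nabla u\|_{L^2}^2$, which is absorbed by $\nu\int\|\nabla u\|_{L^2}^2$; this is \eqref{small-1}. Condition \eqref{small-2} keeps the $H^{3/2}$ increment below the bootstrap margin.
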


We emphasize that the smallness condition is only on $\|T_0 - T_r\|_{L^\infty}$, which holds when the initial temperature distribution is close to equilibrium. Without such an assumption, the thermodiffusive effect can dominate the dynamics, and the global well-posedness is unclear.

\smallskip
\noindent
{\bf Mathematical challenges and strategies.} One of the principal difficulties in the analysis results from the presence of the logarithmic drift term $c_i\log c_i$ in the Nernst--Planck equations. Unlike polynomial nonlinearities, the logarithm exhibits a low-order singular behavior near zero, which creates substantial obstacles in both the local and global well-posedness theories. In particular, standard energy methods do not directly provide sufficient control to handle the logarithmic contributions, and uniqueness becomes unclear when the ionic concentrations are merely assumed to be nonnegative. These difficulties are further amplified by the strong coupling between the ionic concentrations, the fluid velocity, and the temperature.

A second major challenge concerns the global well-posedness of the system for an arbitrary number of ionic species with different diffusivities and valences. This problem remained unresolved in previous works. The main reason is that the entropy structure traditionally exploited in the analysis of Nernst--Planck systems is significantly altered by the presence of a variable temperature. In the isothermal setting, several classical cancellation mechanisms allow one to derive entropy dissipation inequalities that yield global control of the solutions. However, once temperature variations are incorporated, these cancellations break down. Furthermore, when the ionic species have different diffusivities, the entropy balance becomes even more delicate, and the standard arguments available in the literature are no longer applicable.

To overcome these difficulties and obtain the global well-posedness, we introduce a modified entropy functional that explicitly depends on the temperature and captures the thermodynamic structure of the model. The construction of this entropy constitutes one of the key contributions of the paper. The analysis then focuses on understanding the evolution of this modified entropy and exploiting it to derive a priori estimates for the system. 

After establishing the existence of local solutions, our objective is to show that these solutions can be extended beyond their local existence time. This requires obtaining bounds that remain uniform on arbitrary time intervals. The derivation of such bounds is highly nontrivial. The modified entropy depends explicitly on the temperature, and therefore its time evolution generates several nonlinear coupling terms involving both the ionic concentrations and the temperature. As a consequence, controlling the entropy cannot be achieved in isolation. Instead, one must simultaneously investigate the evolution of the temperature in higher-order Sobolev spaces. This leads naturally to a hierarchy of coupled energy estimates in which the control of one quantity depends on the control of several others. In particular, uniform bounds for the entropy require $H^1$-estimates for the ionic concentrations and the velocity field, and $H^{3/2}$-estimates for the temperature. To guarantee uniqueness and compatibility with the local well-posedness theory, higher-order energies must also be analyzed. The resulting argument involves a carefully designed chain of energy inequalities whose interactions must be balanced precisely in order to prevent the growth of the entropy.

The global extension result is obtained under the assumption that the initial temperature remains sufficiently close to its initial spatial average. This hypothesis is physically meaningful. In many practical electrochemical and fluid systems, temperature fluctuations are relatively small compared to the average operating temperature. Significant thermal gradients generally require strong external heating mechanisms or highly localized energy sources, whereas most realistic configurations operate in regimes where the temperature remains nearly homogeneous. Consequently, assuming that the initial temperature is a small perturbation of its average is consistent with a wide range of physical applications and allows the entropy structure of the system to remain sufficiently stable throughout the evolution.

The proof of local well-posedness presents a different set of challenges. As mentioned above, the logarithmic term creates serious difficulties for uniqueness. Since the derivative of the logarithm becomes singular near zero, standard uniqueness arguments fail when the concentrations are allowed to approach zero. To address this issue, we establish uniqueness under the assumption that the ionic concentrations remain uniformly bounded away from zero. This naturally raises the question of whether such a lower bound persists throughout the evolution. We therefore assume that the initial concentrations are bounded below by a positive constant and prove that this property is propagated in time.

The proof of this positivity preservation property is itself highly nontrivial. In fact, we address a family of drift-diffusion equations containing logarithmic forcing terms similar to those appearing in our original system. We establish a general result showing that positive lower bounds are preserved under suitable regularity assumptions. However, these assumptions involve higher-order norms of the concentrations, which means that the local existence theory must simultaneously provide sufficient regularity to justify the argument. As a result, even on short time intervals, the construction of a suitable chain of energy estimates becomes a delicate task.

Our strategy begins with the study of basic energy levels and their coupling. We then proceed to establish local bounds for the $H^1$-norms of the ionic concentrations. A crucial ingredient in this analysis is the study of the evolution of the quantity $\|\log c_i\|_{L^2}$. Remarkably, this evolution exhibits a previously unnoticed dissipation structure that plays a fundamental role in the analysis. The dissipation generated by this energy provides additional control over the logarithmic terms and allows us to close the higher-order estimates required for the local theory. This observation is one of the key technical innovations of the paper and serves as an essential bridge between the positivity arguments and the Sobolev regularity estimates.

Finally, the construction of local solutions requires the development of an approximation scheme compatible with all the singular features of the model. This step is particularly delicate because the temperature appears in the denominator of several terms, making standard Galerkin approximations difficult to implement. Moreover, the approximation procedure must remain meaningful at a stage where the positivity of the ionic concentrations has not yet been established, despite the presence of logarithmic nonlinearities.

To overcome these obstacles, we introduce a specialized regularization procedure combining classical mollification techniques with suitable cutoff functions. The mollifiers are used to regularize the nonlinear interactions, while the cutoff functions provide a rigorous treatment of the logarithmic terms in regions where the concentrations may become small. We prove that the resulting approximate system possesses global smooth solutions through an iterative scheme and derive estimates that are uniform with respect to the approximation parameters. These uniform bounds constitute the fundamental ingredients of classical compactness arguments. By combining compactness theorems with lower semicontinuity properties of the relevant norms, we are able to pass to the limit in the approximation scheme and recover local solutions of the original system.

Overall, the analysis requires overcoming several interconnected difficulties arising from the logarithmic nonlinearities, the temperature-dependent entropy structure, the coupling between multiple ionic species with different diffusivities, and the preservation of positivity. The resolution of these challenges relies on the construction of a new temperature-dependent entropy, the discovery of a useful logarithmic dissipation mechanism, the development of a carefully structured hierarchy of energy estimates, and the design of a robust approximation framework adapted to the singular nature of the equations.

\smallskip
\noindent
{\bf Organization of the paper.} The paper is organized as follows. In Section~\ref{sec2}, we state and prove new logarithmic estimates that will be frequently used in the paper. In Section~\ref{Sec3}, we establish a general criterion for the positive lower boundedness of solutions to drift-diffusion equations with logarithmic nonlinearities. In Section~\ref{sec4}, we construct a regularization scheme and establish Sobolev bounds of its solutions that are uniform in the regularization parameter. Section~\ref{sec5} is devoted to the local well-posedness of the non-isothermal NPNS system, whereas Section~\ref{sec6} is dedicated to its global well-posedness. In Appendix~\ref{app1}, we present some useful auxiliary lemmas that are needed for the paper. Finally, the local and global existence of solutions to the regularized scheme are established in Appendices~\ref{sec:appendix-b} and \ref{sec:appendix-c} via an iterative approximation.

Throughout the paper, $C$ denotes a positive universal constant that may change from line to line. When necessary, we denote by $C_{a,b,\dots}$ a positive constant depending on $a,b,\dots$.

\section{Logarithmic product estimates} \la{sec2}

The thermodiffusive effects on the ionic fluxes are dominated by a logarithm of the ionic concentrations, from which many challenges arise in the analysis of the problem. 

In this section, we present new logarithmic product  estimates that will be used later to establish the local and global well-posedness of the non-isothermal NPNS system. The estimates in this section are over $\mathbb T^2$.

For any $\eta > 0$, let $\chi(x)$ be a bounded smooth cutoff function such that $\chi(x) = 0$ when $x \le \eta$. With slight abuse of notation, we write $\chi(f)\log f$ to represent the cutoff extension, i.e., 
\[
\chi(f)\log f = \begin{cases}
    \chi(f)\log f, \quad & f>0,
    \\
    0, \quad &f\leq 0.
\end{cases}
\]
For any $s\in\mathbb R$, let $\Lambda^s= (-\Delta)^{\frac{s}{2}}$ be the periodic fractional Laplacian of order $s$.

\beg{lem} \la{Step1}
Let $m\geq 1$ be an integer. For any $\eta > 0$, any bounded smooth cutoff function $\chi(x)$ that vanishes when $x \le \eta$, any integer $j \ge 1$, and any function $f \in H^{m+1}$, it holds that 
\be \label{prop-logproduct-step1}
\sum\limits_{i=1}^{2} \left\|\frac{(\chi \circ f) \pa_i f}{f^j}\right\|_{H^m}   \le C\left(\|f\|_{H^{m+1}} + \|f\|_{H^{m}}^{m} \|f\|_{H^{m+1}} \right)
\ee  for some positive constant $C$ that depends on $\eta$, $j$, and $\chi$. 
\end{lem}

\begin{proof}
We employ an induction argument in $m$. When $m=1$, we have
\be 
\begin{aligned}
&\sum\limits_{i=1}^{2} \left\|\frac{(\chi \circ f) \pa_i f}{f^j}\right\|_{H^1}
\\ \le & \sum\limits_{i=1}^{2} \left\|\frac{(\chi \circ f) \pa_i f}{f^j}\right\|_{L^2} + \sum\limits_{i=1}^{2} \left\|\frac{\na (\chi \circ f) \pa_i f}{f^j}\right\|_{L^2} + \sum\limits_{i=1}^{2} \left\|\frac{ (\chi \circ f)\na \pa_i f}{f^j}\right\|_{L^2} 
+ j\sum\limits_{i=1}^{2} \left\|\frac{ (\chi \circ f)\pa_i f}{f^j}\frac{\na f}{f}\right\|_{L^2}
\\ \le &C (\|f\|_{H^{2}} + \|\na f\|_{L^4}^2)
\le C(\|f\|_{H^2} +  \|f\|_{H^1}\|f\|_{H^2})
\end{aligned}
\ee for any $f \in H^2$. Here we have used the upper boundedness of $\chi$ and $\chi'$ on $\R$, and the upper boundedness of $\frac1f$ by $\frac1\eta$ due to the vanishing of $\chi \circ f$ when $f < \eta$.  

Next, suppose that our desired statement holds for a positive integer $m$. We will show that for any $\eta > 0$, any bounded smooth cutoff function $\psi(x)$ that vanishes when $x \le \eta$, any integer $j \ge 1$, and any function $g \in H^{m+2}$, we have 
% \be 
% \left\|\frac{\psi \circ g}{g} \right\|_{H^{m+1}}
% \le C\left(1 + \|g\|_{H^{m}} + \|g\|_{H^{m+1}} + \|g\|_{H^{m}}^{m}\|g\|_{H^{m+1}} \right)
% \ee and 
\be \label{iu}
\sum\limits_{i=1}^{2}\left\|\frac{(\psi \circ g) \pa_i g}{g^j}\right\|_{H^{m+1}} \le C\left(\|g\|_{H^{m+2}} + \|g\|_{H^{m+1}}^{m+1} \|g\|_{H^{m+2}} \right)
\ee for some positive constant $C$ that depends on $\eta$, $j$, and $\psi$. We start by writing the $H^{m+1}$ norm of $(\psi \circ g)/g^j$ as the sum of the $L^2$ norm of $(\psi \circ g)/g^j$ and the $H^m$ norm of $\na((\psi \circ g)/g^j)$ which, after applying the Minkowski inequality and the Chain Rule, yields 
\be 
\begin{aligned}
\left\|\frac{\psi \circ g}{g^j} \right\|_{H^{m+1}}
&\le \left\|\frac{\psi \circ g}{g^j} \right\|_{L^2} + \left\| \frac{\na (\psi \circ g)}{g^j} \right\|_{H^{m}} + j\left\| \frac{(\psi \circ g) \na g}{g^{j+1}} \right\|_{H^{m}}
\\&\le \left\|\frac{\psi \circ g}{g^j} \right\|_{L^2} + \left\| \frac{\psi'(g)  \na g}{g^j} \right\|_{H^{m}} + j\left\| \frac{(\psi \circ g) \na g}{g^{j+1}} \right\|_{H^{m}}
\end{aligned}
\ee Since $\psi$ and $\pa_i \psi$ are smooth functions that vanish when $x \le \eta$, we can use the induction hypotheses to deduce that 
\be 
\left\|\frac{\psi \circ g}{g^j} \right\|_{H^{m+1}} \le \left\|\frac{\psi \circ g}{g^j}\right\|_{L^2} + C\left( \|g\|_{H^{m+1}} + \|g\|_{H^m}^{m} \|g\|_{H^{m+1}}\right).
\ee But $\psi$ is a bounded smooth cutoff function on $\TT^2$ so it attains its maximum, and thus 
\be \label{yt}
\left\|\frac{\psi \circ g}{g^j} \right\|_{H^{m+1}} \le  C\left(1 + \|g\|_{H^{m+1}} + \|g\|_{H^m}^{m}\|g\|_{H^{m+1}}\right)
\ee where $C$ depends on $\eta$, $j$, and $\psi$. Now we investigate the behavior of the $H^{m+1}$ norm of $(\psi \circ g) \pa_i g /g^j$. Since $H^{m+1}$ is a Banach algebra, it follows that 
\be 
\left\|\frac{(\psi \circ g) \pa_i g}{g^j} \right\|_{H^{m+1}}
\le \left\|\frac{\psi \circ g}{g^j} \right\|_{H^{m+1}}\|\na g\|_{H^{m+1}},
\ee and thus we obtain 
\be 
\left\|\frac{(\psi \circ g) \pa_i g}{g^j} \right\|_{H^{m+1}}
\le C\left(1+ \|g\|_{H^{m+1}} + \|g\|_{H^m}^{m} \|g\|_{H^{m+1}}\right)\|g\|_{H^{m+2}}
\ee in view of \eqref{yt}. Finally, due to the continuous Sobolev embedding of $H^{k}$ into $H^n$ for any $k \ge n$ and Young's inequality for products, we conclude that the desired inequality \eqref{iu} holds. By induction, we obtain \eqref{prop-logproduct-step1}.

\end{proof}

\beg{lem} \label{Step2}
Let $m \ge 1$ be an integer. For any $\eta > 0$, any bounded smooth cutoff function $\chi(x)$ that vanishes when $x \le \eta$ and such that $\chi'(x) = 0$ when $x \ge 2\eta$, and any function $f \in H^{m}$, it holds that 
\be \label{important}
\left\|(\chi \circ f) \log f\right\|_{H^m}   \le C\left(1 + \|f\|_{H^{m}} + \|f\|_{H^{m-1}}^{m-1} \|f\|_{H^{m}}   \right)
\ee  for some positive constant $C$ that depends on $\eta$, $m$ and $\chi$. 
\end{lem}

\begin{proof}
    We employ an induction argument in $m$. When $m=1$, we expand, apply H\"older's inequality, use the fact that $\chi' (f) = 0$ everywhere except when $\eta \le f \le 2\eta$, and obtain
\be 
\begin{aligned}
\|(\chi \circ f) \log f\|_{H^1}
&\le \|(\chi \circ f) \log f\|_{L^2} + \|\na (\chi \circ f) \log f \|_{L^2} + \left\|\frac{(\chi \circ f) \na f}{f} \right\|_{L^2} 
\\&\le C(1 + \|f\|_{L^2}) + C\|\na f\|_{L^2} 
\le C(1+\|f\|_{H^1})
\end{aligned}
\ee for any $f \in H^1$. 

Next, suppose our desired statement holds for a positive integer $m$. We prove that for any $\eta > 0$, any bounded smooth cutoff function $\psi$ such that $\psi = 0$ when $x \le \eta$ and $\psi' = 0$ when $x \ge 2\eta$, and any function $g \in H^{m+1}$, we have 
\be 
\left\|(\psi \circ g) \log g\right\|_{H^{m+1}} \le C\left(1 + \|g\|_{H^{m+1}} + \|g\|_{H^{m}}^{m} \|g\|_{H^{m+1}} \right)
\ee for some positive constant $C$ that depends on $\eta$ and $\psi$. 
If $m=1$, we have 
\be 
\begin{aligned}
\|(\psi \circ g) \log g\|_{H^{2}}
&\le \|(\psi \circ g) \log g \|_{L^2} + \|\na (\psi \circ g) \log g \|_{H^{1}} + \left\|\frac{(\psi \circ g)\na g}{g}\right\|_{H^{1}} 
\\&\le C(1+\|g\|_{L^2}) + \|\psi' (g) \na g \log g\|_{H^1} + C(\|g\|_{H^1}+1)\|g\|_{H^2}
\end{aligned}
\ee in view of Lemma \ref{Step1} and the vanishing properties of $\psi$ when $x \le \eta$ and of $\psi'$ when $x \ge 2\eta$ . But applications of the Chain Rule, H\"older's inequality, and Ladyzhenskaya's interpolation inequality give rise to 
\be 
\beg{aligned}
\|\psi'(g) \na g \log g\|_{H^1}
&= \|\psi'(g) \na g \log g\|_{L^2}
+ \|\na \left(\psi'(g) \na g \log g \right)\|_{L^2}
\\&\le C\|g\|_{H^1}+ C\|\na g\|_{L^4}^2 + C\|g\|_{H^2} 
\\&\le C\|g\|_{H^2} + C\|g\|_{H^1}\|g\|_{H^2}
\end{aligned}
\ee and consequently
\be 
\begin{aligned}
\|(\psi \circ g) \log g\|_{H^2}
\le C(1+  \|g\|_{H^2} + \|g\|_{H^1} \|g\|_{H^2}),
\end{aligned}
\ee which proves the case $m=1$.
Now suppose that $m \ge 2$. By Minkowski's inequality, we estimate 
\be 
\begin{aligned}
\|(\psi \circ g) \log g\|_{H^{m+1}}
&\le \|(\psi \circ g) \log g \|_{L^2} + \|\na (\psi \circ g) \log g \|_{H^{m}} + \left\|\frac{(\psi \circ g)\na g}{g}\right\|_{H^{m}} 
\end{aligned}
\ee Since $m \ge 2$, the space $H^{m}$ is Banach algebra, and thus
\be 
\|\na (\psi \circ g) \log g\|_{H^{m}} 
= \|\psi'(g)  \na g\log g\|_{H^{m}}
\le C\|\psi'(g) \log g\|_{H^{m}} \|\na g\|_{H^{m}}.
\ee As $\psi'$ is a smooth function that vanishes when $x \le \eta$ and its derivative vanishes when $x \ge 2\eta$, the induction hypothesis can be used to bound 
\be 
\|\psi'(g) \log g\|_{H^{m}}
\le C\left(1 +\|g\|_{H^{m}} + \|g\|_{H^{m-1}}^{m-1}\|g\|_{H^{m}} \right)
\ee and thus 
\be 
\|\na (\psi \circ g) \log g\|_{H^{m}} 
\le C \|g\|_{H^{m+1}} \left(1  + \|g\|_{H^{m}} + \|g\|_{H^{m-1}}^{m-1}\|g\|_{H^{m}} \right).
\ee Consequently, and in view of Lemma \ref{Step1}, we deduce that 
\be 
\begin{aligned}
&\|(\psi \circ g) \log g\|_{H^{m+1}} 
\\&\le C\left(1 + \|g\|_{H^{m+1}} + \|g\|_{H^{m+1}}\|g\|_{H^{m}}^{m} + \|g\|_{H^m}\|g\|_{H^{m+1}} + \|g\|_{H^m}\|g\|_{H^{m+1}}\|g\|_{H^{m-1}}^{m-1} \right)
\\&\le C\left(1  + \|g\|_{H^{m+1}} + \|g\|_{H^m}^{m}\|g\|_{H^{m+1}} \right)
\end{aligned}
\ee after applications of Young's inequality. 
\end{proof}

With Lemma~\ref{Step1} and Lemma~\ref{Step2}, we further establish the following propositions.

\begin{prop}\label{prop:log}
Let $m \ge 1$ be an integer. For any $\eta > 0$, let $\chi(x)$ be a bounded smooth cutoff function such that $\chi(x) = 0$ when $x \le \eta$ and $\chi(x) = 1$ when $x \ge 2\eta$. For any function $f \in H^{m} \cap L^{\infty}$, it holds that 
\be 
\begin{aligned}
\left\|(\chi \circ f) f \log f\right\|_{H^1}   
\le C(\|f\|_{H^1} + \|f\|_{L^{\infty}}\|f\|_{H^1}),
\end{aligned}
\ee  and for $m\geq 2$,
\be 
\begin{aligned}
\left\|(\chi \circ f) f \log f\right\|_{H^m}   
\le C\left(\|f\|_{H^m} + \|f\|_{H^m}^2 + \|f\|_{H^{m-1}}^{m-1}\|f\|_{H^m}^2\right).
\end{aligned}
\ee Here $C$ is a positive constant that depends on $\eta$ and $\chi$. 
\end{prop}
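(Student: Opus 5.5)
We prove the two inequalities separately. The $H^1$ bound is a direct computation. The $H^m$ bound reduces to the Banach algebra property of $H^m$ (for $m\ge 2$ in two dimensions) combined with Lemma \ref{Step2}.

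\textbf{The case $m=1$.} We split
\[
\|(\chi\circ f) f\log f\|_{H^1}\le \|(\chi\circ f) f\log f\|_{L^2}+\|\nabla\big((\chi\circ f) f\log f\big)\|_{L^2}.
\]
On the support of $\chi\circ f$ we have $f\ge\eta$. There $|\log f|\le C_\eta+\log_+ f\le C_\eta+f$, so $|f\log f|\le C(f+f^2)$. Hence the $L^2$ part is at most $C(\|f\|_{L^2}+\|f\|_{L^\infty}\|f\|_{L^2})$.

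By the chain rule, the gradient is
\[
\chi'(f)\nabla f\, f\log f+\chi(f)\nabla f\,(\log f+1).
\]
The first term is supported where $\eta\le f\le 2\eta$, so $f\log f$ is bounded there and the term is $\le C\|\nabla f\|_{L^2}$. In the second term, $|\log f+1|\le C+|f|\le C(1+\|f\|_{L^\infty})$ on the support. This gives $C(1+\|f\|_{L^\infty})\|\nabla f\|_{L^2}$ and proves the first inequality.

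\textbf{The case $m\ge 2$.} Here $H^m(\TT^2)$ is a Banach algebra, so
\[
\|(\chi\circ f)f\log f\|_{H^m}\le C\|f\|_{H^m}\,\|(\chi\circ f)\log f\|_{H^m}.
\]
We would like to apply Lemma \ref{Step2}, but it requires $\chi'(x)=0$ for $x\ge 2\eta$. Our $\chi$ equals $1$ for $x\ge 2\eta$, so $\chi'$ vanishes there and $\chi$ also vanishes for $x\le\eta$; the hypotheses hold. Lemma \ref{Step2} then gives
\[
\|(\chi\circ f)\log f\|_{H^m}\le C\big(1+\|f\|_{H^m}+\|f\|_{H^{m-1}}^{m-1}\|f\|_{H^m}\big).
\]
Multiplying by $\|f\|_{H^m}$ yields exactly $C(\|f\|_{H^m}+\|f\|_{H^m}^2+\|f\|_{H^{m-1}}^{m-1}\|f\|_{H^m}^2)$.

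\textbf{Main obstacle.} The one point to check is that $\chi\circ f$ and the product are well defined as functions in the relevant spaces when $f$ may become nonpositive. This holds because of the cutoff extension convention: $\log f$ is only ever evaluated where $f\ge\eta$. The $L^\infty$ hypothesis is needed only for the $m=1$ case, where no algebra property is available.
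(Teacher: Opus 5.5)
Your proof is correct and follows the paper's own argument. For $m=1$ you expand the gradient by the chain rule and use both $f\ge\eta$ on the support of $\chi\circ f$ and the localization of $\chi'$ to $[\eta,2\eta]$. For $m\ge 2$ you combine the Banach algebra property of $H^m(\TT^2)$ with Lemma~\ref{Step2}, exactly as the paper does, and your bound $C\|\nabla f\|_{L^2}$ for the $\chi'$ term is even slightly sharper than the paper's $C\|\nabla f\|_{L^2}\|f\|_{L^\infty}$.
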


\begin{proof} 
 For any function $f \in H^{1}\cap L^\infty$, it holds that 
\be
\begin{aligned}
&\|(\chi \circ f) f \log f\|_{H^1}
\\=& \|(\chi \circ f) f \log f\|_{L^2} + \|\na((\chi \circ f) f \log f)\|_{L^2}
\\=& \|(\chi \circ f) f \log f\|_{L^2} + \|(\chi \circ f) \na f \log f\|_{L^2} + \|(\chi \circ f) \na f \|_{L^2} + \|\chi' (f) \na f  f \log f \|_{L^2}
\\\le & C\|f\|_{L^2} (1+ \|f\|_{L^{\infty}}) + C\|\na f\|_{L^2} (1+\|f\|_{L^{\infty}}) + C\|\na f\|_{L^2} + C\|\na f\|_{L^2}\|f\|_{L^{\infty}}
\\\le & C\|f\|_{H^1} + C\|f\|_{H^1}\|f\|_{L^{\infty}},
\end{aligned}
\ee due to the vanishing of $\chi'$ when $f$ does not take values in $[\eta, 2\eta]$. For $m \ge 2$ and $f\in H^{m}\cap L^\infty$, we have 
\be 
\begin{aligned}
\left\|(\chi \circ f) f \log f\right\|_{H^m}   
&\le C\|f\|_{H^m} \|(\chi \circ f) \log f\|_{H^m}
\\&\le C\left( \|f\|_{H^m} + \|f\|_{H^m}^2 + \|f\|_{H^{m-1}}^{m-1}\|f\|_{H^m}^2\right).
\end{aligned}
\ee Here $C$ is a positive constant that depends on $\eta$ and $\chi$. The latter follows directly from the fact that $H^m$ is a Banach algebra and from Lemma \ref{Step2}. 
\end{proof}

\begin{prop} \la{prop1}
Let $m \ge 1$ be an integer. Let $f\in H^m$ be a nonnegative real-valued function. For each $\eta > 0$, let $\chi$ be a smooth cutoff function such that $\chi(x) = 0$ when $x < \eta$ and $\chi(x) = 1$ when $x >2\eta$. Then it holds that 
\be \label{logprod}
\|\chi(f) \log f\|_{H^m} 
\le C (1+ \|f\|_{L^1}^m + \|\l^m f\|_{L^2}^m)
\ee for some positive constant $C$ depending only on $\eta$, $m$, $\chi$, and some universal constants. 
\end{prop}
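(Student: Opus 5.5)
The plan is to view $\chi(f)\log f$ as a composition $g\circ f$ with $g(x)=\chi(x)\log x$ for $x>0$ and $g(x)=0$ for $x\le 0$. Then I estimate the $L^2$ part and the top-order part $\|\l^m (g\circ f)\|_{L^2}$ separately, using $\|h\|_{H^m}\le C(\|h\|_{L^2}+\|\l^m h\|_{L^2})$. The basic observation is that $g$ is smooth on $\R$, and every derivative $g^{(k)}$ with $k\ge1$ is bounded, with a bound depending only on $\eta$, $k$ and $\chi$:
\begin{itemize}
\item below $\eta$, $g\equiv 0$;
\item on $[\eta,2\eta]$, $g$ is a smooth function on a compact interval;
\item for $x\ge 2\eta$, $g^{(k)}(x)=(-1)^{k-1}(k-1)!\,x^{-k}$, which is bounded by $(k-1)!\,(2\eta)^{-k}$.
\end{itemize}
Unlike Lemma \ref{Step2}, no induction is needed, and the derivatives are controlled directly by Faà di Bruno. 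I would first work with smooth $f$ and then pass to general $f\in H^m$ by density. This is legitimate because $g$ is globally Lipschitz and has bounded derivatives, so $f\mapsto g\circ f$ is continuous in the relevant topologies.

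\emph{Step 1 ($L^2$ part).} For $x\ge\eta$ we have $|\log x|\le C_\eta(1+x^{1/2})$, and $|\chi|$ is bounded. Hence $|g(f)|^2\le C(1+f)$ pointwise, using $f\ge0$. Therefore $\|g(f)\|_{L^2}\le C(1+\|f\|_{L^1}^{1/2})\le C(1+\|f\|_{L^1}^m)$. This is where the $L^1$ norm in \eqref{logprod} enters.

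\emph{Step 2 (top-order part).} Since $\TT^2$ has unit volume, $\|\l^m h\|_{L^2}$ is equivalent to $\sum_{|\alpha|=m}\|\pa^\alpha h\|_{L^2}$. By the Faà di Bruno formula,
\[
\pa^\alpha g(f)=\sum_{k=1}^{m}\sum g^{(k)}(f)\prod_{l=1}^k \pa^{\beta_l} f ,
\]
where the inner sum runs over decompositions with $|\beta_l|\ge1$ and $\sum_l|\beta_l|=m$. Since $g^{(k)}$ is bounded, Hölder's inequality with exponents $p_l=2m/|\beta_l|$ (so that $\sum_l 1/p_l=1/2$) bounds each term by $C\prod_l\|\pa^{\beta_l}f\|_{L^{2m/|\beta_l|}}$. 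Each factor involves only derivatives, so $f$ may be replaced by $\tilde f=f-\bar f$. If $|\beta_l|<m$, then $\tilde f\in H^{m}$ gives $\pa^{\beta_l}\tilde f\in H^{m-|\beta_l|}\subset H^1\subset L^p$ for every $p<\infty$ in two dimensions. If $|\beta_l|=m$, then $k=1$ and the factor is simply $\|\pa^{\alpha} f\|_{L^2}$. Either way, $\|\pa^{\beta_l} f\|_{L^{2m/|\beta_l|}}\le C\|\tilde f\|_{H^m}\le C\|\l^m f\|_{L^2}$ by Poincaré's inequality. Consequently
\[
\|\l^m g(f)\|_{L^2}\le C\sum_{k=1}^m\|\l^m f\|_{L^2}^k\le C\bigl(1+\|\l^m f\|_{L^2}^m\bigr).
\]
Combining this with Step 1 gives \eqref{logprod}.

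The main point to check carefully is the uniform boundedness of all $g^{(k)}$, $k\ge1$, near the cutoff region; this is exactly where the assumption $\chi=1$ for $x>2\eta$ is used. One also has to control the Sobolev embedding constants, which depend only on $m$, and the density argument. For that I would approximate $f$ by nonnegative smooth functions, for instance mollifications, which stay nonnegative, and use the bounds on $g'$ to pass to the limit.
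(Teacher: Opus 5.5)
Your proof is correct, and it follows a different route from the paper. The paper gets Proposition \ref{prop1} as a corollary of Lemma \ref{Step2}. That lemma is proved by induction on $m$: Lemma \ref{Step1} controls the terms $(\chi\circ f)\pa_i f/f^j$, and the algebra property of $H^m$ ($m\ge 2$) controls products. The result is the intermediate bound $\|(\chi\circ f)\log f\|_{H^m}\le C(1+\|f\|_{H^m}+\|f\|_{H^{m-1}}^{m-1}\|f\|_{H^m})$, which the paper then converts into $\|f\|_{L^1}$ and $\|\Lambda^m f\|_{L^2}$ using $|\bar f|\le\|f\|_{L^1}$, Poincar\'e, and Young.

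You instead note once that $g=\chi\log$, extended by zero, is smooth on $\R$ with $g^{(k)}$ bounded for every $k\ge 1$. You then run a single Moser-type composition estimate: Fa\`a di Bruno, H\"older with exponents $2m/|\beta_l|$, and the embeddings $H^{m-|\beta_l|}\subset L^p$ applied to $f-\bar f$. This removes the induction and Lemma \ref{Step1} entirely. It also gives a slightly sharper statement: the top-order part is controlled by $\sum_{k=1}^m\|\Lambda^m f\|_{L^2}^k$ alone, and $\|f\|_{L^1}$ enters only through the $L^2$ part, with power $1/2$.

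What the paper's route buys is reuse. The full-norm bound \eqref{important} is combined with the algebra property in Proposition \ref{prop:log} to handle $(\chi\circ f)f\log f$. There the outer function $x\log x$ has an unbounded first derivative, so your ``all derivatives of order $\ge 1$ bounded'' observation would not carry over verbatim.

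One small point on your density step: you do not need continuity of $f\mapsto g\circ f$ in $H^m$. It is enough that:
\begin{itemize}
\item $g(f_\epsilon)\to g(f)$ in $L^2$, by the Lipschitz bound on $g$;
\item $\|g(f_\epsilon)\|_{H^m}$ is uniformly bounded by your estimate, since mollification does not increase $\|f\|_{L^1}$ or $\|\Lambda^m f\|_{L^2}$;
\item the $H^m$ norm is weakly lower semicontinuous.
\end{itemize}
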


\begin{proof} The proof follows directly from \eqref{important} and applications of classical continuous Sobolev embeddings, Poincar\'e's inequality, and Young's inequality for products.  
\end{proof}

\begin{prop}\label{prop:est-logterm}
Let $m \ge 2$ be an integer. Let $g$ be a smooth function. Let $f_n\in H^m$ be a sequence of nonnegative real-valued functions. For each $\eta > 0$, let $\chi$ be a smooth cutoff function such that $\chi(x) = 0$ when $x < \eta$ and $\chi(x) = 1$ when $x >2\eta$. Then it holds that 
\be \label{logprod-2}
\begin{aligned}
&\|\l^m(g \chi(f_n) f_{n+1} \log f_n)\|_{L^2} 
\\&\quad\le  C \left(\|g\|_{L^{\infty}} + \|\l^m  g\|_{L^2 }\right)\left(1 + \|f_n\|^m_{L^1} + \|\l^m f_n\|_{L^2}^m \right) \left(\|f_{n+1}\|_{L^1} + \|\l^m f_{n+1}\|_{L^2} \right)
\end{aligned}
\ee for some positive constant $C$ depending only on $\eta$, $m$, $\chi$, and some universal constants. 
\end{prop}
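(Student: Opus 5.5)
We reduce the estimate to a Banach-algebra product bound in $H^m$, followed by Proposition~\ref{prop1} for the logarithmic factor. Since $m\ge 2$ and we work on $\TT^2$, $H^m$ embeds in $L^\infty$ and is a Banach algebra. We will use the refined Kato--Ponce form of the product rule,
\[
\|\l^m(FG)\|_{L^2}\le C\left(\|F\|_{L^\infty}\|\l^m G\|_{L^2}+\|\l^m F\|_{L^2}\|G\|_{L^\infty}\right).
\]
Applying it twice to the product $g\cdot\chi(f_n)\log f_n\cdot f_{n+1}$, and absorbing $L^\infty$ norms of the last two factors into $H^m$ norms by Sobolev embedding, we aim for
\[
\|\l^m(g\,\chi(f_n)f_{n+1}\log f_n)\|_{L^2}\le C\left(\|g\|_{L^\infty}+\|\l^m g\|_{L^2}\right)\|\chi(f_n)\log f_n\|_{H^m}\|f_{n+1}\|_{H^m}.
\]
The factor involving $g$ keeps its $L^\infty$ norm unchanged, since the Kato--Ponce splitting places either $\|g\|_{L^\infty}$ or $\|\l^m g\|_{L^2}$ on it. The remaining product $\chi(f_n)\log f_n\cdot f_{n+1}$ is controlled through the algebra property: $\|FG\|_{L^\infty}\le\|F\|_{L^\infty}\|G\|_{L^\infty}$ and $\|\l^m(FG)\|_{L^2}\le C\|F\|_{H^m}\|G\|_{H^m}$.

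Next we treat each factor separately. For the logarithmic factor, Proposition~\ref{prop1} applies directly, because $f_n\ge 0$, $f_n\in H^m$, and $\chi$ has the required cutoff properties. It gives $\|\chi(f_n)\log f_n\|_{H^m}\le C(1+\|f_n\|_{L^1}^m+\|\l^m f_n\|_{L^2}^m)$, with $C$ depending on $\eta,m,\chi$. For $f_{n+1}$, we bound the full $H^m$ norm on the unit torus as follows: the mean is controlled by $\|f_{n+1}\|_{L^1}$, and the mean-free part by $\|\l^m f_{n+1}\|_{L^2}$ via Poincar\'e's inequality. Hence $\|f_{n+1}\|_{H^m}\le C(\|f_{n+1}\|_{L^1}+\|\l^m f_{n+1}\|_{L^2})$. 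Multiplying the three bounds yields \eqref{logprod-2}.

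The main obstacle is making sure that no $H^m$ norm of $g$ beyond $\|\l^m g\|_{L^2}$, and no $\|g\|_{L^2}$ term, sneaks in. This is the reason for using the commutator-type Kato--Ponce estimate, which only needs $\|g\|_{L^\infty}$ and the homogeneous seminorm $\|\l^m g\|_{L^2}$, rather than the inhomogeneous algebra inequality. Lower-order terms $\|g\|_{L^2}\le\|g\|_{L^\infty}$ on the unit torus are harmless in any case. One should also check that the cutoff convention makes $\chi(f_n)\log f_n$ a genuine $H^m$ function where $f_n$ vanishes. This holds because $\chi(f_n)=0$ on the set $\{f_n<\eta\}$, so Proposition~\ref{prop1} is applicable without any positivity assumption on $f_n$ beyond nonnegativity.
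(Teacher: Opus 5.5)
Your proof is correct and follows essentially the paper's route: a Kato--Ponce product estimate, then Proposition~\ref{prop1} for the logarithmic factor, then the mean/Poincar\'e bound $\|f\|_{H^m}\le C(\|f\|_{L^1}+\|\Lambda^m f\|_{L^2})$ together with Sobolev embedding.

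The only difference is cosmetic. The paper splits the product as $(\chi(f_n)\log f_n)\cdot(g f_{n+1})$ and bounds $\|\chi(f_n)\log f_n\|_{L^\infty}$ directly by $|\log\eta|+\|f_n\|_{L^\infty}$. You instead split off $g$ first and control both the $L^\infty$ and $\Lambda^m$ norms of the logarithmic factor through its full $H^m$ norm, which gives the same final bound.
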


\begin{proof} Using classical periodic product estimates, we have 
\be 
\begin{aligned}
 &\|\l^m(g \chi(f_n) f_{n+1} \log f_n)\|_{L^2}
 \\&\quad\quad\le C\|\l^m (\chi(f_n) \log f_n)\|_{L^2} \|f_{n+1}\|_{L^{\infty}}\|g\|_{L^{\infty}}
 + C\|\chi(f_n) \log f_n\|_{L^{\infty}} \|\l^m(gf_{n+1})\|_{L^2}
 \\&\quad\quad\le C\|\l^m (\chi(f_n) \log f_n)\|_{L^2} \|f_{n+1}\|_{L^{\infty}}\|g\|_{L^{\infty}}
 \\&\quad\quad\quad\quad+ C(|\log \eta|+\|f_n\|_{L^\infty}) \left(\|\l^mf_{n+1}\|_{L^2}\|g\|_{L^{\infty}} + \|\l^m g\|_{L^2}\|f_{n+1}\|_{L^{\infty}} \right),
 \end{aligned}
 \ee 
 which, after interpolation, gives
 \be 
 \begin{aligned}
&\|\l^m(g \chi(f_n) f_{n+1} \log f_n)\|_{L^2}
\\&\quad\quad\le C\|\l^m (\chi(f_n) \log f_n)\|_{L^2} \left(\|f_{n+1}\|_{L^1} + \|\l^m f_{n+1}\|_{L^2} \right) \|g\|_{L^{\infty}}
\\&\quad\quad\quad+ C(|\log \eta|+\|f_n\|_{L^1} + \|\l^m f_n\|_{L^2}) \|g\|_{L^{\infty}} \|\l^m f_{n+1}\|_{L^2} 
\\
&\quad\quad\quad
+ C(|\log \eta|+\|f_n\|_{L^1} + \|\l^m f_n\|_{L^2})  \|\l^m g\|_{L^2} \left(\|f_{n+1}\|_{L^1} + \|\l^m f_{n+1}\|_{L^2} \right).
 \end{aligned}
 \ee An application of Proposition \ref{prop1} yields the desired estimate \eqref{logprod-2}. 
\end{proof}

\section{Positive Lower Boundedness of Solutions to Drift-Diffusion Equations with Logarithmic Nonlinearities} \label{Sec3}

In this section, we establish a general criterion that guarantees a uniform positive lower bound for solutions to certain drift-diffusion equations with logarithmic nonlinearities. More precisely, let $v$, $\mathcal{F}$, and $\mathcal{G}$ be vector fields defined on $\TT^2$, with $\nabla \cdot v = 0$, let $p$ be a smooth periodic scalar function, and let $\chi$ be a nonnegative smooth scalar cutoff function satisfying $\chi(x)=0$ for $x \le \eta$ and $\chi(x)=1$ for $x \ge 2\eta$, where $\eta>0$ is fixed. We ask: what regularity assumptions on $v$, $\mathcal{F}$, and $\mathcal{G}$ ensure that smooth solutions to equations of the form
\be \la{eqeq}
\pa_t q +  v \cdot \na q - \mu \Delta q
=  \na \cdot \left( C_1 q \mathcal{F}
+ (C_2 \chi(q)  \log q + C_3 \chi(p)\log p)q\mathcal{G}   \right)
\ee
maintain a strictly positive spatial infimum? Here, $\mu, C_1, C_2, C_3$ are prescribed constants with $\mu > 0$.

\beg{prop} (Criteria for nonnegativity) \label{nonneg} Let $\mathcal{T} > 0$ be arbitrary, and let $\eta>0$ be a fixed constant. %Suppose $\chi(x) = 0$ when $x \le \eta$ and $\chi(x)=1$ when $x\geq 2\eta$ for some positive constant $\eta$. 
Let $q$ be a smooth solution to \eqref{eqeq} on $[0, \mathcal{T}]$. Suppose the regularity criterion 
\be \label{nonneg-condition}
\int_{0}^{\mathcal{T}} C_1^2\|\mathcal{F}\|_{L^{\infty}}^2 + C_3^2(|\log \eta|^2 
+ \|p\|_{L^{\infty}}^2) \|\mathcal{G}\|_{L^{\infty}}^2 dt < \infty 
\ee holds. If $q_0 \ge 0$, then $q(x,t) \ge 0$ for every $t \in [0, \mathcal T]$ and a.e. $x \in \TT^2$.   

\end{prop}
\begin{proof} Denote the  negative part of $q$ by $q^-$, i.e., $q^- =\max\{-q,0\}$. Multiply \eqref{eqeq} by $-q^-$ and integrate over $\TT^2$. 
As $v$ is divergence-free, integrating by parts yields
\be 
\int_{\TT^2}v \cdot \na q q^- dx = -\int_{\TT^2}v \cdot \na q^- q^- dx = 0.
\ee 
In addition, using the following identities
\be 
\int_{\TT^2}\Delta q q^- dx = -\int_{\TT^2}\na q \na q^- dx = \int_{\TT^2}\na q^- \na q^- dx, 
\ee 
\be 
\chi(q) q^{-} = 0,
\ee we obtain the energy equality 
\be 
\frac{1}{2} \fr{d}{dt}\|q^-\|_{L^2}^2 + \mu \|\na q^-\|_{L^2}^2
= - C_1 \int_{\TT^2} q^- \mathcal{F} \cdot \na q^- dx- C_3\int_{\TT^2}\chi(p) q^- \log p \mathcal{G} \cdot \na q^- dx.
\ee 
Applications of the Cauchy-Schwarz and Young inequalities give rise to the differential inequality 
\be 
\frac{d}{dt} \|q^-\|_{L^2}^2
+ \mu \|\na q^-\|_{L^2}^2
\le C\left( C_1^2\|\mathcal{F}\|_{L^{\infty}}^2 +  C_3^2\|\mathcal{G}\|_{L^{\infty}}^2 \|\chi(p) \log p\|_{L^{\infty}}^2 \right)\|q^-\|_{L^2}^2.
\ee Since $\chi(p)$ vanishes when $p \le \eta$, the logarithmic term can be controlled by 
\be 
\|\chi(p) \log p\|_{L^{\infty}} 
\le C(|\log \eta| + \|p\|_{L^{\infty}}),
\ee and thus we deduce that 
\be 
\|q^-(t)\|_{L^2}^2 
\le \|q^-(0)\|_{L^2}^2 \exp \left\{ C\int_{0}^{t} \left(C_1^2\|\mathcal{F}\|_{L^{\infty}}^2 + C_3^2(|\log \eta|^2 + \|p\|_{L^{\infty}}^2) \|\mathcal{G}\|_{L^{\infty}}^2\right) ds \right\}
\ee as a consequence of Gr\"onwall's inequality. But the initial data $q_0$ is nonnegative, which forces its negative part to vanish. Therefore, $q^-(t) = 0$ in $L^2$ and so $q^- = 0$ a.e. on $\TT^2$. As a conclusion, we have $q(t) \ge 0$ for all $t \in [0,\mathcal{T}]$ and a.e. $x \in \TT^2$. 
\end{proof}

\begin{prop}\label{prop:pos-lower-bdd}
(Criteria for positive lower boundedness) 
 Let $\mathcal{T} > 0$ be arbitrary, and let $\eta>0$ be a fixed constant. %Suppose $\chi(x) = 0$ when $x \le \eta$ and $\chi(x)=1$ when $x\geq 2\eta$ for some positive constant $\eta$. 
 Let $q$ be a nonnegative smooth solution to \eqref{eqeq} on $[0, \mathcal{T}]$. Suppose the regularity criterion 
\be \label{positive-condition}
\mathcal{O}= \int_{0}^{\mathcal{T}} \Big[|C_1| \|\na \cdot \mathcal{F}\|_{L^{\infty}} + |C_2| \|\na \cdot \mathcal{G}\|_{L^{\infty}} + |C_3| \left(\|\chi(p) \log p\|_{L^{\infty}} + \|\na (\chi(p) \log p) \|_{L^{\infty}} \right)\|\mathcal{G}\|_{W^{1,\infty}} \Big] dt < \infty 
\ee 
holds. If $q_0(x) \ge a_0 >0$ on $\TT^2$, then $$q(x,t) \ge \min \left\{\frac{a_0}{2}, \exp \left\{1-\left(1+ \log \frac{2}{a_0}\right)\exp{\mathcal{O}}  \right\} \right\}$$ for every $(x,t) \in \TT^2\times[0, \mathcal T]$.
\end{prop}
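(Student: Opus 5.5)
We will argue through the evolution of the spatial minimum $m(t)=\min_{x\in\TT^2} q(x,t)$, together with a logarithmic change of variables. The target bound has the form
\[
1+\log\frac{1}{m(t)} \le \Big(1+\log\frac{2}{a_0}\Big)e^{\mathcal O}
\]
whenever $m(t)<a_0/2$. This suggests showing that $W(t)=1-\log m(t)$ obeys a linear differential inequality $W'\le \mathcal{O}'(t)\,W$ on the time intervals where $m$ is small, and then applying Gr\"onwall's inequality.

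\textbf{Step 1 (pointwise inequality at the minimum).} Since $q$ is smooth on the compact set $\TT^2\times[0,\mathcal T]$, the function $m$ is Lipschitz. Its lower Dini derivative at time $t$ is bounded below by $\partial_t q(x_t,t)$, where $x_t$ is any point at which the minimum is attained; this is the standard Danskin/Rademacher argument. We expand the right-hand side of \eqref{eqeq} by the product rule. At $x_t$ we have $\na q=0$ and $\Delta q\ge 0$, and the transport term $v\cdot\na q$ vanishes. Every term containing $\na q$ therefore drops out, including $\na(\chi(q)\log q)=(\chi'(q)\log q+\chi(q)/q)\na q$. What remains is
\[
\partial_t q \ge C_1 q\,\na\cdot\mathcal F + C_2\chi(q)\, q\log q\,\na\cdot\mathcal G + C_3 q\,\na(\chi(p)\log p)\cdot\mathcal G + C_3\chi(p)\log p\, q\,\na\cdot\mathcal G .
\]
We bound each term below using $0\le\chi\le 1$ (normalizing $\chi$ to be bounded by $1$, or absorbing $\|\chi\|_{L^\infty}$ into the constants). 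This gives, in the Dini sense,
\[
m'(t)\ \ge\ -m(t)\big(A(t)+B(t)\,|\log m(t)|\big),
\]
where $A(t)=|C_1|\|\na\cdot\mathcal F\|_{L^\infty}+|C_3|\big(\|\chi(p)\log p\|_{L^\infty}+\|\na(\chi(p)\log p)\|_{L^\infty}\big)\|\mathcal G\|_{W^{1,\infty}}$ and $B(t)=|C_2|\|\na\cdot\mathcal G\|_{L^\infty}$. By \eqref{positive-condition}, $\int_0^{\mathcal T}(A+B)\,dt=\mathcal O$. The factor $q$ multiplying every surviving term is exactly what allows the estimate to be closed for $\log m$.

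\textbf{Step 2 (logarithmic Gr\"onwall).} Fix $t$ with $m(t)<a_0/2$. Let $t_0<t$ be the last time before $t$ at which $m(t_0)=a_0/2$; such a time exists by continuity, since $m(0)\ge a_0$. On $[t_0,t]$ we have $m<a_0/2$. Where in addition $m\le 1$, we get $|\log m|=-\log m=W-1\le W$. Hence $W'\le A+B(W-1)\le (A+B)W$, using $W\ge 1$. If $a_0/2>1$, so that $m$ may lie in $(1,a_0/2)$, we instead restrict to the sub-interval on which $m\le 1$ and restart from the last time $m=1$. Starting from either $W=1-\log(a_0/2)$ or $W=1$, the starting value is at most $1+\log(2/a_0)$ whenever $a_0\le 2$, and the case $a_0>2$ is absorbed by the minimum in the statement. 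Gr\"onwall then gives $W(t)\le(1+\log\frac{2}{a_0})e^{\int_{t_0}^t(A+B)}\le (1+\log\frac2{a_0})e^{\mathcal O}$. Exponentiating yields the stated lower bound.

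\textbf{Main obstacle.} I expect the main obstacle to be the rigorous handling of the nonsmooth function $m(t)$ and of the case bookkeeping, more than the algebra. This means justifying the Dini-derivative inequality when the minimizer is not unique, and treating carefully the thresholds $\eta$, $1$ and $a_0/2$ (in particular the sign of $\log m$ and the normalization $W\ge 1$). The algebraic point that must be checked carefully is that the $C_2$ term contributes $q\log q$ rather than $\log q$ alone, since this is what makes the inequality linear in $W$.
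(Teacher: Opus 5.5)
Your argument is the same as the paper's. At a spatial minimizer you use $\nabla q=0$ and $\Delta q\ge 0$ to get $m'\ge -m(A+B|\log m|)$, where your $A,B$ are the paper's $F,G$. Then, on the interval starting at the last time $m=a_0/2$, you integrate $\frac{d}{dt}\log(1-\log m)\le A+B$. Your Danskin/Dini justification replaces the paper's citation for $m'(t)=\partial_t q(x_t,t)$.

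The one claim that is wrong is that the case $a_0>2$ is ``absorbed by the minimum in the statement.'' Restarting from $W=1$ only yields $m\ge \exp\{1-e^{\mathcal O}\}$. Once $a_0>2$ this is strictly below the stated bound, because then $1+\log\frac{2}{a_0}<1$.

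No argument can close this case, because the statement is false for large $a_0$. For $a_0\ge 2e$ the right-hand side of the statement equals $\frac{a_0}{2}$. Now take $C_2=C_3=0$, so the equation is linear in $q$. Take also $v=0$, $C_1=1$, and $\mathcal F=\nabla\Phi$, where $\Phi$ has a tall bump. The solution from $q_0\equiv 1$ relaxes to $Ze^{-\Phi}$, so it dips below $\frac12$ at some finite time. By linearity, the solution from $q_0\equiv a_0$ then dips below $\frac{a_0}{2}$, with $\mathcal O=\mathcal T\|\Delta\Phi\|_{L^\infty}$ finite.

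The paper avoids this by replacing $a_0$ with $\min\{a_0,1\}$ at the start. What is actually proved there is the bound for $a_0\le 1$; your version gives it verbatim for $a_0\le 2$. You should state the restriction $a_0\le 2$ explicitly instead of asserting that the large-$a_0$ case comes for free.
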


\begin{proof}  We denote the minimum of $q_0$ on $\TT^2$ by $m_0$. For each $t \in [0, \mathcal{T}]$, let 
\be 
m(t) = \inf_{x \in \TT^2} q(x,t).
\ee For each $t \in [0, \mathcal{T}]$, there exists $x_t \in \TT^2$ such that 
\be
m(t) = q (x_t, t) \quad \text{and} \quad m'(t) = \partial_t q(x_t,t)
\ee 
hold simultaneously due to the smoothness properties satisfied by $q$ (see Appendix B in \cite{constantin2015long}). 
At $(x_t, t)$, we have 
\be 
\na q (x_t, t) = 0, \quad \Delta q (x_t, t) \ge 0,
\ee from which we infer that 
\be 
\nabla [\chi(q) \log q](x_t,t)=0.
\ee 
Consequently, $m(t)$ obeys the differential inequality 
\be 
\begin{aligned}
m'(t) 
&\ge C_1 m(t) \na \cdot \mathcal{F}(x_t, t) + (C_2 m(t)\chi(m(t))  \log m(t) + C_3m(t) \chi(p(x_t, t)) \log p(x_t, t))\na \cdot \mathcal{G}(x_t, t) 
\\&\quad\quad\quad\quad+ C_3 m(t) \mathcal{G}(x_t,t) \cdot \nabla\left(\chi(p) \log(p) \right)(x_t,t).
\end{aligned}
\ee 
Denote the instantaneous supremum of $|\chi(p)\log p|$  and $|\na (\chi(p) \log p)|$ over $\TT^2$ by $K(t)$. Using the upper boundedness of $\chi$ by $1$, we infer that 
\be \la{ty}
m'(t) + \left(|C_1| \|\na \cdot \mathcal{F}\|_{L^{\infty}} + |C_3| K \|\mathcal{G}\|_{W^{1,\infty}}\right) m(t) \ge - |C_2| \|\na \cdot \mathcal{G}\|_{L^{\infty}} m(t) |\log m(t)|
\ee Denote by $F(t)$ and $G(t)$ the following quantities:
\be 
F(t) = |C_1| \|\na \cdot \mathcal{F}\|_{L^{\infty}} + |C_3| K \|\mathcal{G}\|_{W^{1,\infty}},
\ee
\be 
G(t) = |C_2| \|\na \cdot \mathcal{G}\|_{L^{\infty}}, 
\ee in which case \eqref{ty} yields  
\be \la{tr}
m'(t) \ge -m(t) \left(1 +  |\log m(t)| \right) (F(t) +G(t))
\ee due to the nonnegativity of both $F$ and $G$.  Since $m_0 \ge a_0 >0$ and $m$ is continuous in time, we deduce the existence of a time $t_0$ such that $m(t) \ge \frac{a_0}{2}$ for all $t \in [0,t_0]$. Since $a_0$ is only a positive lower bound for $m_0$, replacing it by $\min\{a_0,1\}$ if necessary, we may
assume without loss of generality that $0<a_0\leq 1$. Next, we introduce the set of times 
\be 
\mathcal{S} = \left\{t \in [0, \mathcal{T}]: m(t) = a_0/2\right\}.
\ee  If $\mathcal S$ is empty, i.e., $m(t) > \frac{a_0}{2}$ for all times $t \in [0, \mathcal{T}]$, then we are done. Otherwise, there is a time $\tau \in [0, \mathcal{T}]$ such that $m(\tau) = a_0/2$.  Let $t \in (0, \mathcal{T}]$. If $m(t) \ge \frac{a_0}{2}$, we disregard this time. If $m(t) < \frac{a_0}{2}$, then by continuity $m(t) \le \frac{a_0}{2}$ on $[t_{\beta}, t]$  for some $t_{\beta} \in \mathcal{S}$. In the latter situation, it holds that $|\log m(t)| = - \log m(t)$ for any $t \in [t_{\beta}, t]$, and due to \eqref{tr}, we obtain
\be 
m'(t) \ge - m(t) (1- \log m(t)) (F(t) + G(t)).
\ee Dividing both sides by $m(t) (1 - \log m(t))$ and integrating in time from $t_{\beta}$ to $t$, we deduce that 
\be 
\log (1 - \log m(t))\le \log (1+ \log \frac{2}{a_0}) + \int_{0}^{\mathcal{T}} (F(s) + G(s)) ds. 
\ee Exponentiating twice gives rise to the lower bound 
\be 
m(t) \ge \exp \left\{1-\left(1+ \log \frac{2}{a_0}\right)e^{\int_{0}^{T}(F(s) + G(s)) ds}  \right\}.
\ee Therefore, $m(t)$ is bounded from below and obeys 
\be 
m(t) \ge \min \left\{\frac{a_0}{2}, \exp \left\{1-\left(1+ \log \frac{2}{a_0}\right)e^{\int_{0}^{T}(F(s) + G(s)) ds}  \right\} \right\}
\ee for any $t \in [0, \mathcal{T}]$. 
\end{proof}

\section{Mollified system} \label{sec4}

The local well-posedness of the non-isothermal NPNS is a challenging problem due to the nonlinear and nonlocal aspects of the model, in addition to the logarithmic thermodiffusive effects. In order to address the latter, we need to construct regularized systems, guaranteed to have global smooth solutions that converge, locally and uniformly in time, to a solution of the original  model. 

To this end, we consider a family of standard periodic mollifiers $\left\{\phi_{\eta}  \right\}_{\eta \in (0,1)}$ with $\int_{\TT^2} \phi_{\eta}(x) dx = 1$ for all $\eta \in  (0,1)$, and we denote by $\mathcal J_{\eta}$ the convolution mollification operator $\mathcal{J}_{\eta} f = \phi_{\eta} * f$. We recall that  $\mathcal{J}_{\eta}$ is bounded on all $L^p$ spaces for any $p \in [1,\infty]$ and uniformly in $\eta$. Moreover,  $\mathcal{J}_{\eta}$ is self-adjoint and obeys $\int_{\TT^2} \mathcal J_\eta f dx = \int_{\TT^2} f dx$ for any $\eta \in (0,1)$.

Let $\chi(x)$ be a cutoff function such that $0\leq \chi(x)\leq 1$, $\chi(x)=0$ when $x\leq 1$, and $\chi(x) =1$ when $x\geq 2$. 
For each $\eta \in (0,1)$, let 
\begin{equation}\label{chi-1}
    \chi^{\eta}(x) = \chi(x/\eta).
\end{equation}
Then  $\chi^\eta(x) = 1$ when $x \geq 2\eta$ and $\chi^\eta(x) = 0$ when $x\leq \eta$. With this notation, we 
consider the $\eta$-regularized system  
\noeqref{N-NPNS-mo-1}
\noeqref{N-NPNS-mo-2}
\noeqref{N-NPNS-mo-3}
\noeqref{N-NPNS-mo-4}
\noeqref{N-NPNS-mo-5}
\begin{subequations}\label{N-NPNS-mo-system}
    \begin{align}
        &\partial_t c_i^\eta + \j  u^\eta\cdot \nabla c_i^\eta = D_i\nabla \cdot
\left(
 \nabla c^\eta_i
+
 \frac{z_i e}{k_B T^\eta} c^\eta_i \nabla \j \phi^\eta
+
 \chi^\eta (c_i^{\eta}) c_i^\eta \log c_i^\eta \j \nabla \log  T^\eta
\right),
 i=1,\dots,N,\label{N-NPNS-mo-1}
        \\
        &-\varepsilon\Delta \phi^\eta = \rho^\eta = \sum\limits_{i=1}^N e  z_i c_i^\eta, \label{N-NPNS-mo-2}
        \\
        &\partial_t u^\eta + \j u^\eta\cdot \nabla u^\eta - \nu\Delta u^\eta + \nabla p^\eta 
        = g\alpha_T(T^\eta-T^\eta_r)  \vec{k} -\j  \left( \rho^\eta \nabla \j    \phi^\eta\right), \label{N-NPNS-mo-3}
        \\
        &\nabla\cdot u^\eta = 0,\label{N-NPNS-mo-4} 
        \\
        &\partial_t T^\eta +\j   u^\eta\cdot \nabla T^\eta - \kappa\Delta T^\eta = 0,\label{N-NPNS-mo-5}
    \end{align}
\end{subequations}
with initial conditions 
\[(c_i^\eta(0),u^\eta(0),T^\eta(0))=(\j c_{i0},\j u_0,\j T_0):=(c^\eta_{i0},u^\eta_0,T^\eta_0).
\]
Note that these mollified initial data are all smooth (in $C^\infty(\mathbb T^2)$) provided that $c_{i0}, u_0, T_0$ are $L^1$ integrable. The reference temperature is given by $T_r^\eta = \overline{T^\eta}$.
According to the properties of mollifiers, the conditions $c_{i0}\geq a_0 > 0$, $T_0\geq T^*>0$, and $\overline{u_0}=0$ imply that $c_{i0}^\eta\geq a_0 >0$, $T^\eta_0\geq T^*>0$, and $\overline{u^\eta_0}=0$. Also, $\overline{T^\eta} = \overline{T^\eta_0} = \overline{T_0}$ and $\overline{c_i^\eta} = \overline{c_{i0}^\eta}= \overline{c_{i0}}$ are invariant in time, and thus $T_r^\eta = \overline{T_0}$. Moreover, $\overline{u^\eta}=\overline{u^\eta_0}=0$ and  $u^\eta \in H$.

\begin{rem}
   The term $\log c_i$ in the original system makes physical sense, since ionic concentrations cannot be negative. From a mathematical standpoint, however, it is not well-defined until the nonnegativity of $c_i$ has been established. This creates a difficulty, because one must first prove the existence of solutions before proving such a property. To overcome this issue, we introduce a cutoff function and replace the logarithmic term by $\chi^\eta(c_i^\eta)\log c_i^\eta$. This expression is well-defined, since $\chi^\eta(c_i^\eta)$ vanishes whenever $c_i^\eta \le \eta$, so the truncated logarithmic term can be extended by zero in that region. Thus, the mollified system is a well-defined mathematical model, and one can then study the existence of its solutions. 
\end{rem}

The following proposition addresses the global existence of smooth solutions to the $\eta$-regularized system:

\begin{prop} \label{ap} 
Let $\eta \in (0,1)$ and let $\mathcal T > 0$ be arbitrary. Suppose $c_{i0}, u_0, T_0 \in L^1(\TT^2)$ with $c_{i0}\geq 0$ and $T_0\geq T^*>0$. Assume $c_{i0}$ satisfies the compatibility condition \eqref{eqn:compa}, and $u_0$ has zero mean and is divergence-free. The $\eta$-regularized system \eqref{N-NPNS-mo-system} has a solution $(c^\eta_i, u^\eta, T^\eta)$ on $[0, \mathcal T]$ such that
\be \label{eta-regularity}
(c^\eta_i, u^\eta, T^\eta) \in C([0,\mathcal{T}]; H^m(\TT^2))
\ee for any $m \in \N$. Moreover, $T^\eta\geq T^*$ and $c^\eta_i\geq 0$. 
\end{prop}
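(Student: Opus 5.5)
We construct solutions by an iterative (Picard-type) scheme on the linearized mollified system, followed by uniform-in-iteration bounds in every $H^m$ and a passage to the limit. All constants below may depend on $\eta$; this is harmless since $\eta$ is fixed.

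\textbf{Step 1: the iteration.} Set $(c_i^{(0)},u^{(0)},T^{(0)})=(c_{i0}^\eta,u_0^\eta,T_0^\eta)$. Given the $n$-th iterate, the $(n+1)$-st solves the linear problem
\begin{align*}
&\partial_t c_i^{(n+1)} + \mathcal J_\eta u^{(n)}\cdot\nabla c_i^{(n+1)} = D_i\nabla\cdot\Big(\nabla c_i^{(n+1)} + \tfrac{z_ie}{k_BT^{(n)}}c_i^{(n+1)}\nabla\mathcal J_\eta\phi^{(n)} + \chi^\eta(c_i^{(n)})c_i^{(n+1)}\log c_i^{(n)}\,\mathcal J_\eta\nabla\log T^{(n)}\Big),\\
&\partial_t u^{(n+1)} + \mathcal J_\eta u^{(n)}\cdot\nabla u^{(n+1)} -\nu\Delta u^{(n+1)}+\nabla p^{(n+1)} = g\alpha_T(T^{(n)}-T_r)\vec k - \mathcal J_\eta(\rho^{(n)}\nabla\mathcal J_\eta\phi^{(n)}),\\
&\partial_t T^{(n+1)}+\mathcal J_\eta u^{(n)}\cdot\nabla T^{(n+1)}-\kappa\Delta T^{(n+1)}=0.
\end{align*}
These are linear parabolic equations with smooth coefficients, so each admits a unique smooth global solution.

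We then check that the structural properties propagate through the iteration.
\begin{itemize}
\item The maximum principle for the transport–diffusion equation with divergence-free drift gives $T^{(n+1)}\ge T^*$. Hence $1/T^{(n)}\le 1/T^*$, and all derivatives of $\log T^{(n)}$ are controlled by $H^m$ norms of $T^{(n)}$.
\item The $c$-equation has the form \eqref{eqeq}, with $C_2=0$ and $p=c_i^{(n)}$, and with drift coefficients bounded in $L^\infty$ by Sobolev norms of the previous iterate. Proposition~\ref{nonneg} therefore yields $c_i^{(n+1)}\ge0$.
\item Means are conserved, so the compatibility condition \eqref{eqn:compa} and $\overline{u^{(n+1)}}=0$ persist.
\end{itemize}

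\textbf{Step 2: uniform $H^m$ bounds.} First, $\|c_i^{(n)}\|_{L^1}=\overline{c_{i0}}$ is conserved by nonnegativity, and $\|T^{(n)}\|_{L^\infty}\le\|T_0^\eta\|_{L^\infty}$. Because every coefficient involves either $\mathcal J_\eta$ or the heat/Poisson structure, $L^1$ bounds on $c^{(n)}$ give $C^\infty$ bounds on $\mathcal J_\eta\phi^{(n)}$, and $L^2$ bounds on $u^{(n)}$ give $C^\infty$ bounds on $\mathcal J_\eta u^{(n)}$, with constants depending on $\eta$.

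Next we run $\Lambda^m$ energy estimates in the order $T$, then $u$, then $c$. For the logarithmic flux term we use Proposition~\ref{prop:est-logterm} with $g=\mathcal J_\eta\nabla\log T^{(n)}$, after moving one derivative onto $\Lambda^{m+1}c^{(n+1)}$ and absorbing it into the dissipation.

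\textbf{Main obstacle.} Proposition~\ref{prop:est-logterm} carries the power $\|\Lambda^m c^{(n)}\|^m$, so a naive induction only closes on a short time $\mathcal T_1$ via a bound of the form $y_{n+1}\le C+C\int_0^t(1+y_n)^{m+1}$. To obtain an arbitrary $\mathcal T$, I would first close an $L^2$ estimate for $c^{(n+1)}$ with no derivatives on $c^{(n)}$, using the pointwise bound $|\chi^\eta(c)\log c|\le|\log\eta|+c$ and Gagliardo–Nirenberg. I expect a cruder route to suffice: since
\[
\|\chi^\eta(c^{(n)})\log c^{(n)}\|_{L^\infty}\le C_\eta+\|c^{(n)}\|_{L^\infty},
\]
one could instead mollify the logarithmic factor as well, or establish the solution first on a short interval. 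The time $\mathcal T_1$ depends only on norms of the data that are bounded uniformly on $[0,\mathcal T]$ by a priori estimates for the limit system. The natural candidates are the energy/entropy estimates at fixed $\eta$, where mollification makes all couplings bounded and the resulting Grönwall bound is allowed to depend on $\eta$. Steps are then iterated up to $\mathcal T$. This continuation argument is the step I expect to be delicate.

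\textbf{Step 3: convergence.} Taking differences of consecutive iterates and estimating them in $L^2$ shows the map is a contraction on a short interval. This uses the Lipschitz property of $x\mapsto\chi^\eta(x)\log x$, which holds since $\chi^\eta$ vanishes near $0$, with constant $C(1+\|c\|_{L^\infty})$. Interpolating with the uniform $H^m$ bounds gives convergence in $C([0,\mathcal T_1];H^m)$ for all $m$. The limit solves \eqref{N-NPNS-mo-system}, inherits $T^\eta\ge T^*$ and $c_i^\eta\ge0$, and the continuation from Step 2 extends it to $[0,\mathcal T]$.
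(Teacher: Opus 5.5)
There is a genuine gap, and it is exactly the step you flag as delicate: extending from a short interval to an arbitrary $\mathcal T$. Your Step~2 shows that the time $\mathcal T_1$ on which the iterates are uniformly bounded in $H^m$ depends on $\|c_{i0}^\eta\|_{H^m}$, through the factor $\|\Lambda^m c^{(n)}\|_{L^2}^m$ in Proposition~\ref{prop:est-logterm}. Any continuation argument therefore needs a priori control of $\|c_i^\eta(t)\|_{H^m}$ on all of $[0,\mathcal T]$, and none of your candidates supplies it.
\begin{itemize}
\item Energy/entropy estimates at fixed $\eta$ only control $L^2$-level quantities. You give no argument that the local existence time can be made to depend only on such low norms, and in your scheme it does not.
\item Mollifying the factor $\chi^\eta(c)\log c$ changes the system \eqref{N-NPNS-mo-system}, so it would prove a different statement.
\end{itemize}
The same defect affects Step~3. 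Since $\mathcal T_1=\mathcal T_1(m)$ shrinks as $m$ grows, the iteration alone does not give convergence in $C([0,\mathcal T_1];H^m)$ for every $m$ on one common interval.

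The missing ingredient is a propagation-of-regularity estimate for the mollified solution itself, proved by induction on $m$. This is what the paper does in Appendix~\ref{sec:appendix-c}.
\begin{itemize}
\item At fixed $\eta$ the $L^2$ bounds are global. They use $T^\eta\ge T^*$, conservation of $\|c_i^\eta\|_{L^1}$, the bound $|\chi^\eta(c)\log c|^4\le C_\eta(1+c)$, and the fact that $\nabla\mathcal J_\eta\phi^\eta$ and $\mathcal J_\eta\nabla\log T^\eta$ are bounded in $L^\infty$ by constants depending only on $\eta$ and the data.
\item Since all transport fields are mollified, $u^\eta$ and $T^\eta$ then close directly in every $H^m$.
\item For $c_i^\eta$, the key point is that once you work with the solution rather than the iterates, the flux involves $\chi^\eta(c)\,c\log c$ of a single function. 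Proposition~\ref{prop:log} gives $\|\chi^\eta(c)c\log c\|_{H^m}\le C(\|c\|_{H^m}+\|c\|_{H^m}^2+\|c\|_{H^{m-1}}^{m-1}\|c\|_{H^m}^2)$.
\item The $H^m$ energy inequality therefore becomes $\frac{d}{dt}\|c\|_{H^m}^2\le a(t)\|c\|_{H^m}^2$ with $a(t)=C(1+\|c\|_{H^m}^2+\|c\|_{H^{m-1}}^{2m-2}\|c\|_{H^m}^2)$. The induction hypothesis $c\in L^\infty(0,\mathcal T;H^{m-1})\cap L^2(0,\mathcal T;H^m)$ makes $a\in L^1(0,\mathcal T)$.
\item Gr\"onwall then yields $c\in L^\infty H^m\cap L^2 H^{m+1}$ on any finite interval. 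For the base case $m=1$, Agmon's inequality handles the $L^\infty$ factor in the $H^1$ bound of Proposition~\ref{prop:log}.
\end{itemize}
These bounds keep your local existence time bounded below on $[0,\mathcal T]$. That gives the continuation, and via Lions--Magenes also the $C([0,\mathcal T];H^m)$ regularity for every $m$.
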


The proof of Proposition~\ref{ap} is established in Appendices \ref{sec:appendix-b} and \ref{sec:appendix-c}, where Appendix \ref{sec:appendix-b} addresses the local well-posedness of the $\eta$-regularized system and  Appendix \ref{sec:appendix-c} extends any local solution to a global solution based on classical energy methods. In addition, we point out that the uniqueness of smooth solutions also holds. We do not state this explicitly, since uniqueness of solutions to the mollified system is not needed for proving the well-posedness of the original system.

\begin{rem}
    In Proposition~\ref{ap}, we only require the initial concentrations to be nonnegative. However, we will require $c_{i0}\geq a_0>0$ to be strictly positive in order to prove the existence and uniqueness of solutions to the original system. The preservation of a strictly positive lower bound for the ionic concentrations is much more involved, which will be established in the rest of this section.
\end{rem}

%The rest of this section is dedicated to establishing uniform-in-$\eta$ bounds that are needed for the construction of unique local solutions to the original system. 

In order to prove existence and uniqueness of solutions to the original system, we need not only uniform-in-$\eta$ bounds of solutions $(c_i^\eta, u^\eta, T^\eta)$, but also the uniform-in-$\eta$ strictly positive lower bound of $c_i^\eta$ provided that initially $c^\eta_{i0} \geq a_0>0$. These two together require uniform-in-$\eta$ bounds of $c_i^\eta, u^\eta, T^\eta$ in $L_{t}^{\infty}H_x^1\cap L_t^2 H_x^2$, $L_{t}^{\infty}H_x^1\cap L_t^2 H_x^2$, and $L_{t}^{\infty}H_x^{5/2}\cap L_t^2 H_x^{7/2}$, respectively. In particular, the high regularity requirement for the temperature is due to the need to prove a strict positive lower bound of $c_i^\eta$ (see Remark~\ref{rem:T}). However, we are not able to directly prove the uniform-in-$\eta$ bounds of $c_i^\eta$ in $L_{t}^{\infty}H_x^1\cap L_t^2 H_x^2$. Instead, one needs to implement the following process. 
\begin{enumerate}[Step 1.]
    \item Show that $c_i^{\eta}$, $u^\eta$, and $T^\eta$ are uniformly bounded in $\eta$ in $L_{t}^{\infty}L_x^2\cap L_t^2 H_x^1$, $L_{t}^{\infty}H_x^1\cap L_t^2 H_x^2$, and $L_{t}^{\infty}H_x^{5/2}\cap L_t^2 H_x^{7/2}$, respectively, on some short time interval $[0,\mathcal T_1]$. 
    \item Show that for each $\eta\in(0,1)$, there exists some positive constant $a_\eta>0$ such that the global regularized concentrations $c_i^{\eta}$ obey $c_i^{\eta} \ge a_{\eta}$ for all $i=1,\dots,N$ and for all times $t \in [0,\mathcal T_1]$. Then show that $\log c_i^{\eta}$ are uniformly bounded in $L_{t}^{\infty}L_x^2$ and $L_t^2 H_x^1$ on some short time interval $[0,\mathcal{T}_2]$ with $\mathcal T_2\leq \mathcal T_1$.
    \item Show that $c_i^{\eta}$ are uniformly bounded in $L_t^{\infty}H_x^1$ and $L_t^2 H_x^2$ on $[0,\mathcal{T}_0]$ with $\mathcal T_0 \leq \mathcal T_2$.
    \item Show that there exists some constant $a>0$, independent of $\eta$, such that $c_i^\eta \geq a$ on $[0,\mathcal{T}_0]$ for any $\eta$.
\end{enumerate}
We summarize these results in the following theorem.
\begin{Thm}\label{thm:uniform-bound}
    Let $c_{i0} \in H^1$, $u_0 \in V$, $T_0 \in H^{\frac{5}{2}}$, with $T_0\geq T^*>0$ and $c_{i0}\geq a_0> 0$. Suppose $c_{i0}$ satisfy the compatibility condition \eqref{eqn:compa} and $u_0$ has zero mean. There exists a time $\mathcal T_0>0$ such that $(u^\eta, T^\eta, c_i^\eta)$ satisfy the following uniform-in-$\eta$ bounds: 
    \begin{align*}
        &u^\eta \,\,  \text{are uniformly bounded in} \,\,  L^\infty(0,\mathcal T_0; V)\cap L^2(0,\mathcal T_0; \mathcal D(A)),
        \\
        & T^\eta \,\,  \text{are uniformly bounded in} \,\, L^\infty(0,\mathcal T_0; H^{\frac52})\cap L^2(0,\mathcal T_0; H^{\frac72}),
        \\
        &c_i^\eta   \,\,  \text{are uniformly bounded in} \,\,  L^\infty(0,\mathcal T_0; H^1) \cap L^2(0,\mathcal T_0;  H^{2}).
    \end{align*}
    Moreover, there exists a constant $a>0$, independent of $\eta$, such that $c_i^\eta \geq a >0$ for $t\in[0,\mathcal T_0]$ and a.e. $x\in\TT^2$.
\end{Thm}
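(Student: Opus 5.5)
We follow the four-step program announced before the statement, working throughout with the global smooth solutions $(c_i^\eta,u^\eta,T^\eta)$ of Proposition~\ref{ap}, for which every computation is justified. All constants must be independent of $\eta$.

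\textbf{Step 1.} The temperature equation is decoupled from $c_i^\eta$. Testing \eqref{N-NPNS-mo-5} against $\Lambda^{5}(T^\eta-T_r)$ and using commutator estimates for $\mathcal J_\eta u^\eta\cdot\nabla T^\eta$ gives a bound for $\frac{d}{dt}\|\Lambda^{5/2}T^\eta\|_{L^2}^2+\kappa\|\Lambda^{7/2}T^\eta\|_{L^2}^2$ in terms of $\|u^\eta\|_{H^{5/2}}$-type quantities. To avoid these, we split the derivatives so that at most $\|u^\eta\|_{H^2}$ appears, multiplied by $\|T^\eta\|_{H^{5/2}}$ and absorbable pieces. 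The velocity is treated by the $V$-estimate, testing \eqref{N-NPNS-mo-3} with $Au^\eta$. The electric force $\mathcal J_\eta(\rho^\eta\nabla\mathcal J_\eta\phi^\eta)$ is controlled by $\|\rho^\eta\|_{L^2}\|\nabla\phi^\eta\|_{L^\infty}\lesssim\|c^\eta\|_{L^2}\|c^\eta\|_{L^4}$.

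For the concentrations we take the $L^2$ estimate of $c_i^\eta$. The drift term is bounded by $\frac{1}{T^*}\|c_i^\eta\|_{L^4}\|\nabla\phi^\eta\|_{L^4}\|\nabla c_i^\eta\|_{L^2}$. The thermodiffusion term is bounded by
\[
\|\chi^\eta(c)c\log c\|_{L^2}\,\|\nabla\log T^\eta\|_{L^\infty}\,\|\nabla c\|_{L^2}.
\]
Here $T^\eta\ge T^*$ (maximum principle) and $\|\nabla T^\eta\|_{L^\infty}\lesssim\|T^\eta\|_{H^{5/2}}$. The logarithm is handled by $|c\log c|\le C(1+c^{3/2})$ together with Ladyzhenskaya's inequality. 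The result is a polynomial differential inequality $Y'\le C(1+Y)^p$ for
\[
Y=\sum_i\|c_i^\eta\|_{L^2}^2+\|u^\eta\|_{V}^2+\|T^\eta-T_r\|_{H^{5/2}}^2,
\]
with dissipation retained. This yields a short time $\mathcal T_1$ depending only on the data.

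\textbf{Step 2.} For fixed $\eta$, Proposition~\ref{prop:pos-lower-bdd} applies to \eqref{N-NPNS-mo-1}, rewritten in the form \eqref{eqeq} with $p=q=c_i^\eta$ and $\mathcal G=\mathcal J_\eta\nabla\log T^\eta$. Since the solution is smooth, $\mathcal O<\infty$, and this gives some $a_\eta>0$, so $\log c_i^\eta$ is smooth. We then test the equation against $-\frac{\log c_i^\eta}{c_i^\eta}$, i.e.\ we compute $\frac{d}{dt}\frac12\|\log c_i^\eta\|_{L^2}^2$. The diffusion produces
\[
\int\frac{|\nabla c|^2}{c^2}(1-\log c)\,dx .
\]
Its sign is the delicate point. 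We expect the favorable structure announced in the introduction, namely a dissipation $\|\nabla\log c\|_{L^2}^2$ modulo terms controlled by $\|\log c\|_{L^2}$ and Step~1 quantities. The thermodiffusion contribution is $\int \log c\,\nabla\log c\cdot\nabla\log T$, where we use $\chi^\eta=1$ on $c\ge 2\eta$ and handle the set $\{c<2\eta\}$ separately. Closing this by Gronwall gives a bound on $[0,\mathcal T_2]$.

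\textbf{Step 3.} We test with $-\Delta c_i^\eta$. The hard term is $\|\nabla(\chi^\eta(c)c\log c)\|_{L^2}$, which, after the cutoff derivative is removed, is essentially $\|\nabla c\,(1+\log c)\|_{L^2}$. We bound $\|\nabla c\log c\|_{L^2}\le\|\nabla c\|_{L^4}\|\log c\|_{L^4}$ and control $\|\log c\|_{L^4}$ by the $H^1$ bound on $\log c$ from Step~2. The cutoff term $\chi'(c/\eta)\eta^{-1}c\log c\,\nabla c$ is bounded uniformly because $c\sim\eta$ on its support, so $|\eta^{-1} c\log c|\le C|\log\eta|$. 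This still depends on $\eta$, so we instead bound it by $C|\log c|$ there. Interpolation and Young then close an $H^1\cap L^2H^2$ bound on $[0,\mathcal T_0]$.

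\textbf{Step 4.} With these bounds we reapply Proposition~\ref{prop:pos-lower-bdd}, now checking that $\mathcal O$ is uniform in $\eta$:
\begin{itemize}
\item $\nabla\cdot\mathcal F\sim\Delta\phi/T+\nabla\phi\cdot\nabla T/T^2$ lies in $L^2_tL^\infty$ since $c\in L^2H^2$;
\item $\mathcal G\in L^1W^{1,\infty}$ and $\nabla\cdot\mathcal G\in L^1L^\infty$ because $T\in L^2H^{7/2}$;
\item the term $\|\chi(p)\log p\|_{W^{1,\infty}}$ has a bounded $\chi'$ contribution, but its logarithmic part needs $\|\nabla c/c\|_{L^\infty}$, which is the main obstacle.
\end{itemize}
For that term we would try to rearrange the equation so that $\nabla(\chi\log c)\cdot\mathcal G$ is absorbed into the $C_2$ part, where it vanishes at the minimum point. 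This gives a lower bound $a$ depending only on the data, uniformly in $\eta$.
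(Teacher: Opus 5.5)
There is a genuine gap in Step~2, and it sits exactly where the paper's key new idea enters.

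Write $c=c_i^\eta$. Since $\nabla(\log c/c)=c^{-1}(1-\log c)\nabla\log c$, testing the thermodiffusion part of the equation against $\log c/c$ produces
\[
-D_i\int_{\TT^2}\chi^\eta(c)\log c\,\mathcal J_\eta\nabla\log T^\eta\cdot\nabla\log c\,dx
+D_i\int_{\TT^2}\chi^\eta(c)(\log c)^2\,\mathcal J_\eta\nabla\log T^\eta\cdot\nabla\log c\,dx .
\]
You only account for the first, harmless, term. The cubic term is critical with respect to the dissipation $D_i\|\nabla\log c\|_{L^2}^2$ you plan to use. The best H\"older--Ladyzhenskaya split gives $C\|\nabla\log T^\eta\|_{L^\infty}\|\log c\|_{L^2}\|\nabla\log c\|_{L^2}^2$ plus lower-order terms. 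Its prefactor is not small, and it does not become small on short time intervals. Any other split gives a power of $\|\nabla\log c\|_{L^2}$ larger than $2$. So the estimate does not close as planned. You also leave the sign of $\int|\nabla c|^2c^{-2}(1-\log c)\,dx$ unresolved (``we expect'').

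The missing idea is to split at $c=1$:
\begin{itemize}
\item On $\{c\le 1\}$, the diffusion yields an \emph{additional} nonnegative dissipation $\mathscr D_i^\eta=-D_i\int_{\{c\le1\}}|\nabla\log c|^2\log c\,dx$, which must be kept.
\item On $\{c>1\}$, one has $|\nabla c|^2c^{-2}\log c\le|\nabla c|^2$. This is only $L^1$ in time (uniformly, by Step~1), so you must close with Lemma~\ref{newloc} (nonlinear, with $L^1_t$ forcing) and Lemma~\ref{molb} for the data, not with a plain Gronwall.
\end{itemize}
The cubic term is then absorbed as follows. On $\{c\le1\}$ use
\[
|\nabla\log c|\,|\log c|^{3/2}\cdot|\log c|^{1/2}|\nabla\log T^\eta|\le\tfrac{D_i}{4}|\log c|\,|\nabla\log c|^2+C|\log c|^3|\nabla\log T^\eta|^2 .
\]
On $\{c\ge1\}$ use $|\log c|\le Cc^{1/8}$ together with conservation of $\|c\|_{L^1}$.

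For the cubic term alone there is an alternative. Write $\chi^\eta(c)(\log c)^2\nabla\log c=\nabla H(\log c)$ with $H(s)=\int_0^s\chi^\eta(e^r)r^2\,dr$, so $|H(s)|\le|s|^3/3$. Integrating by parts moves the derivative onto $\mathcal J_\eta\Delta\log T^\eta\in L^2_tL^\infty$. What remains is $\|\log c\|_{L^3}^3\lesssim\|\log c\|_{L^2}^2\|\log c\|_{H^1}$, which is subcritical.

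Now grant a corrected Step~2, i.e.\ $\log c_i^\eta$ bounded in $L^\infty(0,\mathcal T_2;L^2)\cap L^2(0,\mathcal T_2;H^1)$ uniformly in $\eta$. Then your Step~3 is a valid route that differs from the paper's:
\begin{itemize}
\item The paper bounds $\|\nabla c\log c\|_{L^2}$ by $\sqrt{\mathscr D_i^\eta}$ on $\{c<1\}$ and by $C\|c\|_{L^2}^{1/2}\|\nabla c\|_{L^4}$ on $\{c\ge1\}$, then uses $\mathscr D_i^\eta\in L^1_t$.
\item You use $\|\nabla c\|_{L^4}\|\log c\|_{L^4}$ instead, which avoids $\mathscr D_i^\eta$ in this step.
\end{itemize}
Make explicit that only $L^2_tH^1$ control of $\log c$ is available. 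What you actually use, after Young's inequality, is $\|\log c\|_{L^4}^4\lesssim\|\log c\|_{L^2}^2\|\log c\|_{H^1}^2\in L^1(0,\mathcal T_2)$ as a Gronwall weight, not a pointwise-in-time bound.

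In Step~4, the ``rearrangement'' you are after is simply the choice $C_3=0$, $q=c_i^\eta$, $C_2\mathcal G=D_i\mathcal J_\eta\nabla\log T^\eta$ in \eqref{eqeq}. This is what the paper does, already in Step~2. Proposition~\ref{prop:pos-lower-bdd} uses $\nabla(\chi(q)\log q)=0$ at the minimum point, so $\|\nabla c/c\|_{L^\infty}$ never enters. Only $\|\nabla\cdot\mathcal F\|_{L^\infty}$ and $\|\mathcal J_\eta\Delta\log T^\eta\|_{L^\infty}$ need to be uniformly integrable in time.

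Step~1 matches the paper up to bookkeeping.
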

The rest of this section is dedicated to establishing the four steps above.

\medskip
{\bf Step 1.}
We start by showing that $c_i^{\eta}$, $u^\eta$, and $T^\eta$ are uniformly bounded in $\eta$ in $L_{t}^{\infty}L_x^2\cap L_t^2 H_x^1$, $L_{t}^{\infty}H_x^1\cap L_t^2 H_x^2$, and $L_{t}^{\infty}H_x^{5/2}\cap L_t^2 H_x^{7/2}$, respectively, on some short time interval $[0,\mathcal T_1]$.

\begin{prop}\label{prop:uni-est-sol} 
    Suppose $c_{i0} \in L^2$, $u_0 \in V$, $T_0 \in H^{\frac{5}{2}}$, with $T_0\geq T^*>0$ and $c_{i0}\geq 0$. Let $c_{i0}$ satisfy the compatibility condition \eqref{eqn:compa} and $u_0$ has zero mean. For each $\eta\in(0,1)$, let $(u^\eta, c_i^\eta, T^\eta)$ be any smooth solution to the mollified system.
    There exists a time $\mathcal{T}_1$, such that $(u^\eta, T^\eta, c_i^\eta)$ satisfy the following uniform-in-$\eta$ bounds: 
    \begin{align*}
        &u^\eta \,\,  \text{are uniformly bounded in} \,\,  L^\infty(0,\mathcal T_1; V)\cap L^2(0,\mathcal T_1; \mathcal D(A)),
        \\
        & T^\eta \,\,  \text{are uniformly bounded in} \,\, L^\infty(0,\mathcal T_1; H^{\frac52})\cap L^2(0,\mathcal T_1; H^{\frac72}),
        \\
        &c_i^\eta   \,\,  \text{are uniformly bounded in} \,\,  L^\infty(0,\mathcal T_1; L^{2}) \cap L^2(0,\mathcal T_1;  H^{1}).
    \end{align*}
\end{prop}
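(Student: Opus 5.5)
We will derive a coupled differential inequality for the energy
\[
Y(t)=\sum_{i=1}^N\|c_i^\eta\|_{L^2}^2+\|\na u^\eta\|_{L^2}^2+\|\l^{\frac52}(T^\eta-T_r)\|_{L^2}^2+\|T^\eta-T_r\|_{L^2}^2 ,
\]
with all constants independent of $\eta$. We will aim for an inequality of the form $Y'+\mathcal D\le C(1+Y)^{k}$ for some $k>1$, where
\[
\mathcal D=\sum_i D_i\|\na c_i^\eta\|_{L^2}^2+\nu\|Au^\eta\|_{L^2}^2+\kappa\|\l^{\frac72}T^\eta\|_{L^2}^2 .
\]
Integrating the ODE on a short interval $[0,\mathcal T_1]$, with $\mathcal T_1$ depending only on $Y(0)$ (which is controlled uniformly in $\eta$ since $\j$ is bounded on $H^s$), then gives the stated uniform bounds.

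\textbf{Temperature.} We will first use the maximum principle for the transport–diffusion equation \eqref{N-NPNS-mo-5} with divergence-free drift $\j u^\eta$. This gives $T^*\le T^\eta\le\|T_0\|_{L^\infty}$, uniformly in $\eta$, so the factors $1/T^\eta$ and $\na\log T^\eta=\na T^\eta/T^\eta$ are harmless. Next we apply $\l^{5/2}$ to \eqref{N-NPNS-mo-5} and pair with $\l^{5/2}T^\eta$. We move one half-derivative onto the test function to get
\[
\Big|\int\l^{2}(\j u^\eta\cdot\na T^\eta)\,\l^{3}T^\eta\Big| .
\]
A Kato–Ponce commutator/product estimate then bounds $\|\l^2(\j u^\eta\cdot\na T^\eta)\|_{L^2}$ by
\[
C\|u^\eta\|_{H^2}\|\na T^\eta\|_{L^\infty}+C\|u^\eta\|_{L^\infty}\|T^\eta\|_{H^3}.
\]
We interpolate $\|T^\eta\|_{H^3}\le\|T^\eta\|_{H^{5/2}}^{1/2}\|T^\eta\|_{H^{7/2}}^{1/2}$, use Agmon/Brezis–Gallouet type bounds for the $L^\infty$ norms, and apply Young to absorb into $\frac\kappa2\|\l^{7/2}T^\eta\|^2+\frac\nu4\|Au^\eta\|^2$. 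The remainder is a polynomial in $Y$.

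\textbf{Velocity.} We take the $L^2$ inner product of \eqref{N-NPNS-mo-3} with $Au^\eta$. The 2D Navier–Stokes term is handled by Ladyzhenskaya:
\[
|(\j u\cdot\na u,Au)|\le C\|u\|^{1/2}\|\na u\|\|Au\|^{3/2}\le \tfrac\nu8\|Au\|^2+C\|\na u\|^4 .
\]
The buoyancy term is trivial. For the electric force we write
\[
\|\j(\rho^\eta\na\j\phi^\eta)\|_{L^2}\le\|\rho^\eta\|_{L^4}\|\na\phi^\eta\|_{L^4}\le C\|\rho^\eta\|_{L^2}^{3/2}\|\rho^\eta\|_{H^1}^{1/2},
\]
using elliptic regularity for \eqref{N-NPNS-mo-2} and the compatibility condition, which gives $\overline{\rho^\eta}=0$. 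This is then absorbed using $\mathcal D$ and Young.

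\textbf{Concentrations.} Pairing \eqref{N-NPNS-mo-1} with $c_i^\eta$ kills the advection term. The electromigration term is bounded by
\[
C\|c_i\|_{L^4}\|\na\phi\|_{L^4}\|\na c_i\|\le C\|c_i\|^{1/2}\|c_i\|_{H^1}^{1/2}\|\rho\|_{L^2}\|\na c_i\|,
\]
after which we absorb. The thermodiffusion term is
\[
\int\chi^\eta(c_i)c_i\log c_i\,\j\na\log T\cdot\na c_i .
\]
Here we use $|\chi^\eta(c)c\log c|\le C(|c|+|c|^{2})$. The linear part is harmless, but the quadratic part is the main obstacle. We will bound it by
\[
\|\na\log T\|_{L^\infty}\|c_i\|_{L^4}^2\|\na c_i\|\le C\|T\|_{H^{5/2}}\|c_i\|_{L^2}\|c_i\|_{H^1}^{2},
\]
which is not directly absorbable since the dissipation $\|c_i\|_{H^1}^2$ appears with coefficient $\|c_i\|_{L^2}$ that need not be small. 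To fix this, we will instead use $|c\log c|\le C_\delta(1+c^{1+\delta})$ with $\delta$ small. Gagliardo–Nirenberg then gives $\|c_i\|_{L^{2+2\delta}}^{1+\delta}\le C\|c_i\|^{1}\|c_i\|_{H^1}^{\delta}$, so the term is bounded by
\[
C\|T\|_{H^{5/2}}\|c_i\|\,\|c_i\|_{H^1}^{1+\delta},
\]
which Young absorbs into $\frac{D_i}{4}\|\na c_i\|^2$ at the cost of a power of $Y$.

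Summing the three estimates yields the superlinear inequality $Y'+\tfrac12\mathcal D\le C(1+Y)^k$ with $C$ independent of $\eta$. This gives a uniform existence time $\mathcal T_1$ and uniform bounds on $\sup_t Y$ and $\int_0^{\mathcal T_1}\mathcal D\,dt$, which is exactly the statement.
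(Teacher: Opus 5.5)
Your argument is correct, but it is organized differently from the paper's.

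\textbf{The paper's route.} The paper closes its nonlinear ODE at a lower level for the temperature. It couples $\|c_i^\eta\|_{L^2}^2$, $\|\nabla u^\eta\|_{L^2}^2$ and $\|\Lambda^{3/2}T^\eta\|_{L^2}^2$, with dissipation $\|\Lambda^{5/2}T^\eta\|_{L^2}^2$. The thermodiffusion term is handled through $|c\log c|\le C(1+c^{9/8})$, the H\"older split $\|c_i\|_{L^4}^2\|c_i^{1/8}\|_{L^8}^2\|\nabla T\|_{L^8}^2$, the conserved $L^1$ norm of $c_i^\eta$ (which uses $c_i^\eta\ge 0$), and an interpolation of $\|\nabla T\|_{L^8}$ between $\dot H^{3/2}$ and $\dot H^{5/2}$. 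The $L^\infty H^{5/2}\cap L^2 H^{7/2}$ bound for $T^\eta$ is then obtained afterwards by a \emph{linear} Gr\"onwall argument. That step uses that $H^{3/2}$ is an algebra and that $\|\Lambda^{3/2}u^\eta\|_{L^2}^2$ is already time-integrable.

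\textbf{Your route.} You put $\|\Lambda^{5/2}T^\eta\|_{L^2}^2$ into the energy from the start. This makes $\|\nabla\log T^\eta\|_{L^\infty}\lesssim \|T^\eta-T_r\|_{H^{5/2}}$ directly available. The thermodiffusion term is then disposed of by $|\chi^\eta(c)c\log c|\le C_\delta(1+|c|^{1+\delta})$ and Gagliardo--Nirenberg. You need neither the $L^1$ conservation nor the sign of $c_i^\eta$, since the cutoff kills the region $c\le\eta$.

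The price is that the transport term in the $\dot H^{5/2}$ estimate must absorb $\|Au^\eta\|_{L^2}$ and $\|\Lambda^{7/2}T^\eta\|_{L^2}$ simultaneously. Your product estimate plus Agmon achieves this, because the total dissipative power there is $3/2<2$. For example, $\|Au\|\,\|\Lambda^{3}T\|\,\|\nabla T\|_{L^\infty}\le \epsilon\|Au\|^2+\epsilon\|\Lambda^{7/2}T\|^2+C\|T-T_r\|_{H^{5/2}}^6$.

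\textbf{Comparison.} Your route gives a single superlinear ODE and is somewhat shorter. The paper's layered route keeps the superlinear closure at the $H^{3/2}$ level and obtains the top regularity linearly. It reuses exactly this structure in the global argument of Section~\ref{sec6}.

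\textbf{Two harmless slips.} First, your preliminary bound $|\chi^\eta(c)c\log c|\le C(|c|+|c|^2)$ only holds with $C\sim|\log\eta|$. This is irrelevant, since you discard that bound. Second, the Young step for the Navier--Stokes term as you wrote it leaves $C\|u\|_{L^2}^2\|\nabla u\|_{L^2}^4$ rather than $C\|\nabla u\|_{L^2}^4$. This is still a polynomial in $Y$ by Poincar\'e.
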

\begin{proof} 
    The regularized temperatures $T^{\eta}$ obey 
    \begin{equation}
    \|T^\eta (t)\|_{L^2}^2 + 2\kappa \int_0^t \|\nabla T^\eta(s)\|_{L^2}^2 ds = \|T^\eta_0\|_{L^2}^2 \leq \|T_0\|_{L^2}^2, \quad \|T^\eta(t)\|_{L^p} \leq \|T^\eta_0\|_{L^p} \leq \|T_0\|_{L^p}
\end{equation}
for any $p > 2$. This implies that
$\{T^\eta\}$ are uniformly bounded in $ L^\infty(0,\mathcal T; L^p)\cap L^2(0,\mathcal T; H^1)$ for any $\mathcal T>0$.
\begin{comment}
As $T^\eta_r=\overline{T^\eta}$ is invariant in time, we have the Poincar\'e inequality
$
\|T-T_r\|_{L^2}^2 \leq C_p \|\nabla T\|_{L^2}^2.
$
Taking the $L^2$ inner product of the equation obeyed by $T$ with $T-T_r$ and using the Poincar\'e inequality above, we obtain
\[
\frac{1}{2} \frac{d}{d t}\|T-T_r\|_{L^2}^2+ \frac{\kappa}{C_p}\|T-T_r\|_{L^2}^2 %=
\le0,
\]
yielding that for any $t>0$,
\begin{equation}\label{est:T}
    \|T(t)-T_r\|_{L^2}^2 \leq \|T_0-T_r\|_{L^2}^2 e^{-\frac{2\kappa}{C_p} t}.
\end{equation}
\end{comment}

Next, we couple and study the evolutions of $\|c^\eta_i\|_{L^2}$, $\|u^\eta\|_{H^1}$, and $\|T^\eta\|_{H^{\frac32}}$. In fact, the $L^2$ norm of each regularized concentration $c_i^{\eta}$ satisfies the following energy inequalities 
\begin{equation}
\begin{aligned}
&\hspace{0.3cm}\frac{1}{2}\frac{d}{dt}\|c^\eta_i\|_{L^2}^2 + D_i\|\na c^\eta_i\|_{L^2}^2 
\\
&= -z_i D_i k_{B}^{-1} e\int_{\TT^2}\frac1{T^\eta} c^\eta_i \na \j \phi^\eta \cdot \na c^\eta_i dx
-D_i \int_{\TT^2} \chi^\eta(c^\eta_i) (c^\eta_i \log c^\eta_i) \j\nabla \log T^\eta \cdot \na c^\eta_i dx
\\
&\le D_i C\left(  \|c^\eta_i\|_{L^4}\|\na \phi^\eta\|_{L^4} + \|\chi^\eta(c^\eta_i) (c^\eta_i \log c^\eta_i) \j \na \log  T^\eta\|_{L^2} \right)\|\na c^\eta_i\|_{L^2} 
\\
&\le D_i C\left((\|c^\eta_i\|_{L^2}^{\frac{1}{2}}\|\na c^\eta_i\|_{L^2}^{\frac{1}{2}} + \|c^\eta_i\|_{L^2} )\|\na \phi^\eta\|_{L^2}^{\frac{1}{2}}\|\rho^\eta\|_{L^2}^{\frac{1}{2}} + \|((c^\eta_i)^{\frac{9}{8}} +1)  \j \na \log T^\eta\|_{L^2} \right)\|\na c^\eta_i\|_{L^2}
\\
&\le \frac{D_i}{4}\|\na c^\eta_i\|_{L^2}^2 + C\|c^\eta_i\|_{L^2}^2 \|\na \phi^\eta\|_{L^2}^2 \|\rho^\eta\|_{L^2}^2 
+ C \|c^\eta_i\|_{L^2}^2\|\rho^\eta\|_{L^2}^2 + C\|\na T^\eta\|_{L^2}^2 + C\|(c^\eta_i)^{\frac{9}{8}} \j \na \log  T^\eta\|_{L^2}^2.
\end{aligned}
\end{equation} 
Here we have used the logarithmic inequality $|c^\eta_i \log c^\eta_i| \le C(1 +  (c^\eta_i)^{\frac{9}{8}})$ that holds due to the nonnegativity of $c^\eta_i.$ 
Using classical Gagliardo-Nirenberg interpolation inequalities, we have
\be 
\begin{aligned}
&C\|(c^\eta_i)^{\frac{9}{8}}\j \na \log  T^\eta\|_{L^2}^2
\le C\|c^\eta_i\|_{L^4}^2\| (c^\eta_i)^{\frac{1}{8}}\|_{L^8}^2 \|\na T^\eta\|_{L^{8}}^2
\\
\le &C(\|c^\eta_i\|_{L^2}\|\na c^\eta_i\|_{L^2} + \|c^\eta_i\|_{L^2}^2)\|c^\eta_i\|_{L^1}^{\frac{1}{4}}\|\Lambda^{\frac32} T^\eta\|_{L^{\frac83}}^2
\\
\leq &C(\|c^\eta_i\|_{L^2}\|\na c^\eta_i\|_{L^2} + \|c^\eta_i\|_{L^2}^2)\|c^\eta_i\|_{L^1}^{\frac{1}{4}}\|\Lambda^{\frac32} T^\eta\|_{L^{2}}^{\frac32} \|\Lambda^{\frac52} T^\eta\|_{L^{2}}^{\frac12}
\\
\le &\frac{D_i}{4}\|\na c^\eta_i\|_{L^2}^2 + \frac\kappa{4N} \|\Lambda^{\frac52} T^\eta\|_{L^2}^2+ C\|c^\eta_i\|_{L^1}\|\l^{\frac{3}{2}} T^\eta\|_{L^2}^{6} \|c^\eta_i\|_{L^2}^4 +  C\|c^\eta_i\|_{L^1}^{\frac{1}{3}} \|\l^{\frac{3}{2}}T^\eta\|_{L^2}^2\|c^\eta_i\|_{L^2}^{\frac83}.
\end{aligned}
\ee 
As $\|c^\eta_i\|_{L^1} = \|c^\eta_{i0}\|_{L^1}$ is conserved in time, the latter gives rise to the differential inequality 
\be 
\begin{aligned}
\frac{d}{dt}\|c^\eta_i\|_{L^2}^2
+ D_i\|\na c^\eta_i\|_{L^2}^2 
\le &\frac\kappa{2N} \|\Lambda^{\frac52} T^\eta\|_{L^2}^2 +  C\|\na T^\eta\|_{L^2}^2 + C\left(  \|\na \phi^\eta\|_{L^2}^2 \|\rho^\eta\|_{L^2}^2 
+ \|\rho^\eta\|_{L^2}^2\right)\|c^\eta_i\|_{L^2}^2
\\
&+  C\|\l^{\frac{3}{2}} T^\eta\|_{L^2}^{6} \|c_i^\eta\|_{L^2}^4 +  C \|\l^{\frac{3}{2}}T^\eta\|_{L^2}^2\|c^\eta_i\|_{L^2}^{\frac83},
\end{aligned}
\ee
and therefore,
\be 
\begin{aligned}
\frac{d}{dt}\sum_{i=1}^N\|c^\eta_i\|_{L^2}^2
+ \sum_{i=1}^N D_i\|\na c^\eta_i\|_{L^2}^2 
\le &\frac\kappa2 \|\Lambda^{\frac52} T^\eta\|_{L^2}^2 + C\|\na T^\eta\|_{L^2}^2 + C\left(1+\sum_{i=1}^N\|c^\eta_i\|_{L^2}^2\right)^3
\\
&+C(1+\|\l^{\frac{3}{2}} T^\eta\|_{L^2}^{2} )^3\left(1+\sum_{i=1}^N\|c^\eta_i\|_{L^2}^2\right)^2 .
\end{aligned}
\ee

Now we move on to the $H^1$ evolution of the regularized velocities. However, and due to the Poincar\'e inequality,  we only need to address the $L^2$ norm of their gradients, which evolve according to 
\begin{align*}
    \frac{1}{2}\frac{d}{dt}\|\na u^\eta\|_{L^2}^2
+ \nu \|\Delta u^\eta\|_{L^2}^2 = - \int_{\TT^2} (\j u^\eta\cdot \nabla) u^\eta \cdot \Delta u^\eta dx  &-g\alpha_T \int_{\TT^2}(T^\eta - T^\eta_r)e_2 \Delta u^\eta dx 
\\
&+ \int_{\TT^2}\j(\rho^\eta \na \j \phi^\eta) \Delta u^\eta dx .
\end{align*}
Here the nonlinear term will not vanish since $\j u^\eta\neq u^\eta$. We integrate by parts and use the Ladyzhenskaya interpolation inequality to bound
\begin{align*}
    -\int_{\TT^2} (\j u^\eta\cdot \nabla) u^\eta \cdot \Delta u^\eta dx = \int_{\TT^2} \j \nabla u^\eta\cdot \nabla u^\eta \cdot \nabla u^\eta dx \leq C \|\nabla u^\eta\|_{L^2}^2 \|\Delta u^\eta\|_{L^2}.
\end{align*}
In view of the H\"older, Young, and Ladyzhenskaya interpolation inequalities, together with classical elliptic estimates, we have
\begin{align*}
    \frac12\frac{d}{dt}\|\na u^\eta\|_{L^2}^2
+ \nu \|\Delta u^\eta\|_{L^2}^2 \leq &C (\|\nabla u^\eta \|_{L^2}^2+\|T^\eta - T^\eta_r\|_{L^2} + \|\rho^\eta\|_{L^4} \|\nabla\phi^\eta\|_{L^4})\|\Delta u^\eta \|_{L^2}
\\
\leq & C(\|\nabla u^\eta \|_{L^2}^4+\|T^\eta - T^\eta_r\|_{L^2}^2 + \|\rho^\eta\|^3_{L^2} \|\nabla\rho^\eta\|_{L^2}) + \frac12\nu \|\Delta u^\eta\|_{L^2}^2,
\end{align*}
where the zero-mean condition $\overline{\rho^\eta}=0$ is exploited.
Therefore,
\begin{align*}
    \frac{d}{dt}\|\na u^\eta\|_{L^2}^2
+ \nu \|\Delta u^\eta\|_{L^2}^2 \leq &C(\|\nabla u^\eta \|_{L^2}^4+\|T^\eta - T^\eta_r\|_{L^2}^2 + \|\rho^\eta\|^3_{L^2} \|\nabla \rho^\eta\|_{L^2})
\\
\leq & C(\|\nabla u^\eta \|_{L^2}^4+\|T^\eta_0 - T^\eta_r\|_{L^2}^2) + C\left(\sum_{i=1}^N \|c^\eta_i\|_{L^2}^2\right)^{\frac32} \left(\sum_{i=1}^N \|\nabla c^\eta_i\|_{L^2}\right)
\\
\leq & C(\|\nabla u^\eta \|_{L^2}^4+\|T^\eta_0 - T^\eta_r\|_{L^2}^2) +C\left(\sum_{i=1}^N \|c^\eta_i\|_{L^2}^2\right)^3 + \frac12 \sum_{i=1}^N D_i\|\nabla c^\eta_i\|_{L^2}^2.
\end{align*}

Regarding the $H^{\frac{3}{2}}$ norm of the regularized temperatures, and due to the Poincar\'e inequality, it is enough to consider the norms $\|\Lambda^{\frac32} T^\eta\|_{L^2}$, which obey 
\begin{equation}
\begin{aligned}
\frac{1}{2}\frac{d}{dt}\|\Lambda^{\frac{3}{2}}T^\eta\|_{L^2}^2 + \kappa \|\Lambda^{\frac{5}{2}}T^\eta\|_{L^2}^2
= - \int_{\TT^2}\Lambda (\j u^\eta \cdot \na T^\eta) \Lambda^{2}T^\eta dx
\le \|\na (\j u^\eta \cdot \na T^\eta)\|_{L^{\frac{4}{3}}} \|\Delta T^\eta\|_{L^4}.
\end{aligned}
\end{equation}
By making use of continuous Sobolev embeddings and interpolation inequalities, we obtain 
\begin{equation}
\begin{aligned}
&\frac{1}{2}\frac{d}{dt}\|\Lambda^{\frac{3}{2}}T^\eta\|_{L^2}^2 + \kappa \|\Lambda^{\frac{5}{2}}T^\eta\|_{L^2}^2
\\&\le C(\|\na u^\eta\|_{L^2}\|\na T^\eta\|_{L^4} + \|u^\eta\|_{L^4} \|\Delta T^\eta\|_{L^2})\|\Lambda^{\frac{5}{2}} T^\eta\|_{L^2}
\\&\le C\|\na u^\eta\|_{L^2} \|\Lambda^{\frac{3}{2}}T^\eta\|_{L^2} \|\Lambda^{\frac{5}{2}}T^\eta\|_{L^2} 
+C \|\na u^\eta\|_{L^2}\|\Lambda^{\frac{3}{2}}T^\eta\|_{L^2}^{\frac{1}{2}}\|\Lambda^{\frac{5}{2}}T^\eta\|_{L^2}^{\frac{3}{2}}
\\&\le C\|\na u^\eta\|_{L^2}^2 \left(\|\nabla u^\eta\|_{L^2}^2 \|\Lambda^{\frac{3}{2}}T^\eta\|_{L^2}^2 + \|\Lambda^{\frac{3}{2}}T^\eta\|_{L^2}^2 \right) + \frac{\kappa}{2} \|\Lambda^{\frac{5}{2}}T^\eta\|_{L^2}^2.
\end{aligned}
\end{equation} 
This yields the differential inequality 
\begin{equation}
\frac{d}{dt}\|\Lambda^{\frac{3}{2}}T^\eta\|_{L^2}^2 + \kappa \|\Lambda^{\frac{5}{2}}T^\eta\|_{L^2}^2
\le C\|\na u^\eta\|_{L^2}^2 \left( \|\nabla u^\eta\|_{L^2}^2 \|\Lambda^{\frac{3}{2}}T^\eta\|_{L^2}^2 + \|\Lambda^{\frac{3}{2}}T^\eta\|_{L^2}^2 \right).
\end{equation}

Finally, we combine the energy inequalities for $\|c^\eta_i\|_{L^2}$, $\|\nabla u^\eta\|_{L^2}$, and $\|\Lambda^{\frac32}T^\eta\|_{L^2}$, and we obtain 
\begin{align*}
    &\frac{d}{dt} \left(1+\sum_{i=1}^N\|c^\eta_i\|_{L^2}^2 + \|\nabla u^\eta\|_{L^2}^2 + \|\Lambda^{\frac32}T^\eta\|_{L^2}^2 \right) + \frac12\left(\sum_{i=1}^N D_i\|\na c^\eta_i\|_{L^2}^2  + \nu \|\Delta u^\eta \|_{L^2}^2 + \kappa \|\Lambda^{\frac52}T^\eta\|_{L^2}^2\right) 
    \\
    \leq &C\|\na T^\eta\|_{L^2}^2 + C\left(1+\sum_{i=1}^N\|c^\eta_i\|_{L^2}^2\right)^3
+C(1+\|\l^{\frac{3}{2}} T^\eta\|_{L^2}^{2})^3 \left(1+\sum_{i=1}^N\|c_i^\eta\|_{L^2}^2\right)^2 
\\
&+C(\|\nabla u^\eta \|_{L^2}^4+\|T^\eta_0 - T^\eta_r\|_{L^2}^2) + C\|\na u^\eta\|_{L^2}^2 \left( \|\nabla u^\eta\|_{L^2}^2 \|\Lambda^{\frac{3}{2}}T^\eta\|_{L^2}^2 + \|\Lambda^{\frac{3}{2}}T^\eta\|_{L^2}^2 \right).
\end{align*}

Denoting by 
\begin{align*}
    &y(t) = 1+\sum_{i=1}^N\|c^\eta_i\|_{L^2}^2 + \|\nabla u^\eta\|_{L^2}^2 + \|\Lambda^{\frac32}T^\eta\|_{L^2}^2,
    \\
    &z(t) = \sum_{i=1}^N D_i\|\na c^\eta_i\|_{L^2}^2  + \nu \|\Delta u^\eta \|_{L^2}^2 + \kappa \|\Lambda^{\frac52}T^\eta\|_{L^2}^2,
\end{align*}
we have
\begin{align}\label{ine:uni-bdd}
    \frac{dy}{dt} + \frac12 z \leq C y^5.
\end{align}
From $\frac{dy}{dt} \leq Cy^5$, we conclude that there exists a time $\mathcal T_1>0$ such that $y(t) \leq 2y(0)$ for any $t\in[0,\mathcal{T}_1]$. Moreover, integrating \eqref{ine:uni-bdd} from $0$ to $\mathcal{T}_1$, we deduce that  $\int_0^{\mathcal{T}_1} z(t) dt \leq 2y(0)+2C\mathcal T_1(2y(0))^5<\infty$. Therefore, it follows that for each $i=1,2,\dots,N$, $\{c_i^\eta\}$ (and thus $\{\rho^\eta=\sum_{i=1}^N ez_i c_i^\eta\}$) are uniformly bounded in $L^\infty(0,\mathcal T_1; L^2) \cap L^2(0,\mathcal T_1; H^1)$, $\{u^\eta\}$ are uniformly bounded in $L^\infty(0,\mathcal T_1; V) \cap L^2(0,\mathcal T_1; \mathcal D(A))$, and $\{T^\eta\}$ are uniformly bounded in $L^\infty(0,\mathcal T_1; H^{\frac32}) \cap L^2(0,\mathcal T_1; H^{\frac52})$.

Finally, it holds that 
\begin{align*}
    &\frac{1}{2}\frac{d}{dt}\|\Lambda^{\frac{5}{2}}T^\eta\|_{L^2}^2 + \kappa \|\Lambda^{\frac{7}{2}}T^\eta\|_{L^2}^2
\\= &- \int_{\TT^2}\Lambda^{\frac32} (\j u^\eta \cdot \na T^\eta) \Lambda^{\frac72}T^\eta dx
\le \|\Lambda^{\frac32} (\j u^\eta \cdot \na T^\eta)\|_{L^{2}} \|\Lambda^{\frac72}T^\eta\|_{L^2}
\\
\leq &C\|\Lambda^{\frac32} u^\eta\|_{L^2} \|\Lambda^{\frac52} T^\eta\|_{L^2} \|\Lambda^{\frac72}T^\eta\|_{L^2} \leq C\|\Lambda^{\frac32} u^\eta\|_{L^2}^2 \|\Lambda^{\frac52} T^\eta\|_{L^2}^2 + \frac\kappa2 \|\Lambda^{\frac72}T^\eta\|_{L^2}^2,
\end{align*}
where we have used the fact that $H^\frac32$ is a Banach algebra.  Consequently,
\begin{align*}
    \frac{d}{dt}\|\Lambda^{\frac{5}{2}}T^\eta\|_{L^2}^2 + \kappa \|\Lambda^{\frac{7}{2}}T^\eta\|_{L^2}^2 \leq C\|\Lambda^{\frac32} u^\eta\|_{L^2}^2 \|\Lambda^{\frac52} T^\eta\|_{L^2}^2.
\end{align*}
By the Gr\"onwall inequality, and due to the fact that $u^\eta\in  L^\infty(0,\mathcal T_1; V) \cap L^2(0,\mathcal T_1; \mathcal D(A))$, we conclude that $\{T^\eta\}$ are uniformly bounded in $L^\infty(0,\mathcal T_1; H^{\frac52}) \cap L^2(0,\mathcal T_1; H^{\frac72})$.

\end{proof}

{\bf Step 2.}
For the rest of this section, in addition to the assumptions in Proposition~\ref{prop:uni-est-sol}, we assume that $c_{i0}\geq a_0>0$. With this assumption, we have a strictly positive lower bound of $c_i^\eta$ for each $\eta\in(0,1)$. Indeed, by matching \eqref{eqeq} and \eqref{N-NPNS-mo-1}, we know $c_i^\eta$ satisfies \eqref{eqeq} with
\begin{equation}\label{choice-positivity}
    C_1 \mathcal F = D_i\frac{z_i e}{k_B T^\eta} \nabla \j \phi^\eta, \quad C_2 \mathcal G = D_i \j\nabla \log T^\eta, \quad \text{and} \quad C_3=0.
\end{equation}
    As $(c_i^\eta,u^\eta,T^\eta)\in L^\infty(0,\mathcal T; H^m)$ for any $m\in\mathbb N$ and $\mathcal T>0$, we have that condition \eqref{positive-condition} holds for any $\mathcal T>0$, and in particular for $\mathcal T=\mathcal T_1$. By applying Proposition~\ref{prop:pos-lower-bdd} we conclude that 
    \begin{equation}\label{eqn:non-uni-lower-bound}
        c_i^\eta(x,t)\geq a_\eta>0
    \end{equation}
   for all $t\in[0,\mathcal T_1]$ and a.e. $x\in\mathbb T^2$. Note that as we do not have uniform-in-$\eta$ bound of $\int_0^{\mathcal T_1}\|\nabla\cdot \mathcal F\|_{L^\infty} dt$, 
   the bound in \eqref{positive-condition} will depend on $\eta$, and therefore $a_\eta$ also depends on $\eta$. Nonetheless, with the positive lower bound, one can guarantee that the term $\log c_i^\eta$ makes sense.

    Next, we derive uniform-in-$\eta$ local bounds for the $L^2$ norms of $\log c_i^\eta$.

    \begin{prop}\label{prop:uni-est-sol-log}
        Suppose the assumption of Proposition~\ref{prop:uni-est-sol} holds, and in addition, assume $c_{i0} \geq a_0>0.$ Then there exists a time $\mathcal T_2>0$ such that $\log c_i^\eta$ are uniformly bounded in $L^\infty(0,\mathcal T_2; L^{2}) \cap L^2(0,\mathcal T_2;  H^{1})$ with $\mathcal T_2 \leq \mathcal T_1$. {Moreover, $\mathscr D_i^\eta := - D_i \int_{\left\{c_i^{\eta} \le 1\right\}} \frac{|\na c_i^{\eta}|^2}{(c_i^{\eta})^2} \log c_i^{\eta}\geq 0$ is uniformly bounded in $L^1(0,\mathcal T_2)$.}
    \end{prop}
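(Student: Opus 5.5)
The plan is a direct energy estimate for $w_i^\eta := \log c_i^\eta$, which is legitimate because of the (non-uniform) lower bound \eqref{eqn:non-uniform-lower-bound} from Step 2. The uniform-in-$\eta$ inputs are only those of Proposition~\ref{prop:uni-est-sol} on $[0,\mathcal T_1]$:
\begin{itemize}
\item $c_i^\eta \in L^\infty L^2\cap L^2H^1$,
\item $T^\eta \in L^\infty H^{5/2}$, so $\|\nabla \log T^\eta\|_{L^\infty}\le C$ since $T^\eta\ge T^*$,
\item $\|\nabla \phi^\eta\|_{L^\infty}\le C\|\rho^\eta\|_{L^4}$, which is in $L^2(0,\mathcal T_1)$.
\end{itemize}
The initial value satisfies $\|\log c_{i0}^\eta\|_{L^2}\le C(a_0,\|c_{i0}\|_{L^2})$, because $c_{i0}^\eta\ge a_0$.

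\textbf{Main identity.} I multiply \eqref{N-NPNS-mo-1} by $\frac{\log c_i^\eta}{c_i^\eta}$ and integrate over $\TT^2$. The time derivative gives $\frac12\frac{d}{dt}\|w_i^\eta\|_{L^2}^2$. The transport term is $\int \j u^\eta\cdot\nabla(\frac12 (w_i^\eta)^2)=0$ because $\j u^\eta$ is divergence-free. Integrating the diffusion term by parts with $\nabla(\frac{\log c}{c}) = \frac{\nabla c}{c^2}(1-\log c)$ yields the dissipation
\[
D_i\|\nabla w_i^\eta\|_{L^2}^2 - D_i\int_{\TT^2} |\nabla w_i^\eta|^2 w_i^\eta .
\]
The part of the second integral over $\{c_i^\eta\le 1\}$ is exactly $\mathscr D_i^\eta\ge0$, which is the extra good term. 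On $\{c_i^\eta>1\}$ it equals $-D_i\int \frac{|\nabla c|^2}{c^2}\log c$, and since $\frac{\log c}{c^2}\le \frac1{2e}$ it is bounded by $C\|\nabla c_i^\eta\|_{L^2}^2$, which lies in $L^1(0,\mathcal T_1)$ uniformly in $\eta$.

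\textbf{Electric drift.} This term becomes
\[
-\frac{D_iz_ie}{k_B}\int \frac{\nabla\j\phi^\eta}{T^\eta}\cdot\nabla w_i^\eta\,(1-w_i^\eta).
\]
It is bounded by $C\|\rho^\eta\|_{L^4}\|\nabla w_i^\eta\|_{L^2}(1+\|w_i^\eta\|_{L^2})$. By Young's inequality this is at most $\frac{D_i}{8}\|\nabla w_i^\eta\|^2 + C\|\rho^\eta\|_{L^4}^2(1+\|w_i^\eta\|_{L^2}^2)$.

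\textbf{Thermodiffusion (main obstacle).} This is the expected main difficulty, because the term
\[
-D_i\int \chi^\eta(c)\, w(1-w)\,\j\nabla\log T^\eta\cdot\nabla w
\]
is cubic in $w$. Moreover, on the support of $\chi^\eta$ the size of $|w|$ is controlled only by $|\log\eta|$, which is not uniform in $\eta$. I split by the sign of $w$.

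On $\{c>1\}$, the integrand is $\frac{\log c\,(1-\log c)}{c}\nabla c$, which is bounded by $C|\nabla c|$. This piece is therefore at most $C\|\nabla\log T^\eta\|_{L^\infty}\|\nabla c_i^\eta\|_{L^2}$, which is integrable in time.

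On $\{c\le1\}$, where $w\le0$, I use the weighted dissipation $\mathscr D_i^\eta$:
\[
|w|^2|\nabla w| \le \epsilon |w||\nabla w|^2 + C_\epsilon |w|^3 ,
\]
with the factor $\|\nabla\log T^\eta\|_{L^\infty}^2$ attached to the second term. The first piece is absorbed into $\mathscr D_i^\eta$. For the second piece I use the 2D interpolation $\|w\|_{L^3}^3\le C\|w\|_{L^2}^2\|w\|_{H^1}$, and Young's inequality gives $\frac{D_i}{8}\|\nabla w\|^2 + C(1+\|w\|_{L^2}^2)^2$. The linear-in-$w$ remainder $|w||\nabla w|$ is handled in the same way as the drift term.

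\textbf{Closing the estimate.} Setting $y=1+\sum_i\|w_i^\eta\|_{L^2}^2$, these bounds combine into
\[
\frac{dy}{dt}+\sum_i\Big(\tfrac{D_i}{2}\|\nabla w_i^\eta\|_{L^2}^2+\tfrac12\mathscr D_i^\eta\Big)\le f(t)\,y^2 ,
\]
where $f$ is bounded in $L^1(0,\mathcal T_1)$ uniformly in $\eta$. Comparison with the ODE $y'=fy^2$ yields a time $\mathcal T_2\le\mathcal T_1$ on which $y\le 2y(0)$. Integrating the inequality then gives the uniform $L^2(0,\mathcal T_2;H^1)$ bound for $\log c_i^\eta$ and the uniform $L^1(0,\mathcal T_2)$ bound for $\mathscr D_i^\eta$.
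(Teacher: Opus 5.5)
Your term-by-term estimates are sound, and the thermodiffusion part is leaner than the paper's. On $\{c_i^\eta>1\}$ you use that $\log c\,(1-\log c)/c$ is bounded, rather than $|\log c|\le Cc^{1/8}$. On $\{c_i^\eta\le 1\}$, the uniform bound on $\|\nabla\log T^\eta\|_{L^\infty}$ together with $\|w\|_{L^3}^3\le C\|w\|_{L^2}^2\|w\|_{H^1}$ gives growth $y^2$; the paper, working with $\|\log c_i^\eta\|_{L^4}^3\|\nabla\log T^\eta\|_{L^8}^2$, gets $\|\log c_i^\eta\|_{L^2}^6$. Absorbing $|w|^2|\nabla w|$ into $\mathscr D_i^\eta$ is the same device the paper uses.

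The gap is in the closing step. Comparison with $y'=fy^2$ gives $y\le 2y(0)$ only while $\int_0^t f_\eta\,ds\le 1/(2y(0))$. A bound on $f_\eta$ in $L^1(0,\mathcal T_1)$ that is uniform in $\eta$ does not make this hold on an $\eta$-independent interval. For example, with $f_\eta=M\eta^{-1}\mathbf 1_{[0,\eta]}$ and $My(0)\ge 1$, the solution of $y'=f_\eta y^2$ blows up before time $\eta$.

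In your inequality the culprit is $C\|\nabla c_i^\eta\|_{L^2}^2$, which comes from the diffusion term on $\{c_i^\eta>1\}$. Proposition~\ref{prop:uni-est-sol} only gives $\int_0^t\|\nabla c_i^\eta\|_{L^2}^2\,ds\le C_3+C_4t$, with no uniform smallness as $t\to0$, since $c_{i0}$ is merely in $L^2$ there.

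The repair has two parts:
\begin{itemize}
\item Keep $C\|\nabla c_i^\eta\|_{L^2}^2$ and $C\|\nabla c_i^\eta\|_{L^2}$ as an additive forcing $h$, rather than folding them into the coefficient of $y^2$.
\item Note that the genuinely multiplicative coefficients are better than $L^1$: $\|\rho^\eta\|_{L^4}^2\le C\|\rho^\eta\|_{L^2}\|\rho^\eta\|_{H^1}$ is uniformly bounded in $L^2(0,\mathcal T_1)$, and $\|\nabla\log T^\eta\|_{L^\infty}$ is uniformly bounded in $L^\infty(0,\mathcal T_1)$.
\end{itemize}
You then have $y'\le gy^2+h$ with $\int_0^t g\le C(t+\sqrt t)$ and $\int_0^{\mathcal T_1}h\le H$, both uniformly in $\eta$. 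The function $Y:=y+H-\int_0^t h\ge y$ satisfies $Y'\le gY^2$, which gives an $\eta$-independent $\mathcal T_2$.

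Alternatively, bound $\|\rho^\eta\|_{L^4}^2\,y\le Cy^2+C\|\nabla\rho^\eta\|_{L^2}^2$ and invoke Lemma~\ref{newloc}. This is exactly how the paper closes: it keeps $2D_i\|\nabla c_i^\eta\|_{L^2}^2$ as additive forcing with only an $L^1$ bound, and neutralizes it by shifting $y$ by a large constant. With that change your argument gives both the uniform bound on $\log c_i^\eta$ and the uniform $L^1(0,\mathcal T_2)$ bound on $\mathscr D_i^\eta$.
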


    \begin{proof}
    %Since each $c_i^{\eta}$ is bounded from below by a positive constant, the strict positivity of the family of $\eta$-regularized concentrations $c_i^{\eta}$ follows, and thus $\log c_i^{\eta}$ is well-defined for all $i \in \left\{1, \dots, N\right\}$ and any $\eta > 0$. 
    The time evolution of $\log c_i^{\eta}$ is described by 
    \be 
    \begin{aligned}
&\pa_t \log c_i^{\eta} + \mathcal{J}_{\eta} u^{\eta} \cdot \na \log c_i^{\eta} - D_i \frac{\Delta c_i^{\eta}}{c_i^{\eta}} \\&\quad\quad= \frac{D_iz_i e}{k_B} \na \cdot \left(\frac{c_i^{\eta}\nabla\mathcal{J}_{\eta}\phi^{\eta}}{T^{\eta}} \right)\frac{1}{c_i^{\eta}}
+ D_i \na \cdot \left( \chi^\eta (c_i^{\eta}) c_i^\eta \log c_i^\eta \mathcal{J}_{\eta}  \nabla \log  T^\eta\right) \frac{1}{c_i^{\eta}}.
    \end{aligned}
    \ee Multiply the latter by $\log c_i^{\eta}$ and integrate spatially over $\TT^2$. In view of the divergence-free property satisfied by $J_{\eta}u^{\eta}$, the cancellation law
    \be 
\int_{\TT^2} \mathcal{J}_{\eta}u^{\eta} \cdot \na \log c_i^{\eta} \log c_i^{\eta} = 0
    \ee holds, and thus we obtain the energy equality
\be
\begin{aligned}
&\frac{1}{2} \frac{d}{dt}\|\log c_i^{\eta}\|_{L^2}^2 - D_i \int_{\TT^2} \frac{\Delta c_i^{\eta}}{c_i^{\eta}} \log c_i^{\eta} 
\\&\quad= \frac{D_iz_i e}{k_B} \int_{\TT^2} \na \cdot \left(\frac{c_i^{\eta}\nabla\mathcal{J}_{\eta}\phi^{\eta}}{T^{\eta}} \right)\frac{1}{c_i^{\eta}} \log c_i^{\eta}
+ D_i \int_{\TT^2} \na \cdot \left( \chi^\eta (c_i^{\eta}) c_i^\eta \log c_i^\eta \mathcal{J}_{\eta}  \nabla \log  T^\eta\right) \frac{1}{c_i^{\eta}} \log c_i^{\eta}.
\end{aligned}
\ee Integration by parts and differentiation yield the identity
\be 
\begin{aligned}
-D_i \int_{\TT^2}\frac{\Delta c_i^{\eta}}{c_i^{\eta}} \log c_i^{\eta} 
&= D_i\int_{\TT^2} \na c_i^{\eta} \cdot \frac{\na c_i^{\eta}}{(c_i^{\eta})^2} - D_i\int_{\TT^2} \na c_i^{\eta} \cdot \na c_i^{\eta} \frac{\log c_i^{\eta}}{(c_i^{\eta})^2}
\\&= D_i \|\na \log c_i^{\eta}\|_{L^2}^2
- D_i\int_{\TT^2} \frac{|\na c_i^{\eta}|^2}{(c_i^{\eta})^2} \log c_i^{\eta}.
\end{aligned}
\ee Splitting the domain of integration into $\TT^2 \cap \left\{c_i^{\eta} \le 1\right\}$ and $\TT^2 \cap \left\{c_i^{\eta} > 1 \right\}$, we have  
\be 
-D_i \int_{\TT^2}\frac{\Delta c_i^{\eta}}{c_i^{\eta}} \log c_i^{\eta} 
= D_i \|\na \log c_i^{\eta}\|_{L^2}^2 - D_i \int_{\left\{c_i^{\eta} \le 1\right\}} \frac{|\na c_i^{\eta}|^2}{(c_i^{\eta})^2}\log c_i^{\eta} - D_i \int_{\left\{c_i^{\eta} > 1\right\}} \frac{|\na c_i^{\eta}|^2}{(c_i^{\eta})^2}\log c_i^{\eta}.
\ee We note that 
\be 
{\mathscr D_i^\eta:=}- D_i \int_{\left\{c_i^{\eta} \le 1\right\}} \frac{|\na c_i^{\eta}|^2}{(c_i^{\eta})^2}\log c_i^{\eta} \ge 0
\ee whereas 
\be 
\left|D_i \int_{\left\{c_i^{\eta} > 1\right\}} \frac{|\na c_i^{\eta}|^2}{(c_i^{\eta})^2}\log c_i^{\eta}\right|
\le D_i\|\na c_i^{\eta}\|_{L^2}^2.
\ee Now we estimate the electromigration term. Integrating by parts gives
\be 
\begin{aligned}
&\frac{D_i z_i e}{k_B} \int_{\TT^2} \na \cdot \left(\frac{c_i^{\eta} \nabla\mathcal{J}_{\eta} \phi^{\eta}}{T^{\eta}} \right) \frac{1}{c_i^{\eta}} \log c_i^{\eta} 
\\&\quad\quad= \frac{D_i z_i e}{k_B} \int_{\TT^2} \frac{\log c_i^{\eta} \na  \mathcal{J}_{\eta} \phi^{\eta} \cdot \na \log c_i^{\eta}}{T^{\eta}}
- \frac{D_i z_i e}{k_B}\int_{\TT^2} \frac{\na  \mathcal{J}_{\eta} \phi^{\eta} \cdot \na \log c_i^{\eta}}{T^{\eta}},
\end{aligned}
\ee which can be bounded by
\be 
\begin{aligned}
    &\left|\frac{D_i z_i e}{k_B} \int_{\TT^2} \na \cdot \left(\frac{c_i^{\eta} \nabla \mathcal{J}_{\eta} \phi^{\eta}}{T^{\eta}} \right) \frac{1}{c_i^{\eta}} \log c_i^{\eta}  \right|
    \\&\le \frac{D_i}{16} \|\na \log c_i^{\eta}\|_{L^2}^2 
    + C  \left(\|\log c_i^{\eta}\|_{L^4}^2 \|\na \mathcal{J}_{\eta} \phi^{\eta}\|_{L^{4}}^2 + \|\na  \mathcal{J}_{\eta} \phi^{\eta}\|_{L^{2}}^2 \right)
    \\&\le \frac{D_i}{16}\|\na \log c_i^{\eta}\|_{L^2}^2
    + C  (\|\log c_i^{\eta}\|_{L^2}\|\na \log c_i^{\eta}\|_{L^2} + \|\log c_i^{\eta}\|_{L^2}^2+1 )\|\rho^{\eta}\|_{L^2}^2
    \\&\le \frac{D_i}{8} \|\na \log c_i^{\eta}\|_{L^2}^2 + C (\|\log c_i^{\eta}\|_{L^2}^2+1) (\|\rho^{\eta}\|_{L^2}^4 + 1)
\end{aligned}
\ee in view of H\"older's inequality, the uniform boundedness of the regularized temperatures from below, Ladenzhenskaya's interpolation inequality, the uniform boundedness of mollifiers in $L^4$, elliptic regularity estimates, and Young's inequality for products. As for the logarithmic term, it holds that 
\be 
\begin{aligned}
  &D_i \int_{\TT^2} \na \cdot \left( \chi^\eta (c_i^{\eta}) c_i^\eta \log c_i^\eta \mathcal{J}_{\eta}  \nabla \log  T^\eta\right) \frac{1}{c_i^{\eta}} \log c_i^{\eta} 
  \\&= D_i \int_{\TT^2} \chi^\eta (c_i^{\eta}) c_i^\eta \log c_i^\eta \mathcal{J}_{\eta}  \nabla \log  T^\eta  \cdot \left(\frac{\na c_i^{\eta}}{(c_i^{\eta})^2}\log c_i^{\eta} - \frac{\na c_i^{\eta}}{(c_i^{\eta})^2} \right).
\end{aligned}
\ee By applying H\"older and Young inequalities, interpolating in Sobolev spaces, and using the continuous Sobolev embedding of $H^{\frac{1}{2}}$ in $L^4$, we estimate 
\be 
\begin{aligned}
&\left|D_i \int_{\TT^2} \chi^\eta (c_i^{\eta}) c_i^\eta \log c_i^\eta \mathcal{J}_{\eta}  \nabla \log  T^\eta \cdot \frac{\na c_i^{\eta}}{(c_i^{\eta})^2}  \right|
\\&\le D_i \int_{\TT^2} |\log c_i^{\eta}| |\mathcal{J}_{\eta} \na \log T^{\eta}| |\na \log c_i^{\eta}|
\\&\le \frac{D_i}{16}\|\na \log c_i^{\eta}\|_{L^2}^2 + C\|\na \log T^{\eta}\|_{L^4}^2 \|\log c_i^{\eta}\|_{L^4}^2
\\&\le \frac{D_i}{16}\|\na \log c_i^{\eta}\|_{L^2}^2 + C(T^*)^{-2}\|\na T^{\eta}\|_{L^4}^2 \left(\|\log c_i^{\eta}\|_{L^2}^2 +\|\log c_i^{\eta}\|_{L^2}\|\na\log c_i^{\eta}\|_{L^2} \right)
\\&\le \frac{D_i}{8} \|\na \log c_i^{\eta}\|_{L^2}^2 + C(\|T^{\eta}\|_{H^{\frac{3}{2}}}^4 +1)(\|\log c_i^{\eta}\|_{L^2}^2 + 1).
\end{aligned}
\ee In order to estimate the quadratic logarithmic term, we employ again a  splitting technique by which we decompose the domain of integration into two regions $\left\{c_i^{\eta} \le 1\right\}$ and $\left\{c_i^{\eta} > 1\right\}$, and we obtain 
\be 
\begin{aligned}
&\left|D_i \int_{\TT^2} \chi^\eta (c_i^{\eta}) c_i^\eta \log c_i^\eta \mathcal{J}_{\eta}  \nabla \log  T^\eta  \cdot \frac{\na c_i^{\eta}}{(c_i^{\eta})^2}\log c_i^{\eta}\right|
\le D_i\int_{\TT^2}|\na \log c_i^{\eta}|(\log c_i^{\eta})^2|\mathcal{J}_{\eta} \nabla \log T^{\eta}|
\\&\le D_i\int_{\left\{c_i^{\eta} \ge 1 \right\}} |\na \log c_i^{\eta}|(\log c_i^{\eta})^2|\mathcal{J}_{\eta}\na \log T^{\eta}| + D_i\int_{\left\{c_i^{\eta} \le 1 \right\}}|\na \log c_i^{\eta}|(\log c_i^{\eta})^2|\mathcal{J}_{\eta} \na \log T^{\eta}|.
\end{aligned}
\ee Over the set $\left\{c_i^{\eta} \ge 1 \right\}$, we make use of the logarithmic inequality $|\log c_i^{\eta}| \le C(c_i^{\eta})^{\frac{1}{8}}$ and bound
\be 
\begin{aligned}
D_i\int_{\left\{c_i^{\eta} \ge 1 \right\}} |\na \log c_i^{\eta}|(\log c_i^{\eta})^2|\mathcal{J}_{\eta}\na \log T^{\eta}|
&\le \frac{D_i}{8}\|\na \log c_i^{\eta}\|_{L^2}^2 + C\|(c_i^{\eta})^{\frac{1}{4}}\|_{L^4}^2\|\na \log T^{\eta}\|_{L^4}^2
\\&\le \frac{D_i}{8}\|\na \log c_i^{\eta}\|_{L^2}^2 + C\|c_i^{\eta}\|_{L^1}^{\frac{1}{2}}\| T^{\eta}\|_{H^{\frac{3}{2}}}^2.
\end{aligned}
\ee Over the set $\left\{c_i^{\eta} <1 \right\}$, we use the fact that $-\log c_i^{\eta} = |\log c_i^{\eta}|$ and deduce that 
\be 
\begin{aligned}
&D_i\int_{\left\{c_i^{\eta} \le 1 \right\}}|\na \log c_i^{\eta}|(\log c_i^{\eta})^2|\mathcal{J}_{\eta} \na \log T^{\eta}|
= D_i\int_{\left\{c_i^{\eta} \le 1 \right\}}|\na \log c_i^{\eta}| |\log c_i^{\eta}|^{\frac{3}{2}} |\log c_i^{\eta}|^{\frac{1}{2}}|\mathcal{J}_{\eta} \na \log T^{\eta}|. 
\\&\quad\quad\le -\frac{D_i}{4} \int_{\left\{c_i^{\eta}\le 1 \right\}} \log c_i^{\eta} |\na \log c_i^{\eta}|^2 + C\int_{\left\{c_i^{\eta} \le 1\right\}} |\log c_i^{\eta}|^3 |\mathcal{J}_{\eta} \na \log T^{\eta}|^2.
\end{aligned}
\ee But an application of the Ladyzhenskaya interpolation inequality gives
\be 
\begin{aligned}
&C\int_{\left\{c_i^{\eta} \le 1\right\}} |\log c_i^{\eta}|^3 |\mathcal{J}_{\eta} \na \log T^{\eta}|^2
\le C\|\log c_i^{\eta}\|_{L^4}^3\|\na \log T^{\eta}\|_{L^8}^2
\\&\le C\left(\|\log c_i^{\eta}\|_{L^2}^{\frac{3}{2}} \|\na \log c_i^{\eta}\|_{L^2}^{\frac{3}{2}} + \|\log c_i^{\eta}\|_{L^2}^3 \right) \|\na \log T^{\eta}\|_{L^8}^2
\\&\le \frac{D_i}{8}\|\na \log c_i^{\eta}\|_{L^2}^2 + C(\|\log c_i^{\eta}\|_{L^2}^6 +1)(\|T^{\eta}\|_{H^{2}}^8+1)
\end{aligned}
\ee Putting all these estimates together, we infer that 
\be \label{logci-est}
\begin{aligned}
&\frac{d}{dt} \|\log c_i^{\eta}\|_{L^2}^2
+ D_i \|\na \log c_i^{\eta}\|_{L^2}^2 +{\mathscr D_i^\eta}
\\&\quad\quad\le C(\|\log c_i^{\eta}\|_{L^2}^6 +1)(\|T^{\eta}\|_{H^{2}}^8+ \|\rho^{\eta}\|_{L^2}^4 + \|c_i^{\eta}\|_{L^1}+ 1) {+2D_i\|\nabla c_i^\eta\|_{L^2}^2}.
\end{aligned}
\ee Let
\be 
\mathcal{M} = \sup\limits_{0<\eta<1 } \sup\limits_{t \in [0, \mathcal T_1]} \left(\|T^{\eta}\|_{H^{2}}^8+ \|\rho^{\eta}\|_{L^2}^4 + \|c_i^{\eta}\|_{L^1}+1\right).
\ee Then $1 \le \mathcal{M} < \infty$ as a consequence of the uniform boundedness of the regularized concentrations in $L^{\infty}(0,\mathcal T_1; L^2)$ and temperatures in $L^{\infty}(0,\mathcal T_1; H^{2})$. Therefore, we deduce that the $L^2$ norm of $\log c_i^{\eta}$ obeys the differential inequality 
\be 
\frac{d}{dt}\left(\|\log c_i^{\eta}\|_{L^2}^2 + \mathcal{M}\right) \le C\left(\mathcal{M} + \|\log c_i^{\eta}\|_{L^2}^2\right)^{{4}}{+2D_i\|\nabla c_i^\eta\|_{L^2}^2}
\ee on $[0, \mathcal T_1]$. 
In view of \eqref{ine:uni-bdd}, the gradients of the regularized concentrations are uniformly bounded in $\eta$ in $L^{2}(0, \mathcal T_1, L^2)$. Thus, we can apply Lemma~\ref{newloc} and Lemma~\ref{molb} to infer that 
\be 
\|\log c_i^{\eta}(t)\|_{L^2}^2 \le K
\ee  on a short time interval $[0, \mathcal{T}_2]$, where $\mathcal{T}_2\leq \mathcal T_1$ is independent of $\eta$, and $K$ depends only on the initial data and the parameters, but is independent of $\eta$ and time. 
Therefore, we deduce that $\log c_i^{\eta}$ is uniformly bounded in $\eta$ in the Lebesgue space $L^{\infty}(0,\mathcal{T}_2, L^2)$. {In addition, by integrating \eqref{logci-est} from $0$ to $\mathcal T_2$ and using uniform bounds of $\log c_i^\eta$, we conclude that $\mathscr D_i^\eta$ are uniformly bounded in $L^1(0,\mathcal T_2)$.}
\end{proof}

\begin{rem}
The evolution of $\log c_i^{\eta}$ in $L^2$ gives rise to an interesting dissipative structure dominated by the term $\mathscr D_i^\eta := - D_i \int_{\left\{c_i^{\eta} \le 1\right\}} \frac{|\na c_i^{\eta}|^2}{(c_i^{\eta})^2} \log c_i^{\eta}$, which is locally integrable in time. The latter is crucial to obtain good control of the $c_i^{\eta} \log c_i^{\eta}$ term in the evolution of the $H^1$ norm of the regularized concentrations, as shown in the subsequent step.  
\end{rem}

{\bf Step 3.}
 With the uniform bounds on $\log c_i^\eta$, we can proceed to establish  $L_t^{\infty}H_x^1$ and $L_t^2 H_x^2$ bounds for $c_i^\eta$ provided that $c_{i0}\in H^1$. 

\begin{prop}\label{prop:uni-est-sol-cihigh}
     Suppose the assumptions of Proposition~\ref{prop:uni-est-sol-log} hold, and in addition, assume $c_{i0}\in H^1$. Then there exists a time $\mathcal T_0>0$ such that $c_i^\eta$ are uniformly bounded in $L^\infty(0,\mathcal T_0; H^1) \cap L^2(0,\mathcal T_0;  H^{2})$ with $\mathcal T_0 \leq \mathcal T_2$.
\end{prop}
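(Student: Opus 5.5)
We take the $L^2$ inner product of \eqref{N-NPNS-mo-1} with $-\Delta c_i^\eta$ and sum over $i$. By Proposition~\ref{prop:uni-est-sol} we already control $u^\eta$ in $L^\infty V\cap L^2\mathcal D(A)$, $T^\eta$ in $L^\infty H^{5/2}\cap L^2H^{7/2}$ (so $\nabla\log T^\eta\in L^\infty L^\infty$ and $\nabla^2\log T^\eta\in L^\infty L^4$ uniformly, using $T^\eta\ge T^*$), and $c_i^\eta$ in $L^\infty L^2\cap L^2H^1$. Proposition~\ref{prop:uni-est-sol-log} additionally gives $\log c_i^\eta$ in $L^\infty L^2\cap L^2H^1$ and $\mathscr D_i^\eta\in L^1(0,\mathcal T_2)$. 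The identity we start from is
\[
\frac12\frac{d}{dt}\|\nabla c_i^\eta\|_{L^2}^2+D_i\|\Delta c_i^\eta\|_{L^2}^2=\int \j u^\eta\cdot\nabla c_i^\eta\,\Delta c_i^\eta-D_i\int\nabla\cdot\Big(\tfrac{z_ie}{k_BT^\eta}c_i^\eta\nabla\j\phi^\eta+\chi^\eta(c_i^\eta)c_i^\eta\log c_i^\eta\,\j\nabla\log T^\eta\Big)\Delta c_i^\eta .
\]

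\emph{Transport and electromigration terms.} These are handled by standard estimates. For the transport term, integrate by parts (as done for $u^\eta$) to get $|\int \nabla\j u^\eta\,\nabla c_i^\eta\,\nabla c_i^\eta|\le C\|\nabla u^\eta\|_{L^2}\|\nabla c_i^\eta\|_{L^2}\|\Delta c_i^\eta\|_{L^2}$, which is absorbed by Young. Expanding the electromigration divergence yields three terms: $\nabla c_i^\eta\cdot\nabla\j\phi^\eta/T^\eta$, $c_i^\eta\rho^\eta/T^\eta$, and $c_i^\eta\nabla\j\phi^\eta\cdot\nabla T^\eta/(T^\eta)^2$. We bound them with Ladyzhenskaya, elliptic regularity ($\|\nabla\phi^\eta\|_{L^\infty}\lesssim\|\rho^\eta\|_{L^4}$ up to a log, or simply $\|\nabla\phi^\eta\|_{L^4}\lesssim\|\rho^\eta\|_{L^2}$ paired with $\|\nabla c\|_{L^4}$) and the bounds on $T^\eta$. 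This gives contributions of the form $\frac{D_i}{8}\|\Delta c_i^\eta\|_{L^2}^2+C(1+\|\rho^\eta\|_{H^1}^2)\sum_j\|\nabla c_j^\eta\|_{L^2}^2+C$, with $\|\rho^\eta\|_{H^1}^2\in L^1_t$ uniformly.

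\emph{Logarithmic term: the main obstacle.} Expanding $\nabla\cdot(\chi^\eta c\log c\,\j\nabla\log T)$ gives
\[
(\chi^\eta)'(c)\,c\log c\,\nabla c\cdot\j\nabla\log T+\chi^\eta(c)(1+\log c)\nabla c\cdot \j\nabla\log T+\chi^\eta(c)\,c\log c\,\j\Delta\log T .
\]
The first term is harmless: $(\chi^\eta)'$ is supported on $\{\eta\le c\le2\eta\}$ and $|(\chi^\eta)'|\le C/\eta$, so $|(\chi^\eta)'(c)\,c\log c|\le C|\log\eta|$. This is still $\eta$-dependent, so we instead use that on this set $|(\chi^\eta)'(c)c|\le C$ and $|\log c|\le|\log c|$. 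That reduces it to the same type as the second term, $|\log c|\,|\nabla c|$. The key term is $\int|\log c_i^\eta|\,|\nabla c_i^\eta|\,|\Delta c_i^\eta|$, and we split it according to whether $c\ge1$ or $c\le1$.
\begin{itemize}
\item On $\{c\ge1\}$, use $|\log c|\le c^{1/8}$ and Gagliardo--Nirenberg on $\|c\|_{L^4}$, $\|\nabla c\|_{L^4}$ to get a bound polynomial in $\|\nabla c\|_{L^2}$ times a power of $\|\Delta c\|_{L^2}$ strictly less than $2$.
\item On $\{c\le1\}$, write $|\log c||\nabla c|=c\,|\log c|^{1/2}\cdot|\log c|^{1/2}|\nabla\log c|$, with $c\le1$. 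Then $\int|\log c||\nabla c||\Delta c|\le\frac{D_i}{8}\|\Delta c\|_{L^2}^2+C\int_{\{c\le1\}}|\log c|\,c^2|\nabla\log c|^2$, and since $c^2\le1$ the last integral is bounded by $C\mathscr D_i^\eta/D_i$, which is uniformly in $L^1(0,\mathcal T_2)$. This is exactly where the new dissipation $\mathscr D_i^\eta$ is used.
\item The third term $\int|c\log c|\,|\j\Delta\log T|\,|\Delta c|$ is bounded using $|c\log c|\le C(1+c^{9/8})$, $\Delta\log T^\eta\in L^\infty_tL^4$, and $c\in L^8$ by Gagliardo--Nirenberg.
\end{itemize}

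\emph{Closing the argument.} Summing over $i$ and setting $Y=1+\sum_i\|\nabla c_i^\eta\|_{L^2}^2$, we arrive at
\[
Y'+\tfrac12\sum_iD_i\|\Delta c_i^\eta\|_{L^2}^2\le C\,Y^{k}+C\,\alpha(t)\,Y+C\sum_i\mathscr D_i^\eta ,
\]
where $\alpha=1+\|\rho^\eta\|_{H^1}^2+\|\nabla u^\eta\|_{L^2}^2$ is uniformly integrable on $[0,\mathcal T_2]$ and $k\geq 1$ is fixed. Initially $Y(0)\le 1+\sum_i\|c_{i0}\|_{H^1}^2$ because mollification does not increase norms. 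Arguing as in Lemma~\ref{newloc} and Lemma~\ref{molb} (a nonlinear Gronwall inequality with integrable forcing), we obtain $\mathcal T_0\le\mathcal T_2$, independent of $\eta$, on which $Y\le K'$. Integrating the inequality then gives the uniform $L^2H^2$ bound, using Poincar\'e together with the conserved means.
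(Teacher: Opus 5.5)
Your proposal is correct and follows essentially the same route as the paper. You test the equation against $-\Delta c_i^\eta$, split the logarithmic flux term over $\{c_i^\eta<1\}$ and $\{c_i^\eta\ge 1\}$, absorb the small-concentration part into the logarithmic dissipation $\mathscr D_i^\eta$ and the large-concentration part via polynomial growth and Ladyzhenskaya/Gagliardo--Nirenberg, and close with Lemma~\ref{newloc}.

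There are two minor points to fix. First, on $\{c\le 1\}$ Young's inequality actually produces $\int c^2|\log c|^2|\nabla\log c|^2$, so the comparison with $\mathscr D_i^\eta$ uses $\sup_{0<c\le1}c^2|\log c|<\infty$ rather than $c^2\le 1$. Second, your $\alpha(t)Y$ term folds into $CY^k$, because $\|\nabla\rho^\eta\|_{L^2}^2\lesssim Y$ and $\|\nabla u^\eta\|_{L^2}$ is uniformly bounded; Lemma~\ref{newloc} then applies exactly as stated.
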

\begin{proof}
      We multiply the $\eta$-regularized ionic concentration equation by $-\Delta c_i^{\eta}$ and we integrate over $\TT^2$. We obtain  the evolution equation  
    \be 
    \begin{aligned}
\frac{1}{2}\frac{d}{dt} \|\na c_i^{\eta}\|_{L^2}^2 + D_i \|\Delta c_i^{\eta}\|_{L^2}^2
&=  \int_{\TT^2} \mathcal{J}_{\eta} u^{\eta} \cdot \na c_i^{\eta} \Delta c_i^{\eta}  -\frac{z_iD_ie}{k_B}\int_{\TT^2} \na \cdot \left( \frac{1}{T^{\eta}} c_i^{\eta} \na \mathcal{J}_{\eta} \phi^{\eta} \right) \Delta c_i^{\eta}
\\&\quad\quad-D_i \int_{\TT^2} \na \cdot \left(\chi^\eta (c_i^{\eta}) c_i^\eta \log c_i^\eta \mathcal{J}_{\eta}  \nabla \log  T^\eta
\right)\Delta c_i^{\eta}.
\end{aligned}
    \ee
In order to estimate the logarithmic term, we apply the divergence product rule to expand the following term, 
\be 
\begin{aligned}
 \na \cdot \left(\chi^\eta (c_i^{\eta}) c_i^\eta \log c_i^\eta \mathcal{J}_{\eta}  \nabla \log  T^\eta \right)
 &= \chi^{\eta} (c_i^{\eta}) c_i^{\eta} \log c_i^{\eta} \mathcal{J}_{\eta}  \Delta \log T^{\eta} 
 \\&\quad\quad+ (\chi^{\eta})'(c_i^{\eta}) \na c_i^{\eta} c_i^{\eta} \log c_i^{\eta} \cdot \mathcal{J}_{\eta} \na \log T^{\eta}
 \\&\quad\quad\quad+ \chi^{\eta}(c_i^{\eta}) \na c_i^{\eta} \log c_i^{\eta} \cdot \mathcal{J}_{\eta} \na \log T^{\eta}
 \\&\quad\quad\quad\quad+\chi^{\eta}(c_i^{\eta}) \na c_i^{\eta}  \cdot \mathcal{J}_{\eta} \na \log T^{\eta},
 \end{aligned}
\ee then we bound the latter in $L^2$ using H\"older's inequality and the boundedness of mollifier in $L^p$ spaces (with $1 \le p \le \infty$) to obtain  
\be 
\begin{aligned}
&\| \na \cdot \left(\chi^\eta (c_i^{\eta}) c_i^\eta \log c_i^\eta \mathcal{J}_{\eta}  \nabla \log  T^\eta \right)\|_{L^2}
\\&\quad\quad\le C\|c_i^{\eta} \log c_i^{\eta}\|_{L^4} \|\Delta \log T^{\eta}\|_{L^4}
\\&\quad\quad\quad\quad+ C\|\na c_i^{\eta} \log c_i^{\eta}\|_{L^2} \|\na \log T^{\eta}\|_{L^{\infty}}
+ C\|\na c_i^{\eta}\|_{L^2} \|\na \log T^{\eta}\|_{L^{\infty}}
\\
&\quad\quad=:\mathcal{I}_1 + \mathcal{I}_2 + \mathcal{I}_3,
\end{aligned}
\ee 
where we have used the bound $|(\chi^\eta)'(c_i^\eta) c_i^\eta| \leq C$, which follows from the construction of $\chi^\eta$ given by \eqref{chi-1}.
 We define the quantity $\mathcal{A}$ to be
\be 
\mathcal{A} = \sup\limits_{0<\eta<1} \sup\limits_{0 \le t \le \mathcal{T}_2} \left[\sum\limits_{i=1}^{N} \left(\|c_i^{\eta}\|_{L^2}^2  + \|\log c_i^{\eta}\|_{L^2}^2\right) +  \|T^{\eta}\|_{H^{\frac{5}{2}}}^2 + \|u^{\eta}\|_{H^1}^2\right],
\ee where $\mathcal{T}_2$ is the local time constructed in Proposition \ref{prop:uni-est-sol-log}. 
In view of the temperature bound
\be 
\begin{aligned}
\|\Delta \log T^{\eta}\|_{L^4}
= \left\|\frac{\Delta T^{\eta}}{T^{\eta}} - \frac{|\na T^{\eta}|^2}{(T^{\eta})^2} \right\|_{L^4}
\le C\|\Delta T^{\eta}\|_{L^4} + C\|\na T^{\eta}\|_{L^8}^2
\le C(1+\|T^{\eta}\|_{H^{\frac{5}{2}}}^2) \le C(1+\mathcal{A})
\end{aligned}
\ee that holds due to the lower boundedness of the regularized temperatures by $T^*$, classical Sobolev embeddings, and Young's inequality, and the concentration bound
\be \label{wei}
\begin{aligned}
&\|c_i^{\eta} \log c_i^{\eta}\|_{L^4}
\le C(1 + \|(c_i^{\eta})^{2}\|_{L^4})
\le C(1 + \|c_i^{\eta}\|_{L^8}^2)
\le C(1+ \|c_i^{\eta}\|_{H^1}^2)
\le C(1+ \mathcal{A} + \|\na c_i^{\eta}\|_{L^2}^2)
\end{aligned}
\ee that follows from the logarithmic estimate $|x \log x| \le C(1+x^2)$ for $x \ge 0$ and the continuous embedding of $H^1$ in $L^8$ , we infer that 
\be 
\mathcal{I}_1 \le C(1+\mathcal{A})(1+ \mathcal{A} + \|\na c_i^{\eta}\|_{L^2}^2) 
\ee 
% The treatment of the term $\mathcal{I}_2$ is somehow similar. Indeed, we make use of the temperature estimate 
% \be 
% \|\na \log T^{\eta}\|_{L^{\infty}}
% \le \left\|\frac{\na T^{\eta}}{T^{\eta}}\right\|_{L^{\infty}}
% \le C\left\|\na T^{\eta}\right\|_{L^{\infty}}
% \le C\|T^{\eta}\|_{H^{\frac{5}{2}}} \le C\sqrt{\mathcal{A}},
% \ee the concentration estimate \eqref{wei}, and the Ladyzhenskaya interpolation inequality applied to the mean-free vector field $\na c_i$ to deduce that 
% \be 
% \mathcal{I}_2 \le C\sqrt{\mathcal{A}}(1+ \mathcal{A} + \|\na c_i^{\eta}\|_{L^2}^2)\|\na c_i^{\eta}\|_{L^2}^{\frac{1}{2}}\|\Delta c_i^{\eta}\|_{L^2}^{\frac{1}{2}}. 
% \ee We point out that the power of $L^2$ norm of $\Delta c_i$ does not reach 1, which allows control by the dissipation at the last stage. 
In order to estimate the term $\mathcal{I}_2$, we split the domain of integration into two parts: $\left\{c_i^{\eta} < 1 \right\}$ and $\left\{c_i^{\eta} \ge 1 \right\}$. On $\left\{c_i^{\eta} < 1 \right\}$, it holds that 
\be 
\int_{ \left\{c_i^{\eta} < 1 \right\}} |\na c_i^{\eta} \log c_i^{\eta}|^2 
= -\int_{ \left\{c_i^{\eta} < 1 \right\}} |\na c_i^{\eta}|^2 \log c_i^{\eta} \log\frac{1}{c_i^{\eta}} 
\le -\int_{ \left\{c_i^{\eta} <1 \right\}} \frac{|\na c_i^{\eta}|^2}{|c_i^{\eta}|^2} \log c_i^\eta
\ee in view of the identity $-\log c_i^{\eta} = \log \frac{1}{c_i^{\eta}}$ and the nonnegativity of the quantity $-|\na c_i^{\eta}|^2 \log c_i^{\eta}$ on the integration domain $\left\{c_i^{\eta} < 1 \right\}$. 
On $\left\{c_i^{\eta} \ge 1 \right\}$, we have 
\be 
\int_{ \left\{c_i^{\eta} \ge 1 \right\}} |\na c_i^{\eta} \log c_i^{\eta}|^2 
\le C\int_{ \left\{c_i^{\eta} \ge 1 \right\}} |\na c_i^{\eta}|^2  |c_i^{\eta}| \le C\|c_i^{\eta}\|_{L^2}\|\na c_i^{\eta}\|_{L^4}^2
\le C\sqrt{\mathcal{A}}\|\na c_i^{\eta}\|_{L^2}\|\Delta c_i^{\eta}\|_{L^2}.
\ee Consequently, the term $\mathcal{I}_2$ can be bounded by 
\be 
\mathcal{I}_2
\le C\sqrt{\mathcal{A}} \left(\mathcal{A}^{\frac{1}{4}}\|\na c_i^{\eta}\|_{L^2}^{\frac{1}{2}} \|\Delta c_i^{\eta}\|_{L^2}^{\frac{1}{2}} + \sqrt{- \int_{\left\{c_i^{\eta} <1 \right\}} \frac{|\na c_i^{\eta}|^2}{|c_i^{\eta}|^2} \log c_i^{\eta}}\right).
\ee As for the last term, we have 
\be 
\mathcal{I}_3\le C\sqrt{\mathcal{A}} \|\na c_i^{\eta}\|_{L^2}.
\ee Applying the Cauchy-Schwarz inequality followed by several applications of Young's inequality gives rise to 
\be 
\begin{aligned}
&D_i \int_{\TT^2} \na \cdot \left(\chi^\eta (c_i^{\eta}) c_i^\eta \log c_i^\eta \mathcal{J}_{\eta}  \nabla \log  T^\eta
\right)\Delta c_i^{\eta}
\\&\le D_i \|\na \cdot \left(\chi^\eta (c_i^{\eta}) c_i^\eta \log c_i^\eta \mathcal{J}_{\eta}  \nabla \log  T^\eta
\right)\|_{L^2}\|\Delta c_i^{\eta}\|_{L^2}
\\&\le \frac{D_i}{8}\|\Delta c_i^{\eta}\|_{L^2}^2
+ C(1 +\mathcal{A}^{10})(1+ \|\na c_i^{\eta}\|_{L^2}^{10})  - C\mathcal{A} \int_{\left\{c_i^{\eta} <1 \right\}} \frac{|\na c_i^{\eta}|^2}{|c_i^{\eta}|^2} \log c_i^{\eta}.
\end{aligned}
\ee The estimates for the advection and electromigration terms are more classical. In fact, using H\"older, Ladyzhenskaya, and Young inequalities, we have 
\be 
\beg{aligned}
&\left|\int_{\TT^2} \mathcal{J}_{\eta} u^{\eta} \cdot \na c_i^{\eta} \Delta c_i^{\eta}\right| 
\le C\|u^{\eta}\|_{L^4}\|\na c_i^{\eta}\|_{L^4} \|\Delta c_i^{\eta}\|_{L^2}
\\&\le \frac{D_i}{8}\|\Delta c_i^{\eta}\|_{L^2}^2 + C\|u^{\eta}\|_{H^1}^4 \|\na c_i^{\eta}\|_{L^2}^2
\le \frac{D_i}{8}\|\Delta c_i^{\eta}\|_{L^2}^2 + C\mathcal{A}^2\|\na c_i^{\eta}\|_{L^2}^2 
\end{aligned}
\ee and 
\be 
\beg{aligned}
&\left|\frac{z_iD_ie}{k_B}\int_{\TT^2} \na \cdot \left( \frac{1}{T^{\eta}} c_i^{\eta} \na \mathcal{J}_{\eta} \phi^{\eta} \right) \Delta c_i^{\eta}\right|
\\&\le \frac{D_i}{8} \|\Delta c_i^{\eta}\|_{L^2}^2 + C\|\na \phi^{\eta}\|_{L^{\infty}}^2\|c_i^{\eta}\|_{L^2}^2\|\na T^{\eta}\|_{L^{\infty}}^2 + C\|\na \phi^{\eta}\|_{L^{\infty}}^2\|\na c_i^{\eta}\|_{L^2}^2 + C\|c_i^{\eta}\|_{L^4}^2\|\Delta \phi^{\eta}\|_{L^4}^2
\\&\le \frac{D_i}{8} \|\Delta c_i^{\eta}\|_{L^2}^2 + C\mathcal{A}^2 \sum\limits_{j=1}^{N}\|\na c_j^{\eta}\|_{L^2}^2 + C\|\na c_i^{\eta}\|_{L^2}^2 \sum\limits_{j=1}^{N} \|\na c_j^{\eta}\|_{L^2}^2 + C\left(\mathcal{A} + \|\na c_i^{\eta}\|_{L^2}^2\right)  \sum\limits_{j=1}^{N}\|\na c_j^{\eta}\|_{L^2}^2.
\end{aligned}
\ee Putting all these estimates together, we obtain the differential inequality 
\be 
\frac{d}{dt}\|\na c_i^{\eta}\|_{L^2}^2 + \|\Delta c_i^{\eta}\|_{L^2}^2 
\le  C\left(\mathcal{A}^{10} + 1 \right)\left(\sum\limits_{j=1}^{N} \|\na c_j^{\eta}\|_{L^2}^{10}+1\right)
 - C\mathcal{A} \int_{\left\{c_i^{\eta} <1 \right\}} \frac{|\na c_i^{\eta}|^2}{|c_i^{\eta}|^2} \log c_i^{\eta}.
 \ee For each $t \ge 0$ and $\eta > 0$, we consider the instantaneous energies 
 \be 
\mathcal{E}^{\eta}(t) = \sum\limits_{i=1}^{N} \|\na c_i^{\eta}\|_{L^2}^2 + \mathcal{A}^2 + 1
 \ee and 
 \be 
 \mathscr{D}^{\eta}(t) = -\sum\limits_{i=1}^{N} \int_{\left\{c_i^{\eta} <1 \right\}} \frac{|\na c_i^{\eta}|^2}{|c_i^{\eta}|^2} \log c_i^{\eta}.
 \ee Then it holds that  
 \be 
\frac{d\mathcal{E}^{\eta}}{dt}
\le C (\mathcal{E}^{\eta})^{10}+ C \mathcal{A} \mathscr{D}^{\eta}. 
 \ee Applying Lemma~\ref{newloc} and using the uniform boundedness of $\mathscr D^{\eta}$ in $L^1(0, \mathcal{T}_2)$ that comes from Proposition~\ref{prop:uni-est-sol-log}, we deduce the existence of a time $\mathcal{T}_0 \leq \mathcal T_2$ depending only on the initial data (and independent of $\eta$) such that $c_i^{\eta} \in L^{\infty}(0, \mathcal{T}_0; H^1) \cap L^2(0,\mathcal{T}_0 ;H^2)$.
\end{proof}

{\bf Step 4.}
To conclude this section, we will improve \eqref{eqn:non-uni-lower-bound} to a lower bound that is independent of $\eta$ thanks to the uniform bound obtained in Proposition~\ref{prop:uni-est-sol-cihigh}.
\begin{prop}\label{prop:uniform-positive-bound}
 Suppose that the assumptions of Proposition~\ref{prop:uni-est-sol-log} hold. Let $\mathcal T_0$ be the time appearing in Proposition~\ref{prop:uni-est-sol-cihigh}. Then there exists a constant $a>0$, independent of $\eta$, such that $c_i^\eta\geq a >0$ for $t\in[0,\mathcal T_0]$ and a.e. $x\in\mathbb T^2$.
\end{prop}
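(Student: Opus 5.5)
We apply Proposition~\ref{prop:pos-lower-bdd} to each $c_i^\eta$ on $[0,\mathcal T_0]$, with the choices \eqref{choice-positivity}:
\[
C_1\mathcal F = D_i\frac{z_i e}{k_B T^\eta}\nabla\j\phi^\eta,\qquad C_2\mathcal G = D_i\j\nabla\log T^\eta,\qquad C_3=0 .
\]
Since $c_{i0}^\eta=\j c_{i0}\ge a_0$, the proposition gives
\[
c_i^\eta \ge \min\left\{\frac{a_0}{2},\, \exp\left\{1-\left(1+\log\frac{2}{a_0}\right)e^{\mathcal O_\eta}\right\}\right\}.
\]
The whole task is therefore to bound $\mathcal O_\eta$ uniformly in $\eta$. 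With $C_3=0$ this quantity reduces to
\[
\mathcal O_\eta=\int_0^{\mathcal T_0}\left( C\left\|\nabla\cdot\Big(\frac{\nabla\j\phi^\eta}{T^\eta}\Big)\right\|_{L^\infty}+C\|\j\Delta\log T^\eta\|_{L^\infty}\right)dt .
\]
For Step 2 we only had an $\eta$-dependent bound on $\mathcal O_\eta$. The new input now is Proposition~\ref{prop:uni-est-sol-cihigh}, which gives $c_i^\eta$ uniformly bounded in $L^2(0,\mathcal T_0;H^2)$.

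\textbf{Temperature term.} We use $\|\j f\|_{L^\infty}\le\|f\|_{L^\infty}$, the lower bound $T^\eta\ge T^*$, and the identity $\Delta\log T^\eta=\Delta T^\eta/T^\eta-|\nabla T^\eta|^2/(T^\eta)^2$. With the embedding $H^{1+s}\subset L^\infty$ for $s>0$, this gives
\[
\|\j\Delta\log T^\eta\|_{L^\infty}\le C\|T^\eta\|_{H^{7/2}}+C\|T^\eta\|_{H^{5/2}}^2 .
\]
This is integrable on $[0,\mathcal T_0]$, uniformly in $\eta$, by Proposition~\ref{prop:uni-est-sol}. (This is exactly why the $H^{5/2}\cap L^2H^{7/2}$ control of the temperature was needed.)

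\textbf{Electric term.} We expand
\[
\nabla\cdot\Big(\frac{\nabla\j\phi^\eta}{T^\eta}\Big)=\frac{\j\Delta\phi^\eta}{T^\eta}-\frac{\nabla T^\eta\cdot\nabla\j\phi^\eta}{(T^\eta)^2},
\qquad \Delta\phi^\eta=-\rho^\eta/\varepsilon .
\]
The first piece is bounded by $C\|\rho^\eta\|_{L^\infty}\le C\|\rho^\eta\|_{H^2}$, which lies in $L^2_t$ and hence in $L^1(0,\mathcal T_0)$ uniformly in $\eta$, by Proposition~\ref{prop:uni-est-sol-cihigh}. In fact the $L^\infty_tH^1$ bound with Brezis--Gallouet, or simply $H^2\subset L^\infty$, is enough. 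For the second piece we use $\|\nabla\phi^\eta\|_{L^\infty}\le C\|\rho^\eta\|_{H^1}$, by elliptic regularity and $W^{1,p}\subset L^\infty$ for $p>2$, together with $\|\nabla T^\eta\|_{L^\infty}\le C\|T^\eta\|_{H^{5/2}}$. Both factors are uniformly bounded in $L^\infty_t$.

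\textbf{Conclusion and main obstacle.} Combining the two estimates gives $\sup_\eta\mathcal O_\eta\le\mathcal O^*<\infty$. We then define $a$ by the right-hand side of the displayed lower bound with $\mathcal O_\eta$ replaced by $\mathcal O^*$. This $a$ is independent of $\eta$ and positive, since the lower bound is monotone decreasing in $\mathcal O$. The main point to check is the $L^\infty$ control of $\Delta\phi^\eta$, i.e.\ of $\rho^\eta$. The $L^\infty_tL^2$ bound from Step 1 is insufficient for this, and it is precisely the $L^2_tH^2$ (or $L^\infty_tH^1$) bound on $c_i^\eta$ from Step 3 that closes the argument.
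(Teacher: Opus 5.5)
Your proposal is correct and follows the paper's own route: you apply Proposition~\ref{prop:pos-lower-bdd} with the choice \eqref{choice-positivity} and bound $\mathcal O$ uniformly in $\eta$, using the $L^2_tH^2$ bound on $c_i^\eta$ from Proposition~\ref{prop:uni-est-sol-cihigh} for the electric term and the $L^2_tH^{7/2}$ bound on $T^\eta$ for the temperature term (the paper only cites the concentration bound explicitly and defers the temperature requirement to Remark~\ref{rem:T}). One small caveat concerns your parenthetical alternative: the $L^\infty_tH^1$ bound alone does not control $\|\rho^\eta\|_{L^\infty}$ in two dimensions, since Brezis--Gallou\"et still involves the $H^2$ norm, so the $L^2_tH^2\subset L^1_tL^\infty$ argument you actually use is the one that closes the proof.
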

\begin{proof}
 Similarly to the proof of \eqref{eqn:non-uni-lower-bound}, we match \eqref{eqeq} and \eqref{N-NPNS-mo-1} and observe that $c_i^\eta$ satisfies \eqref{eqeq} with \eqref{choice-positivity}. Compared to the proof of \eqref{eqn:non-uni-lower-bound} where we do not have the uniform-in-$\eta$ bound of $\int_0^{\mathcal T_0} \|\nabla\cdot \mathcal F\|_{L^\infty} ds$, here we have it thanks to the uniform-in-$\eta$ bound of $c_i^\eta$ in $L^\infty(0,\mathcal T_0, H^1)\cap L^2(0,\mathcal T_0, H^2)$. Consequently, the bound in \eqref{positive-condition} holds uniformly-in-$\eta$, and thus the lower bound $a>0$ is independent of $\eta$.
\end{proof}

\begin{rem}[Choice of $H^{\frac52}$ for $T^\eta$.]\label{rem:T}
    From the proof of Proposition~\ref{prop:uniform-positive-bound}, we see that one needs $\|\Delta T^\eta\|_{L^\infty}$ to be integrable in time with a uniform-in-$\eta$ bound. This suggests one needs to do $H^{2+}$ estimate for $T$, and for simplicity we consider $H^{\frac52}$.
\end{rem}

\section{Local well-posedness} \label{sec5}
In this section, we prove Theorem~\ref{thm:lwp-intro}, concerning the local well-posedness of the non-isothermal NPNS system \eqref{N-NPNS-system-original}. We denote by $\mathcal{T}_0$ the time constructed in Proposition \ref{prop:uni-est-sol-cihigh}.

We divide the proof into two parts: existence of solutions and uniqueness. 

\subsection{Existence of solutions} In order to prove the existence of solutions, we make use of the Aubin-Lions compactness theorem to pass to the limit. 
With the uniform bounds of $(c_i^\eta, u^\eta, T^\eta)$ in Theorem~\ref{thm:uniform-bound}, it is straightforward to show that $\partial_t c^\eta_i$, $\partial_t u^\eta$, and $\partial_t T^\eta$ are uniformly bounded in $L^2(0,\mathcal T_0; L^2)$.

By virtue of the uniform-in-$\eta$ boundedness of the family of solutions $(c_i^\eta, u^\eta, T^\eta)$ and their time derivatives $(\partial_t c_i^\eta, \partial_t u^\eta, \partial_t T^\eta)$, we can apply the Aubin-Lions and Banach-Alaoglu theorems to obtain the existence of a subsequence $\{c_i^{\eta_k},u^{\eta_k},T^{\eta_k}\}$ converging to a limit $\{c_i,u,T\}$ that satisfies the desired regularity \eqref{lwp-regularity}, and the convergence happens in the following sense:
\begin{align}
        &u^{\eta_k} \rightharpoonup u \,\, \text{in} \,\, L^2(0,\mathcal T_0; \mathcal D(A)),\quad  u^{\eta_k}\rightarrow u \,\, \text{in} \,\, L^2(0,\mathcal T_0; V),
        \\
        &T^{\eta_k} \rightharpoonup T \,\, \text{in} \,\, L^2(0,\mathcal T_0; H^{\frac72}),\quad T^{\eta_k}\rightarrow T \,\, \text{in} \,\, L^2(0,\mathcal T_0; H^{3}),
        \\
        &c_i^{\eta_k} \rightharpoonup c_i \,\, \text{in} \,\, 
        L^2(0,\mathcal T_0; H^2),\quad c_i^{\eta_k}\rightarrow c_i \,\, \text{in} \,\, L^2(0,\mathcal T_0; H^1).
     \end{align}
Recall that $T^{\eta_k}\geq T^*>0$ and $c_i^{\eta_k}\geq a >0$, uniformly in $\eta_k$. This together with the strong convergence of $T^{\eta_k}$ to $T$ and $c_i^{\eta_k}$ to $c_i$ imply that $T\geq T^*>0$ and $c_i\geq a >0$ on $[0, \mathcal{T}_0]$. Since $u^{\eta_k}$ has zero mean, it follows that $u$ has zero mean.

The rest of the proof follows the proof of \cite[Theorem 2.1]{abdo2024three}, and the only technical part we need to address is the convergence of the  $c_i \log c_i$ nonlinear term in the evolution of $c_i$. Indeed, we have stronger regularity here, thus the convergence of the remaining terms should be easier. To be more specific, we want to show that for any test function $\psi\in C_c^\infty ([0,\mathcal T_0)\times \TT^2)$,
     \begin{align}\label{convergence-LWP}
         \int_0^{\mathcal T_0} \left\langle \chi^{\eta_k}(c_i^{\eta_k})c_i^{\eta_k} \log c^{\eta_k}_i \nabla \jk \log T^{\eta_k}, \nabla \psi\right\rangle  dt \to \int_0^{\mathcal T_0} \left\langle c_i \log c_i \nabla \log T, \nabla \psi\right\rangle dt
     \end{align}
     as $k \to \infty$. We first notice that as $\lim_{k\to \infty} \eta_k = 0$, there exists some $K\in\mathbb N$ such that for all $k\geq K$ we have $\eta_k \leq \frac{a}{2}$. Then for $k\geq K$ we have $c_i^{\eta_k} \geq a \geq 2{\eta_k}$, and thus $\chi^{\eta_k}(c_i^{\eta_k}) =1$. Therefore, for $k\geq K$, it holds that
     \begin{align*}
         &\left|\left\langle \chi^{\eta_k}(c_i^{\eta_k})c_i^{\eta_k} \log c^{\eta_k}_i \nabla \jk \log T^{\eta_k}-  c_i \log c_i \nabla \log T, \nabla \psi\right\rangle \right|
         \\
         = &\left|\left\langle c_i^{\eta_k} \log c^{\eta_k}_i \nabla \jk \log T^{\eta_k}-  c_i \log c_i \nabla \log T, \nabla \psi\right\rangle \right|
         \\
         \leq &\left|\left\langle (c_i^{\eta_k} - c_i) \log c^{\eta_k}_i \nabla \jk \log T^{\eta_k}, \nabla \psi\right\rangle \right| + \left|\left\langle c_i (\log c_i^{\eta_k} - \log c_i) \nabla \jk \log T^{\eta_k}, \nabla \psi\right\rangle \right|
         \\
         &+\left|\left\langle c_i \log c_i \nabla \jk  (\log T^{\eta_k} - \log T), \nabla \psi\right\rangle \right| + 
         \left|\left\langle c_i \log c_i \nabla ( \jk \log T - \log T), \nabla \psi\right\rangle \right| 
         \\
         := &I_1 + I_2 + I_3+I_4.
     \end{align*}
      By the H\"older and Sobolev inequalities, we can bound each as follows:
      \begin{align*}
          I_1 \leq &\|c_i^{\eta_k}-c_i\|_{L^4} \|\log c_i^{\eta_k}\|_{L^2} \|\nabla \log T^{\eta_k}\|_{L^4} \|\nabla \psi\|_{L^\infty} 
          \\
          \leq &C \|c_i^{\eta_k}-c_i\|_{H^1}\|\log c_i^{\eta_k}\|_{L^2} \|\frac1{T^{\eta_k}}\|_{L^\infty} \|T^{\eta_k}\|_{H^2} \|\nabla \psi\|_{L^\infty},
          \\
          I_2 \leq & \|c_i\|_{L^4} \| \log c_i^{\eta_k} - \log c_i\|_{L^2} \|\nabla \log T^{\eta_k}\|_{L^4} \|\nabla \psi\|_{L^\infty} 
          \\
          \leq & C \|c_i\|_{H^1} \sup_{\xi\geq a}| \log'(\xi)| \|c_i^{\eta_k} - c_i\|_{L^2} \|\frac1{T^{\eta_k}}\|_{L^\infty} \|T^{\eta_k}\|_{H^2} \|\nabla \psi\|_{L^\infty},
          \\
          I_3 \leq & \|c_i\|_{L^4} \|\log c_i\|_{L^4} \|\frac{\nabla T^{\eta_k}}{T^{\eta_k}} - \frac{\nabla T}{T}\|_{L^2} \|\nabla \psi\|_{L^\infty} 
          \\
          \leq &C\|c_i\|_{H^1} (|\log a|+\|c_i\|_{L^2})(\|\frac1{T^{\eta_k}}\|_{L^\infty} \|\nabla T^{\eta_k} -\nabla T\|_{L^2} +  \|\frac1{T}\|_{L^\infty} \|\nabla T\|_{H^{\frac32}} \|T-T^{\eta_k}\|_{L^2})\|\nabla\psi\|_{L^\infty} , 
          \\
          I_4 \leq & C \|c_i\|_{L^4} \|\log c_i\|_{L^4} \|(\jk-I) \nabla \log T\|_{L^2} \|\nabla\psi\|_{L^\infty}
          \\
          \leq & C\|c_i\|_{H^1} (|\log a|+\|c_i\|_{L^2})\|(\jk-I) \nabla \log T\|_{L^2} \|\nabla\psi\|_{L^\infty},
      \end{align*}
      where in $I_2$ we have used the fact that both $c_i^{\eta_k}$ and $c_i$ are bounded below by $a>0$. Using the strong convergence of $c_i^{\eta_k}\to c_i$ and $T^{\eta_k}\to T$, the uniform bounds of $c_i^{\eta_k}$ and $T^{\eta_k}$, the regularity of $c_i$ and $T$, the lower boundedness of $T^{\eta_k}$ and $T$, and the convergence property of $\jk$ together with the fact that $\nabla\log T\in L^2$, we can conclude that 
      \[
      \int_0^{\mathcal T_0} (I_1+I_2+I_3+I_4)(t) dt \to 0 \quad \text{as}\quad k\to \infty.
      \]
      This finishes the proof of \eqref{convergence-LWP}.

\subsection{Uniqueness} Assume there are two solutions $(c_{i,1}, u_1, T_1)$ and $(c_{i,2}, u_2, T_2)$ to system \eqref{N-NPNS-system-original} satisfying \eqref{lwp-regularity}, with initial conditions $(c_{i0,1}, u_{01}, T_{01})$ and $(c_{i0,2}, u_{02}, T_{02})$. Suppose that $c_{i0,j}\geq c>0$ for $j=1,2$. Let $\phi_j$ and $\rho_j$, $j=1,2$, be the corresponding potential and charge density to each solution, i.e., $\rho_j = \sum_{i=1}^N z_i e c_{i,j}$ and $- \varepsilon \Delta \phi_j= \rho_j$. Denote by $(c_i,u,T) = (c_{i,1} - c_{i,2}, u_1 - u_2, T_1 - T_2)$, and let $\rho = \sum_{i=1}^N z_i e c_i$ and $- \varepsilon \Delta \phi= \rho$.
    We have
    \begin{subequations}\label{N-NPNS-system-uniqueness}
\begin{align}
&\partial_t c_i + u_1\cdot\nabla c_i + u\cdot \nabla c_{i,2}
=
D_i\nabla \cdot
\Big(
 \nabla c_i
+
 \frac{z_i e}{k_B } \left(c_i \frac{1}{T_1}\nabla \phi_1 + c_{i,2}\frac1{T_1} \nabla \phi + c_{i,2} \nabla \phi_2 \frac{-T}{T_1T_2}\right)\nonumber
\\
&\hspace{4cm}+(c_{i,1} \log c_{i,1} - c_{i,2} \log c_{i,2})\nabla \log T_1 + c_{i,2} \log c_{i,2}\nabla \log\frac{T_1}{T_2}
\Big), \label{N-NPNS-system-original-1-uniqueness}
 \\
&\partial_t u + u_1\cdot\nabla u + u\cdot\nabla u_2+ \nabla p - \nu \Delta u = -\rho_1\nabla \phi - \rho\nabla\phi_2 + g\alpha_T(T-\overline{T_0}) \vec{k}, \label{N-NPNS-system-original-2-uniqueness}
\\
&\nabla \cdot u = 0, \label{N-NPNS-system-original-3-uniqueness}
\\
&\partial_t T + u_1\cdot\nabla T + u\cdot\nabla T_2- \kappa \Delta T
= 0, \label{N-NPNS-system-original-4-uniqueness}
\end{align}
\end{subequations}
with initial conditions $(c_{i0}, u_0, T_0) = (c_{i0,1}-c_{i0,2}, u_{01}-u_{02}, T_{01}-T_{02})$. Thanks to the regularity of $(c_{i,j}, u_j, T_j)$ and $(\partial_t c_{i,j},\partial_t u_j, \partial_t T_j)$, we can take $L^2$ inner product of \eqref{N-NPNS-system-original-1-uniqueness} with $c_i$, \eqref{N-NPNS-system-original-2-uniqueness} with $u$, and  \eqref{N-NPNS-system-original-4-uniqueness} with $T$ and $-\Delta T$. For $c_i$, we obtain the following
\begin{align*}
    \frac12 \frac{d}{dt} \|c_i\|_{L^2}^2 + D_i \|\nabla c_i\|_{L^2}^2 = &- \langle u\cdot \nabla c_{i,2}, c_i\rangle - D_i \Big\langle \frac{z_i e}{k_B } \left(c_i \frac{1}{T_1}\nabla \phi_1 + c_{i,2}\frac1{T_1} \nabla \phi + c_{i,2} \nabla \phi_2 \frac{-T}{T_1T_2}\right), \nabla c_i\Big\rangle
    \\
    &- D_i\Big\langle  c_i \log c_{i,1} \nabla \log T_1 + c_{i,2} \log\frac{c_{i,1}}{c_{i,2}} \nabla \log T_1 + c_{i,2} \log c_{i,2}\nabla \log\frac{T_1}{T_2} , \nabla c_i\Big\rangle.
\end{align*}
By the H\"older, Sobolev, and Gagliardo-Nirenberg interpolation inequalities, we estimate term by term as follows:
\begin{align*}
    &\langle u\cdot \nabla c_{i,2}, c_i\rangle \leq C\|u\|_{L^4} \|\nabla c_{i,2}\|_{L^2} \|c_i\|_{L^4} \leq C\|\nabla c_{i,2}\|_{L^2}\|u\|_{L^2}^{\frac12}\|\nabla u\|_{L^2}^{\frac12} \|c_i\|_{L^2}^{\frac12} \|c_i\|_{H^1}^{\frac12},
    \\
    &D_i\langle \frac{z_i e}{k_B }c_i \frac{1}{T_1}\nabla \phi_1, \nabla c_i\rangle \leq C\|c_i\|_{L^2}\|\nabla\phi_1\|_{L^\infty} \|\nabla c_i\|_{L^2} \leq
    C\sum_{k=1}^N\|\nabla c_{k,1}\|_{L^2}\|c_i\|_{L^2} \|\nabla c_i\|_{L^2},
    \\
    & D_i\langle \frac{z_i e}{k_B }c_{i,2}\frac1{T_1} \nabla \phi, \nabla c_i\rangle \leq C\|c_{i,2}\|_{L^4}\|\nabla \phi\|_{L^4} \|\nabla c_i\|_{L^2}\leq C\|c_{i,2}\|_{H^1}\sum_{k=1}^N\|c_k\|_{L^2}\|\nabla c_i\|_{L^2},
    \\
     &D_i\langle \frac{z_i e}{k_B } c_{i,2}\nabla\phi_2\frac{T}{T_1T_2},\nabla c_i\rangle \leq C\|c_{i,2}\|_{L^4} \|\nabla \phi_2\|_{L^\infty}\|T\|_{L^4} \|\nabla c_i\|_{L^2} 
     \\&\hspace{4.3cm}\leq C \|c_{i,2}\|_{H^1}\sum_{k=1}^N  \|c_{k,2}\|_{H^1} \|T\|_{H^1}\|\nabla c_i\|_{L^2}.
\end{align*}
For the logarithmic terms, we recall that as $c_{i0,j}\geq c>0$ for $j=1,2$, one has $c_{i,j}\geq a >0$. For the function $f(x) = \log(x)$ with $x\geq a$, we have $|f'|\leq \frac1a$. Therefore,
\begin{align*}
    D_i\Big\langle  c_i \log c_{i,1} \nabla \log T_1, \nabla c_i\Big\rangle &\leq C\|c_i\|_{L^4} \|\log c_{i,1}\|_{L^4} \|\nabla \log T_1\|_{L^\infty} \|\nabla c_i\|_{L^2} 
    \\
    &\leq C\|c_i\|_{L^2}^{\frac12} \|c_i\|_{H^1}^{\frac12} \|\nabla c_i\|_{L^2} (1+\|c_{i,1}\|_{L^2}) \|T_1\|_{H^{\frac52}},
    \\
    D_i\Big\langle c_{i,2} \log\frac{c_{i,1}}{c_{i,2}} \nabla \log T_1  , \nabla c_i\Big\rangle &\leq C\|c_{i,2}\|_{L^4} \frac1a \|c_i\|_{L^4} \|\nabla \log T_1\|_{L^\infty} \|\nabla c_i\|_{L^2}
    \\
    &\leq C\|c_i\|_{L^2}^{\frac12} \|c_i\|_{H^1}^{\frac12} \|\nabla c_i\|_{L^2}\|c_{i,2}\|_{H^1}\|T_1\|_{H^{\frac52}},
    \\
    D_i\Big\langle   c_{i,2} \log c_{i,2}\nabla \log\frac{T_1}{T_2} , \nabla c_i\Big\rangle & = D_i \Big\langle   c_{i,2} \log c_{i,2} \Big(\frac{\nabla T}{T_1} - \frac{T\nabla T_2}{T_1T_2} \Big)  , \nabla c_i\Big\rangle 
    \\
    &\leq C \|c_{i,2}\|_{L^8} \|\log c_{i,2}\|_{L^8} (\|\nabla T\|_{L^4} + \|T\|_{L^4} \|\nabla T_2\|_{L^\infty}) \|\nabla c_i\|_{L^2}
    \\
    &\leq C \|c_{i,2}\|_{H^1} (1+ \|c_{i,2}\|_{L^2}) (\|\nabla T\|_{L^2}^{\frac12} \|\Delta T\|_{L^2}^{\frac12} + \|T\|_{H^1} \|T_2\|_{H^{\frac52}}) \|\nabla c_i\|_{L^2}. 
\end{align*}

For $u$, we have
\begin{align*}
    \frac12 \frac{d}{dt} \|u\|_{L^2}^2 + \nu \|\nabla u\|_{L^2}^2 = -\langle u\cdot\nabla u_2, u\rangle  - \langle \rho_1 \nabla \phi, u\rangle - \langle \rho\nabla \phi_2 , u\rangle + g\alpha_T \langle (T-\overline{T_0})\vec{k}, u\rangle.
\end{align*}
Applications of the H\"older and Sobolev inequalities and classical interpolation yield the following:
\begin{align*}
   &\langle u\cdot\nabla u_2, u\rangle \leq C\|u\|_{L^4}^2 \|\nabla u_2\|_{L^2} \leq C\|u\|_{L^2} \|\nabla u\|_{L^2} \|\nabla u_2\|_{L^2},
   \\
   &\langle \rho_1 \nabla \phi,u\rangle \leq C \|\rho_1\|_{L^4} \|\nabla\phi\|_{L^4} \|u\|_{L^2} \leq C\sum_{k=1}^N \|c_{k,1}\|_{H^1} \sum_{k=1}^N \|c_k\|_{L^2} \|u\|_{L^2},
   \\
   &\langle \rho \nabla \phi_2 ,u\rangle \leq C \|\rho\|_{L^2} \|\nabla\phi_2\|_{L^\infty} \|u\|_{L^2} \leq C\sum_{k=1}^N \|c_k\|_{L^2} \sum_{k=1}^N \|c_{k,2}\|_{H^1}  \|u\|_{L^2},
   \\
   &g\alpha_T \langle (T-\overline{T_0})\vec{k}, u\rangle \leq C\|\nabla T\|_{L^2} \|u\|_{L^2}.
\end{align*}

For $T$, we have
\begin{align*}
    \frac12 \frac{d}{dt} \|T\|_{L^2}^2 + \kappa \|\nabla T\|_{L^2}^2 = -\langle u\cdot \nabla T_2, T\rangle \leq C\|\nabla T_2\|_{L^\infty} \|u\|_{L^2} \|T\|_{L^2} \leq C\|T_2\|_{H^{\frac52}}\|u\|_{L^2} \|T\|_{L^2},
\end{align*}
and
\begin{align*}
    \frac12 \frac{d}{dt} \|\nabla T\|_{L^2}^2 + \kappa \|\Delta T\|_{L^2}^2 = &\langle u_1 \cdot\nabla T, \Delta T\rangle + \langle u\cdot\nabla T_2, \Delta T\rangle
    \\
    \leq &C\|u_1\|_{L^4} \|\nabla T\|_{L^4} \|\Delta T\|_{L^2} + C\|u\|_{L^2} \|\nabla T_2\|_{L^\infty} \|\Delta T\|_{L^2} 
    \\
    \leq &C \|u_1\|_{H^1} \|\nabla T\|_{L^2}^{\frac12}\|\Delta T\|_{L^2}^{\frac32} + C\|T_2\|_{H^{\frac52}}\|u\|_{L^2} \|\Delta T\|_{L^2} .
\end{align*}

Denoting by 
\[
F(t) := \sum_{i=1}^N \|c_i(t)\|_{L^2}^2 + \|u(t)\|_{L^2}^2 + \|T(t)\|_{H^1}^2,
\]
combining the above energy estimates, and using Young's inequality,  we deduce that 
\begin{align*}
    \frac{d}{dt} F \leq C\left(1+ \sum_{i=1}^N (\|c_{i,1}\|_{H^1}^8+\|c_{i,2}\|_{H^1}^8) + \|T_1\|_{H^{\frac52}}^8 + \|T_2\|_{H^{\frac52}}^8 + \|u_1\|_{H^1}^4 + \|u_2\|_{H^1}^4 \right) F.
\end{align*}
Thanks to the regularity properties obeyed by $c_{i,j}, T_j, u_j$ for $j=1,2$ from \eqref{lwp-regularity}, we infer that the time function
\[
 k(t) := 1+ \sum_{i=1}^N (\|c_{i,1}(t)\|_{H^1}^8+\|c_{i,2}(t)\|_{H^1}^8) + \|T_1(t)\|_{H^{\frac52}}^8 + \|T_2(t)\|_{H^{\frac52}}^8 + \|u_1(t)\|_{H^1}^4 + \|u_2(t)\|_{H^1}^4,
\]
is bounded in $L^\infty(0,\mathcal T_0)$. Therefore, by Gr\"onwall's inequality, we  conclude that 
\begin{align*}
    F(t) \leq F(0) \exp\left(t\sup_{t\in(0,\mathcal T_0)} k(t)\right)
\end{align*}
for all $t\in (0,\mathcal T_0)$. This implies the uniqueness and stability of solutions.

\section{Extension of the Local Solution} \label{sec6}

In this section, we prove Theorem~\ref{thm:gwp-intro}, i.e., any local smooth solution can be extended to any time interval $[0,\mathcal{T}]$ provided that $\|T_0-T_r\|_{L^\infty}$ is sufficiently small. The proof is divided into 7 major steps.

% \begin{Thm}
% Let $c_{i0} \in H^1, u_0 \in V, T_0 \in H^{\frac{5}{2}}$ with $T_0 \ge T^* > 0$ and $c_{i0} \ge a_0 >0$ for any $i \in \left\{1, \dots, N\right\}$. Let $(u, c_i, T)$ be a smooth solution of the non-isothermal NPNS system on some time interval $[0, \mathcal{T}]$. If $T_0 - T_r$ is sufficiently small in $L^{\infty}$, then there is a constant $\Gamma_0$ depending only on the $H^1$ norms of the initial concentrations and velocity, the $H^{\frac{5}{2}}$ norm of the initial temperature, the parameters of the system, and some universal constants such that 
% \be 
% \sum\limits_{i=1}^{N}\|c_i(t)\|_{H^1} + \|T(t)-T_r\|_{H^{\frac{5}{2}}} + \|u(t)\|_{H^1} \le \Gamma_0
% \ee for any $t \in [0, \mathcal{T}]$. 
% \end{Thm}

\medskip
{\bf{Step 1. Local Behavior.}} We consider the entropic contribution in the Helmholtz free energy $\mathcal{F}$:
\begin{equation}
\sum\limits_{i=1}^{N} \int_{\TT^2} k_BT(c_i \log c_i - c_i + 1) dx =: \sum\limits_{i=1}^{N} \int_{\TT^2} k_BTE_i dx,  
\end{equation}
where $E_i = c_i \log c_i - c_i + 1$. Let $\mathcal{Q}$ be the  energy defined by 
\begin{equation}
\mathcal{Q} = 1+\frac{\varepsilon}{2} \|\na \phi\|_{L^2}^2 + \sum\limits_{i=1}^{N} \int_{\TT^2} k_BTE_i  dx  + \frac{1}{2} \|u\|_{L^2}^2. 
\end{equation} By continuity, there exists a time $t_0>0$ such that $\|\Lambda^{\frac{3}{2}} T\|_{L^2}^2 \le \|\Lambda^{\frac{3}{2}}T_0\|_{L^2}^2 + 1$ and $\mathcal{Q} \le 2\mathcal{Q}_0$ for all $t \in [0,t_0]$. Our goal is to prove that the latter bounds hold at times $t \in [t_0, \mathcal{T}]$. 

\medskip
{\bf{Step 2. Uniform bounds for $\mathcal{Q}$.}} 
We multiply \eqref{N-NPNS-system-original-1} by $T \log c_i$, integrate over $\TT^2$, and obtain the energy evolution
\begin{equation}
\begin{aligned}
\frac{d}{dt} \int_{\TT^2} TE_i dx
&= \int_{\TT^2} (\pa_t T) E_i dx + \int_{\TT^2} T (\pa_t c_i) \log c_i dx
\\&= \int_{\TT^2} (- u \cdot \na T + \kappa\Delta T) E_i dx + \int_{\TT^2} \pa_t c_i T \log c_i dx,
\end{aligned}
\end{equation}
yielding
\begin{equation}\la{com1}
\int_{\TT^2}\pa_t c_i T \log c_i dx
= \frac{d}{dt} \int_{\TT^2}TE_i dx
+ \int_{\TT^2}( u \cdot \na T - \kappa\Delta T)E_i dx. 
\end{equation}
To deal with the advection term, we integrate by parts using the divergence-free condition obeyed by $ u$ and obtain the identity
\begin{equation}\la{com2}
\beg{aligned}
\int_{\TT^2}( u \cdot \na c_i) T \log c_i dx
= - \int_{\TT^2} uc_i \cdot \na T \log c_i dx - \int_{\TT^2} u T \cdot \na c_i dx.
\end{aligned}
\end{equation}Adding \eqref{com1} and \eqref{com2}
gives rise to 
\begin{equation}
\begin{aligned}
\int_{\TT^2}(\pa_t c_i +  u \cdot \na c_i) T \log c_i dx
&= \frac{d}{dt} \int_{\TT^2}TE_i dx - \int_{\TT^2}\kappa\Delta T E_i dx - \int_{\TT^2}( u \cdot \na T) c_i dx - \int_{\TT^2}( u \cdot \na c_i) T dx.
\end{aligned}
\end{equation}Another integration by parts shows that 
\begin{equation}
- \int_{\TT^2}( u \cdot \na T) c_i dx - \int_{\TT^2}( u \cdot \na c_i) T dx= 0,
\end{equation}
and consequently,
\begin{equation}\label{gwp-eqn1}
\begin{split}
    \frac{d}{dt} \int_{\TT^2}TE_i dx- \int_{\TT^2}\kappa\Delta T E_i dx&=\int_{\TT^2}(\pa_t c_i +  u \cdot \na c_i) T \log c_i dx
\\
&= \int_{\TT^2}D_i \na \cdot \left(\na c_i + \frac{z_i e}{k_B T} c_i \nabla   \phi
+
c_i \log c_i \nabla \log T \right) T \log c_i dx
\\
&= - D_i \int_{\TT^2} \left(\na c_i + \frac{z_i e}{k_B T} c_i \nabla  \phi
+
c_i \log c_i \nabla \log T \right) \cdot \frac{\na c_i}{c_i}T  dx
\\&\qquad- D_i \int_{\TT^2} \left(\na c_i + \frac{z_i e}{k_B T} c_i \nabla  \phi
+
c_i \log c_i \nabla \log T \right) \cdot \na T \log c_i dx.
\end{split}
\end{equation}
The first term amounts to 
\be \label{gwp-eqn2}
\begin{aligned}
&- D_i \int_{\TT^2} \left(\na c_i + \frac{z_i e}{k_B T} c_i \nabla \phi
+
c_i \log c_i \nabla \log T \right) \cdot \frac{\na c_i}{c_i}T  dx
\\
&= -D_i \int_{\TT^2}\frac{T}{c_i} \left|\na c_i + \frac{z_i e}{k_B T} c_i \nabla   \phi
+
c_i \log c_i \nabla \log T\right|^2  dx
\\&\hspace{2cm}+ D_i \int_{\TT^2}\left(\na c_i + \frac{z_i e}{k_B T} c_i \nabla   \phi
+
c_i \log c_i \nabla \log T \right) \cdot \left(\frac{z_i e}{k_B} \nabla   \phi
+
 \log c_i \nabla T\right) dx
\\&=- D_i \left\|\frac{\sqrt{T}}{\sqrt{c_i}} (\na c_i + \frac{z_i e}{k_B T} c_i \nabla \phi
+
c_i \log c_i \nabla \log T) \right\|_{L^2}^2- \frac{z_i e}{k_B} \int_{\TT^2}(\pa_t c_i +  u \cdot \na c_i) \phi dx
\\
&\hspace{2cm}+D_i \int_{\TT^2}\left(\na c_i + \frac{z_i e}{k_B T} c_i \nabla   \phi
+
c_i \log c_i \nabla \log T \right) \cdot \nabla T \log c_i dx. 
\end{aligned} 
\end{equation}
Notice that the last term in \eqref{gwp-eqn1} and the last term in \eqref{gwp-eqn2} get canceled. Consequently, we have
\begin{equation}
\begin{aligned}
&\frac{d}{dt} \int_{\TT^2}TE_i dx - \int_{\TT^2}\kappa\Delta T E_i dx
\\
=&- D_i \left\|\frac{\sqrt{T}}{\sqrt{c_i}} (\na c_i + \frac{z_i e}{k_B T} c_i \nabla \phi
+
c_i \log c_i \nabla \log T) \right\|_{L^2}^2- \frac{z_i e}{k_B} \int_{\TT^2}(\pa_t c_i + u \cdot \na c_i) \phi dx.
\end{aligned}
\end{equation}
Summing over all indices $i \in \left\{1, \dots, N \right\}$ and making use of \eqref{N-NPNS-system-original-2} yield
\begin{equation}
\sum\limits_{i=1}^{N} - \frac{z_i e}{k_B} \int_{\TT^2}(\pa_t c_i + u \cdot \na c_i) \phi dx 
= - \frac{1}{k_B}\int_{\TT^2}(\pa_t \rho + u \cdot \na \rho) \phi  dx
= - \frac{\varepsilon}{2k_B} \frac{d}{dt} \|\na \phi\|_{L^2}^2 + \frac1{k_B}\int_{\TT^2}\rho \na \phi \cdot u dx. 
\end{equation}Putting these together, we end up with the energy equality
\begin{equation}
\beg{aligned}
&\frac{d}{dt} \left(\frac{\varepsilon}{2} \|\na \phi\|_{L^2}^2 + \sum\limits_{i=1}^{N} \int_{\TT^2} k_BTE_i dx \right) + \sum\limits_{i=1}^{N} k_B D_i \left\|\frac{\sqrt{T}}{\sqrt{c_i}} (\na c_i + \frac{z_i e}{k_B T} c_i \nabla \phi
+
c_i \log c_i \nabla \log T) \right\|_{L^2}^2
\\&= \int_{\TT^2}\rho \na \phi \cdot u dx + \sum\limits_{i=1}^{N} \int_{\TT^2}\kappa k_B\Delta T E_i dx.
\end{aligned}
\end{equation}In order to cancel the term $\int_{\TT^2}\rho \na \phi \cdot u dx$, we couple the latter with the $L^2$ evolution of the velocity $u$ and obtain 
\begin{equation}
\beg{aligned}
&\frac{d}{dt} \left(\frac{\varepsilon}{2} \|\na \phi\|_{L^2}^2 + \sum\limits_{i=1}^{N} \int_{\TT^2} k_BTE_i  dx + \frac{1}{2} \|u\|_{L^2}^2 \right) 
\\
&\qquad+ \sum\limits_{i=1}^{N} k_BD_i \left\|\frac{\sqrt{T}}{\sqrt{c_i}} (\na c_i + \frac{z_i e}{k_B T} c_i \nabla \phi
+
c_i \log c_i \nabla \log T) \right\|_{L^2}^2 + \nu\|\na u\|_{L^2}^2
\\&= \sum\limits_{i=1}^{N} \int_{\TT^2}\kappa k_B\Delta T E_i dx+ g\alpha_T\int_{\TT^2}(T - T_r) e_2 \cdot u dx.
\end{aligned}
\end{equation}
%When there is no heat diffusion, i.e., $\kappa=0$, the right hand side vanishes.
Now we explore the structure of the ionic concentration dissipative term. Indeed, it holds that 

\begin{equation}
    \begin{aligned}
   &\hspace{0.5cm}\sum_{i=1}^{N} k_B D_i 
\left\|
\frac{\sqrt{T}}{\sqrt{c_i}}
\left(
\nabla c_i
+ \frac{z_i e}{k_B T} c_i \nabla \phi
+ c_i \log c_i \nabla \log T
\right)
\right\|_{L^2}^2
\\
&\geq \sum_{i=1}^{N} k_B D 
\left\|
\frac{\sqrt{T}}{\sqrt{c_i}}
\left(
\nabla c_i
+ \frac{z_i e}{k_B T} c_i \nabla \phi
+ c_i \log c_i \nabla \log T
\right)
\right\|_{L^2}^2
\\&=
\sum_{i=1}^{N} k_B D
\int_{\TT^2}
\Big(
\frac{T}{c_i}|\nabla c_i|^2
+ \frac{T}{c_i}\left|\frac{z_i e}{k_B T} c_i \nabla \phi + c_i \log c_i \nabla \log T\right|^2
+ 2\nabla c_i \cdot \frac{z_i e}{k_B}  \nabla \phi
+ 2\,\nabla c_i \cdot  \log c_i \nabla T
\Big)dx 
\\&= \sum\limits_{i=1}^{N} 4k_BD \left\|\sqrt{T} \na \sqrt{c_i} \right\|_{L^2}^2
+  \sum\limits_{i=1}^{N} k_B D \left\| \frac{\sqrt{T}}{\sqrt{c_i}} \Big(\frac{z_i e}{k_B T} c_i \nabla \phi + c_i \log c_i \nabla \log T \Big)\right\|_{L^2}^2
\\&\quad+ \int_{\TT^2}\sum\limits_{i=1}^{N}2 D \na(ez_i c_i) \cdot \na \phi dx 
+ \int_{\TT^2}\sum\limits_{i=1}^{N} 2k_BD \log c_i \na c_i \cdot \na T dx,
% \\
% &\quad
% + \int_{\TT^2}\sum\limits_{i=1}^{N}2z_iDeT^{-1}c_i \log c_i \na \phi \cdot \na T dx,
    \end{aligned}
\end{equation}
where $D$ is the minimum of $D_1, \dots, D_N$. 
The first two terms are nonnegative. The third term is also nonnegative, which can be shown using \eqref{N-NPNS-system-original-2} and integration by parts:
\be 
\int_{\TT^2}\sum\limits_{i=1}^{N} 2D \na (ez_i c_i) \cdot \na \phi dx
= 2\varepsilon^{-1} D \|\rho\|_{L^2}^2.
\ee 
% Regarding the sixth term, we can use the Cauchy-Schwarz and Young inequalities to bound it from above as follows, 
% \be 
% \begin{aligned}
% &\int_{\TT^2}\sum\limits_{i=1}^{N} 2z_iD_ieT^{-1} c_i \log c_i \na \phi \cdot \na T dx
% \le \sum\limits_{i=1}^{N} 2D_iz_i e \|T^{-\frac{1}{2}} \sqrt{c_i} \na \phi\|_{L^2}\|T^{-\frac{1}{2}}\sqrt{c_i} \log c_i \na T\|_{L^2} 
% \\&\quad\quad\le \sum\limits_{i=1}^{N} k_B^{-1} D_iz_i^2e^2 \|T^{-\frac{1}{2}}\sqrt{c_i} \na \phi\|_{L^2}^2
% + \sum\limits_{i=1}^{N} k_BD_i \|T^{-\frac{1}{2}} \sqrt{c_i} \log c_i \na T\|_{L^2}^2.
% \end{aligned}
%\ee 
Consequently, we obtain the differential inequality
\begin{equation}
\beg{aligned}
&\frac{d}{dt} \mathcal Q 
+\sum\limits_{i=1}^{N}4k_BD \|\sqrt{T} \na \sqrt{c_i}\|_{L^2}^2 +2\varepsilon^{-1} D\|\rho\|_{L^2}^2 + \nu\|\na u\|_{L^2}^2
\\\le &\sum\limits_{i=1}^{N} \int_{\TT^2}k_B\left(\kappa \Delta T E_i  - 2D \log c_i \na c_i \cdot \na T\right) dx + g\alpha_T\int_{\TT^2}(T - T_r) e_2 \cdot u dx
\\=& \sum\limits_{i=1}^{N} \int_{\TT^2}{-k_B(2D+\kappa)} \log c_i \na c_i \cdot \na T dx + g\alpha_T\int_{\TT^2}(T - T_r) e_2 \cdot u dx.  
\end{aligned}
\end{equation}
Let $\eta > 0$ be a sufficiently small exponent to be chosen later. When $c_i\leq 1$, we have the bound $|\sqrt{c_i} \log c_i| \leq \frac{2}{e}$. When $c_i >1$,
the logarithmic estimate 
$
|\sqrt{c_i} \log c_i| \le \eta^{-1} c_i^{\frac{1}{2} + \eta}
$ holds. 
Therefore, it follows that
\[
|\sqrt{c_i} \log c_i| \le 1 + \eta^{-1} c_i^{\frac{1}{2} + \eta}
\] for any $x \in \TT^2$ and any $t \ge 0$. 
Using the latter, we estimate 
 \begin{equation}
\beg{aligned}
& \hspace{0.3cm}\sum\limits_{i=1}^{N}  -k_B(2D+\kappa) \int_{\TT^2}\na T \cdot \na c_i \log c_i dx
=\sum\limits_{i=1}^{N}  -k_B(2D+\kappa) \int_{\TT^2}2\na T \cdot \na \sqrt{c_i} \sqrt{c_i}  \log c_i dx
\\&\le \sum\limits_{i=1}^{N} 4 k_B(2D+\kappa) \int_{\TT^2}|\na T| | \na \sqrt{c_i}| (\eta^{-1} c_i^{\frac{1}{2} + \eta} +1) dx
\\&\le \sum\limits_{i=1}^{N} 4\eta^{-1} k_B(2D+\kappa) \|\na T\|_{L^{4+\delta}} \|\na \sqrt{c_i}\|_{L^2}\|\sqrt{c_i}\|_{L^4}\|c_i^{\eta}\|_{L^{\frac{1}{\eta}}}
\\
&\hspace{2cm} +  \sum\limits_{i=1}^{N}4 k_B(2D+\kappa)\|\nabla T\|_{L^2}  \|\na \sqrt{c_i}\|_{L^2}.
\end{aligned}
 \end{equation}Here $\delta$ is chosen so that $4+\delta, 2, 4, \frac1\eta$ are H\"older exponents. Recall that $T_r = \int_{\TT^2} T dx$ is the spatial average. We choose $\eta$ and $\delta$ such that 
 \begin{equation}
\|\na T\|_{L^{4+\delta}}
\le C\|T - T_r\|_{L^{\infty}}^{\frac{1}{2}} \| T-T_r\|_{H^{\frac{5}{2}}}^{\frac{1}{2}}
\end{equation}
holds. This leads to the choice of $\delta = 4$ and $\eta = \frac{1}{8}$, yielding
\begin{equation}
\beg{aligned}
& \sum\limits_{i=1}^{N}  -k_B(2D+\kappa) \int_{\TT^2}\na T \cdot \na c_i \log c_i dx
\\\le &C\sum\limits_{i=1}^{N}  \|T-T_r\|_{L^{\infty}}^{\frac{1}{2}} \|T-T_r\|_{H^{\frac{5}{2}}}^{\frac{1}{2}} \|\na \sqrt{c_i}\|_{L^2}\|\sqrt{c_i}\|_{L^4}\|c_i^{1/8}\|_{L^8}
 +  C\sum\limits_{i=1}^{N}\|\nabla T\|_{L^2}  \|\na \sqrt{c_i}\|_{L^2}
 \\
 \leq &C\sum\limits_{i=1}^{N}  \|T-T_r\|_{L^{\infty}}^{\frac{1}{2}} \|T-T_r\|_{H^{\frac{5}{2}}}^{\frac{1}{2}} \|\na \sqrt{c_i}\|_{L^2}( \|\na \sqrt{c_i}\|_{L^2}^{\frac{1}{2}}\|c_i\|_{L^1}^{\frac{3}{8}} +\|c_i\|_{L^1}^{\frac{5}{8}}) + C\sum\limits_{i=1}^{N}\|\nabla T\|_{L^2}  \|\na \sqrt{c_i}\|_{L^2}
\end{aligned} 
\end{equation}  for some universal constant $C$, where in the last step we have used the Ladyzhenskaya interpolation inequality.
Applications of Young's inequality give rise to 
\begin{equation}
\beg{aligned}
& \sum\limits_{i=1}^{N}  -k_B(2D+\kappa) \int_{\TT^2}\na T \cdot \na c_i \log c_i dx
\\
\leq&\sum\limits_{i=1}^{N} 2k_{B}D \|\sqrt{T} \na \sqrt{c_i}\|_{L^2}^2 + C\sum\limits_{i=1}^{N}  \|c_i\|_{L^1}^{\frac{3}{2}}\|T-T_r\|_{L^{\infty}}^2 \|T -T_r\|_{H^{\frac{5}{2}}}^2  \\
&+ C \sum\limits_{i=1}^{N}  \|c_i\|_{L^1}^{\frac{5}{4}}\|T-T_r\|_{L^{\infty}}\|T -T_r\|_{H^{\frac{5}{2}}}
+ C\|\na T\|_{L^2}^2, 
\end{aligned} 
\end{equation} where the bound $\|\frac1T\|_{L^\infty} \leq \frac1{T^*}$ was exploited.
Denoting by $\mathcal{R}$ the following energy, 
\begin{equation}
\begin{aligned}
\mathcal{R} &=  \sum\limits_{i=1}^{N}2k_BD \|\sqrt{T} \na \sqrt{c_i}\|_{L^2}^2 +2\varepsilon^{-1} D\|\rho\|_{L^2}^2 + \nu\|\na u\|_{L^2}^2,
\end{aligned}
\end{equation}
it holds that 
\begin{equation}
\begin{aligned}
\frac{d}{dt}\mathcal{Q} + \mathcal{R} 
&\le C\sum\limits_{i=1}^{N}  \|c_i\|_{L^1}^{\frac{3}{2}}\|T-T_r\|_{L^{\infty}}^2 \|\Lambda^{\frac52}T \|_{L^2}^2 + C \sum\limits_{i=1}^{N}  \|c_i\|_{L^1}^{\frac{5}{4}}\|T-T_r\|_{L^{\infty}}\|T -T_r\|_{H^{\frac{5}{2}}}
\\
&\qquad + C\|\na T\|_{L^2}^2+ C \|u\|_{L^2} \|T-T_r\|_{L^2}
\\
&\leq C\left(\|T-T_r\|_{L^{\infty}}^2 \|\Lambda^{\frac52}T \|_{L^2}^2 + \|T-T_r\|_{L^{\infty}} (1+\|\Lambda^{\frac52}T \|_{L^2}^2)  + \|\na T\|_{L^2}^2 + \|\nabla u\|_{L^2} \|T-T_r\|_{L^2}\right).
\end{aligned}
\end{equation}
Here we have used the Poincar\'e inequality for $u$, and the fact that $\|c_i\|_{L^1} = \|c_{i0}\|_{L^1}$ is constant in time.
Accordingly, we need to derive uniform bounds for the temperature $T$ in $L^2(0,t_0; H^{\frac52})$. 

The $H^{\frac{3}{2}}$ evolution of $T$ obeys
\begin{equation}
\begin{aligned}
\frac{1}{2}\frac{d}{dt}\|\Lambda^{\frac{3}{2}}T\|_{L^2}^2 + \kappa \|\Lambda^{\frac{5}{2}}T\|_{L^2}^2
= - \int_{\TT^2}\Lambda (u \cdot \na T) \Lambda^{2}T dx
\le \|\na (u \cdot \na T)\|_{L^{\frac{4}{3}}} \|\Delta T\|_{L^4}.
\end{aligned}
\end{equation}
By making use of continuous Sobolev embeddings and interpolation inequalities, we obtain 
\begin{equation}
\begin{aligned}
\frac{1}{2}\frac{d}{dt}\|\Lambda^{\frac{3}{2}}T\|_{L^2}^2 + \kappa \|\Lambda^{\frac{5}{2}}T\|_{L^2}^2
&\le C(\|\na u\|_{L^2}\|\na T\|_{L^4} + \|u\|_{L^4} \|\Delta T\|_{L^2})\|\Lambda^{\frac{5}{2}} T\|_{L^2}
\\&\le C\|\na u\|_{L^2} \|\Lambda^{\frac{3}{2}}T\|_{L^2} \|\Lambda^{\frac{5}{2}}T\|_{L^2} 
+C \|u\|_{L^2}^{\frac{1}{2}}\|\na u\|_{L^2}^{\frac{1}{2}}\|\Lambda^{\frac{3}{2}}T\|_{L^2}^{\frac{1}{2}}\|\Lambda^{\frac{5}{2}}T\|_{L^2}^{\frac{3}{2}}
\\&\le C\|\na u\|_{L^2}^2 \left(\|u\|_{L^2}^2 \|\Lambda^{\frac{3}{2}}T\|_{L^2}^2 +  \|\Lambda^{\frac{3}{2}}T\|_{L^2}^2 \right) + \frac{\kappa}{2} \|\Lambda^{\frac{5}{2}}T\|_{L^2}^2.
\end{aligned}
\end{equation} 
This yields 
\begin{equation}
\frac{d}{dt}\|\Lambda^{\frac{3}{2}}T\|_{L^2}^2 + \kappa \|\Lambda^{\frac{5}{2}}T\|_{L^2}^2
\le C\|\na u\|_{L^2}^2 \left(\|u\|_{L^2}^2 \|\Lambda^{\frac{3}{2}}T\|_{L^2}^2 +  \|\Lambda^{\frac{3}{2}}T\|_{L^2}^2 \right).
\end{equation}

Recall that there exists a time $t_0>0$ such that $\|\Lambda^{\frac{3}{2}} T\|_{L^2}^2 \le \|\Lambda^{\frac{3}{2}}T_0\|_{L^2}^2 + 1$ and $\mathcal{Q} \le 2\mathcal{Q}_0$ for all $t \in [0,t_0]$. Thus, on the time interval $[0,t_0]$, it holds that 
\begin{equation}\label{T32}
\|\Lambda^{\frac32} T(t)\|_{L^2}^2+\kappa \int_0^t \|\Lambda^{\frac{5}{2}}T\|_{L^2}^2 ds 
\le \|\Lambda^{\frac{3}{2}}T_0\|_{L^2}^2 + C(\mathcal{Q}_0 \|\Lambda^{\frac{3}{2}}T_0\|_{L^2}^2 +  \|\Lambda^{\frac{3}{2}}T_0\|_{L^2}^2 +\mathcal{Q}_0 + 1)\int_{0}^{t}\|\na u\|_{L^2}^2 ds.
\end{equation}
Moreover, from the $L^2$ energy evolution of the temperature and the maximum principle, we have 
\begin{equation}
\kappa \int_0^{t} \|\na T\|_{L^2}^2 ds \le \|T_0-T_r\|_{L^2}^2
\le C\|T_0 - T_r\|_{L^{\infty}}^2, \quad \|T-T_r\|_{L^\infty} \leq \|T_0 - T_r\|_{L^{\infty}}.
\end{equation}
Consequently, by Young's inequality we obtain 
\begin{equation}
\begin{aligned}
\mathcal{Q}(t) + \int_{0}^{t} \mathcal{R}
 ds
 \le &\mathcal{Q}_0 
 +C(1+\|\Lambda^{\frac32} T_0\|_{L^2})(\|T_0-T_r\|_{L^{\infty}}+\|T_0-T_r\|_{L^{\infty}}^2) %+ C\|T_0-T_r\|_{L^\infty}  \left(  \int_{0}^{t}\|\na u\|_{L^2}^2 ds\right)^{\frac12}
\\&+
 C(\mathcal{Q}_0 \|\Lambda^{\frac{3}{2}}T_0\|_{L^2}^2 +  \|\Lambda^{\frac{3}{2}}T_0\|_{L^2}^2 +\mathcal{Q}_0 + 1)
   (\|T_0-T_r\|_{L^{\infty}}+\|T_0-T_r\|_{L^{\infty}}^2)  \int_{0}^{t}\|\na u\|_{L^2}^2 ds
% \\ \leq &\mathcal{Q}_0 
%  +C(1+\|\Lambda^{\frac32} T_0\|_{L^2})(\|T_0-T_r\|_{L^{\infty}}+\|T_0-T_r\|_{L^{\infty}}^2)  
%  \\
%  &+
%  C\left[(\mathcal{Q}_0 \|\Lambda^{\frac{3}{2}}T_0\|_{L^2}^2 +  \|\Lambda^{\frac{3}{2}}T_0\|_{L^2}^2 +\mathcal{Q}_0 + 1)
%    \|T_0-T_r\|_{L^{\infty}}^2 + \|T_0-T_r\|_{L^{\infty}}\right] \int_{0}^{t}\|\na u\|_{L^2}^2 ds
 \\=: &\mathcal{Q}_0 + A_1 + A_2 \int_{0}^{t}\|\na u\|_{L^2}^2 ds.
 \end{aligned}
\end{equation}
Now we choose $\|T_0 - T_r\|_{L^\infty}$ to be sufficiently small so that 
\be \label{small-1}
 A_1 \le 
 \frac{\mathcal{Q}_0}{2} \quad\text{ and } \quad A_2 \le \frac{\nu}{2}
\ee 
hold simultaneously. Then it follows that
\begin{equation} \label{squarerootc}
\mathcal{Q}(t) + \frac12\int_{0}^{t} \mathcal{R}(s) ds \leq \frac32 \mathcal{Q}_0 < 2 \mathcal{Q}_0
\end{equation}on $[0,t_0]$. In addition, this together with \eqref{T32} imply that $\Lambda^{\frac32}T$ and $\Lambda^{\frac52}T$ are bounded in $L^\infty(0,t_0;L^2)$ and $L^2(0,t_0;L^2)$ respectively, with bounds uniform in time. Note that the smallness of $\|T_0 - T_r\|_{L^\infty}$ depends on the parameters of the system and other initial conditions, but is independent of $t_0$.  

\medskip
   {\bf{Step 3. Uniform bounds for the concentrations in $L^2.$}} We address the $L^2$ evolution of each ionic concentration $c_i$. Indeed, we have 
\begin{equation}
\begin{aligned}
&\frac{1}{2}\frac{d}{dt}\|c_i\|_{L^2}^2 + D_i\|\na c_i\|_{L^2}^2 
\\
= &-z_i D_i k_{B}^{-1} e\int_{\TT^2}T^{-1} c_i \na \phi \cdot \na c_i dx
-D_i \int_{\TT^2}T^{-1} c_i \log c_i \nabla T \cdot \na c_i dx
\\
\leq & C\left(  \|c_i\|_{L^4}\|\na \phi\|_{L^4} + \|c_i \log  c_i \na T\|_{L^2} \right)\|\na c_i\|_{L^2}
\\
\leq & C \left( (\|c_i\|_{L^2}^{\frac{1}{2}}\|\na c_i\|_{L^2}^{\frac{1}{2}} + \|c_i\|_{L^2} )\|\na \phi\|_{L^2}^{\frac{1}{2}}\|\rho\|_{L^2}^{\frac{1}{2}} + C\|(c_i^{\frac{9}{8}} +1) \na T\|_{L^2} \right)\|\na c_i\|_{L^2}
\\\leq & \frac{D_i}{8}\|\na c_i\|_{L^2}^2 + C \|c_i\|_{L^2}^2 \|\na \phi\|_{L^2}^2 \|\rho\|_{L^2}^2 
+ C \|c_i\|_{L^2}^2\|\rho\|_{L^2}^2 + C\|\na T\|_{L^2}^2 + C\|c_i^{\frac{9}{8}} \na T\|_{L^2}^2.
\end{aligned}
\end{equation} 
Using interpolation inequalities and the continuous embeddings of $H^{\frac{3}{2}}$ and $H^{\frac{5}{2}}$ in $H^1$, we estimate the term
\be 
\begin{aligned}
\|c_i^{\frac{9}{8}} \na T\|_{L^2}^2
&\le \|c_i\|_{L^4}^2\| {c}_i^{\frac{1}{8}}\|_{L^8}^2 \|\na T\|_{L^{8}}^2
\\&\le C(\|c_i\|_{L^2}\|\na c_i\|_{L^2} + \|c_i\|_{L^2}^2)\|c_i\|_{L^1}^{\frac{1}{4}}\|\l T\|_{L^2}  \|\l^{\frac{5}{2}}T\|_{L^2}
\\&\le \frac{D_i}{8}\|\na c_i\|_{L^2}^2 + C\|\l^{\frac{3}{2}} T\|_{L^2}^2 \|\l^{\frac{5}{2}}T\|_{L^2}^2 \|c_i\|_{L^2}^2 
+  C \|\l^{\frac{5}{2}}T\|_{L^2}^2\|c_i\|_{L^2}^2.
\end{aligned}
\ee 
This gives rise to the differential inequality 
\be 
\begin{aligned}
&\frac{d}{dt}\|c_i\|_{L^2}^2
+ D_i\|\na c_i\|_{L^2}^2 
\\\leq & C\|\na T\|_{L^2}^2 + C\left(\|\l^{\frac{3}{2}} T\|_{L^2}^2 \|\l^{\frac{5}{2}}T\|_{L^2}^2 +  \|\l^{\frac{5}{2}}T\|_{L^2}^2+ \|\na \phi\|_{L^2}^2 \|\rho\|_{L^2}^2 
+  \|\rho\|_{L^2}^2\right)\|c_i\|_{L^2}^2
\\=: &C\|\na T\|_{L^2}^2 + F(t) \|c_i\|_{L^2}^2
\end{aligned}
\ee where $\int_{0}^{t} F(s) ds$ is bounded uniformly in time on $[0,t_0]$. Indeed, this follows from the uniform-in-time boundedness of $\rho$ and $\l^{\frac{5}{2}}T$ in $L^2(0,t_0; L^2)$ and $\l^{\frac{3}{2}}T$ and $\na \phi$ in $L^{\infty}(0,t_0; L^2)$. Moreover, the bound is independent of $t_0$.
Thus, an application of Gr\"onwall's inequality yields 
\be 
\begin{aligned}
\|c_i(t)\|_{L^2}^2 + D_i \int_{0}^{t} \|\na c_i\|_{L^2}^2 ds
&\le C\left(\|c_i(0)\|_{L^2}^2 + \int_{0}^{t}\|\na T(s)\|_{L^2}^2 ds\right) \exp \left(\int_{0}^{t} F(s) ds\right)
\\&\le C\left(\|c_i(0)\|_{L^2}^2 +  \|T_0 - T_r\|_{L^2}^2 \right) \exp \left(\int_{0}^{t} F(s) ds\right)
\end{aligned}
\ee for any $t \in [0,t_0].$ 
Note that these bounds are uniform in time and they depend only on the initial data and the parameters of the system. 

\medskip
   {\bf{Step 4. Uniform bounds for the velocity in $H^1.$}} The evolution of $\na u$ in $L^2$ is described by the energy equality
\begin{equation}
\frac{1}{2}\frac{d}{dt}\|\na u\|_{L^2}^2
+ \nu \|\Delta u\|_{L^2}^2 = -g\alpha_T \int_{\TT^2}(T - T_r)e_2 \Delta u dx+ \int_{\TT^2}\rho \na \phi \Delta u dx,
\end{equation}
where the cancellation 
\begin{equation*}
    \int_{\TT^2} (u\cdot\nabla u) \cdot \Delta u dx = 0
\end{equation*}
is exploited.
Applications of the Cauchy-Schwarz, Young, and Gr\"onwall inequalities yield
\begin{equation}
\begin{aligned}
\|\na u\|_{L^2}^2 
&\le \|\na u_0\|_{L^2}^2
+ C \|T_0 - T_r\|_{L^2}^2
+ \int_{0}^{t} \|\rho\|_{L^4}^2\|\na \phi\|_{L^4}^2 ds  
\\&\le \|\na u_0\|_{L^2}^2
+ C  \|T_0 - T_r\|_{L^2}^2
+ C \int_{0}^{t} \|\rho\|_{L^2}^2\|\na \rho\|^2_{L^2} ds,
\end{aligned}
\end{equation} for any $t \in [0,t_0].$ The latter is bounded by a constant $\Gamma$ that depends only on the initial data and does not depend on $t_0$. This follows from the uniform-in-time boundedness of the ionic concentrations and their gradients in $L^{\infty}(0,t_0; L^2)$ and $L^2(0,t_0; L^2)$ respectively. 

\medskip
      {\bf{Step 5. Uniform bounds for the temperature in $H^{\frac{3}{2}}.$}} In order to extend beyond time $t_0$, we need to show $\|\Lambda^{\frac{3}{2}}T\|^2_{L^2} < \|\Lambda^{\frac{3}{2}} T_0\|_{L^2}^2+1$ on $[0,t_0]$. To this end, we recall that on $[0,t_0]$ we have 
\begin{equation}
\begin{aligned}
\frac{d}{dt}\|\Lambda^{\frac{3}{2}}T\|_{L^2}^2 + \kappa \|\Lambda^{\frac{5}{2}}T\|_{L^2}^2
&\le C\|\na u\|_{L^2}^2 \left( \|u\|_{L^2}^2 \|\Lambda^{\frac{3}{2}}T\|_{L^2}^2 + \|\Lambda^{\frac{3}{2}}T\|_{L^2}^2 \right).
\\&\le C\|\na u\|_{L^2}^2 ( \mathcal{Q}_0 +1)\|\na T\|_{L^{2}}^{\frac{4}{3}} \|\Lambda^{\frac{5}{2}}T\|_{L^2}^{\frac{2}{3}}, 
\end{aligned}
\end{equation}where the last inequality follows from 
\begin{equation}
    \|\Lambda^{\frac{3}{2}}T\|_{L^2}^2 \lesssim {\|\nabla T\|^{\frac43}_{L^2}} \|\Lambda^{\frac{5}{2}}T\|_{L^2}^{\frac23}.
\end{equation}
By Young's inequality, we infer that 
\begin{equation}
    \frac{d}{dt}\|\Lambda^{\frac{3}{2}}T\|_{L^2}^2 +\frac{\kappa}{2} \|\Lambda^{\frac{5}{2}}T\|_{L^2}^2
\le C\|\na u\|_{L^2}^3 (\mathcal{Q}_0 +1)^\frac32{\|\nabla T\|^2_{L^2}}.
\end{equation} Since $\|\na u\|_{L^2}^2 \le \Gamma$ on $[0, t_0]$, we deduce that 
\begin{equation}
\|\Lambda^{\frac{3}{2}}T\|_{L^2}^2 {+ \frac{\kappa}2 \int_0^{t}\|\Lambda^{\frac{5}{2}}T\|_{L^2}^2 ds}
\le \|\Lambda^{\frac{3}{2}}T_0\|_{L^2}^2 + C\Gamma^{\frac{3}{2}} ( \mathcal{Q}_0+1)^\frac{3}{2}\|T_0 - T_r\|_{L^{\infty}}^2. 
\end{equation}
Choosing $\|T_0-T_r\|_{L^\infty}$ to be sufficiently small such that
\begin{equation}\label{small-2}
C\Gamma^{\frac{3}{2}} (\mathcal{Q}_0+1)^{\frac{3}{2}} \|T_0 - T_r\|_{L^{\infty}}^2
\le \frac{1}{2}
\end{equation}
Consequently, it yields the desired result $\|\Lambda^{\frac{3}{2}}T\|_{L^2}^2 < \|\Lambda^{\frac{3}{2}} T_0\|_{L^2}^2+1$. Note that the smallness condition imposed on $\|T_0-T_r\|_{L^\infty}$ is again independent of $t_0$.

\medskip
      {\bf{Step 6. Extension.}} 
As a conclusion of the previous steps, we have $\|\Lambda^{\frac{3}{2}}T\|_{L^2}^2$ and $\mathcal{Q}$ strictly less than $\|\Lambda^{\frac{3}{2}}T_0\|_{L^2}^2 + 1$ and $2\mathcal{Q}_0$ respectively, which allows us to extend the solution to the time interval $[0,\mathcal{T}]$. Indeed, suppose there is a time $t_1 < \mathcal{T}$ such that $\|\l^{\frac{3}{2}}T(t)\|_{L^2}^2 < \|\l^{\frac{3}{2}}T_0\|_{L^2}^2 + 1$ and $\mathcal{Q}(t) < 2\mathcal{Q}_0$ on $[0,t_1)$, but $\|\l^{\frac{3}{2}}T(t_1)\|_{L^2}^2 = \|\l^{\frac{3}{2}}T_0\|_{L^2}^2 + 1$ or $\mathcal{Q}(t_1) = 2\mathcal{Q}_0$. This implies that  $\|\l^{\frac{3}{2}}T(t)\|_{L^2}^2 \le \|\l^{\frac{3}{2}}T_0\|_{L^2}^2 + 1$ and $\mathcal{Q}(t) \le 2\mathcal{Q}_0$ on $[0,t_1]$, in which case we can reapply the above argument to deduce that $\|\l^{\frac{3}{2}}T(t)\|_{L^2}^2 < \|\l^{\frac{3}{2}}T_0\|_{L^2}^2 + 1$ and $\mathcal{Q}(t) < 2\mathcal{Q}_0$ on $[0,t_1]$, contradicting our assumption. Therefore, we can extend, not only slightly beyond $t_0$, but also up to the time $\mathcal{T}$. The only requirement is the continuity in time of the norms involved, which holds due to Lions-Magenes. 

\medskip
      {\bf{Step 7. Uniform bounds for the temperature in $H^{\frac{5}{2}}$ and concentrations in $H^1$.}} The goal of this step is to study the behavior of the smooth solution at time $\mathcal{T}$ in the aforementioned Sobolev spaces in which uniqueness is guaranteed. In fact, the $H^{\frac{5}{2}}$ norm of $T$ obeys 
\be
\begin{aligned}
    \frac{d}{dt}\|\Lambda^{\frac{5}{2}}T\|_{L^2}^2 + \kappa \|\Lambda^{\frac{7}{2}}T\|_{L^2}^2 \leq C\|\Lambda^{\frac32} u\|_{L^2}^2 \|\Lambda^{\frac52} T\|_{L^2}^2.
\end{aligned}
\ee The latter follows from integration by parts and applications of H\"older and Young inequalities. Due to the uniform-in-time boundedness of $u$ in $L^2(0, \mathcal{T}; H^2)$, together with Gr\"onwall's lemma, we deduce that the temperature $T$ belongs to $L^\infty(0, \mathcal{T}; H^{\frac{5}{2}})\cap L^2(0, \mathcal{T}; H^{\frac{7}{2}})$ with uniform-in-time bounds. 

As for the $H^1$ evolution of the ionic concentrations, we multiply each $c_i$-equation by $\Delta c_i$ and integrate over $\TT^2$. Our goal is to obtain uniform-in-time bounds that are independent of $\mathcal{T}$, which requires delicate analysis. The advection and electromigration terms can be estimated classically as follows, 
\be 
\begin{aligned}
&\left|\int_{\TT^2} u \cdot \na c_i \Delta c_i dx\right| + D_i|z_i|ek_{B}^{-1}\left|\int_{\TT^2} \na \cdot (c_i T^{-1} \na \phi) \Delta c_i dx \right|
\\&\quad\quad\le C\|u\|_{L^{\infty}}^2 \|\na c_i\|_{L^2}^2 
+ C\|\na c_i \|_{L^2}^2 \|\na \phi\|_{L^{\infty}}^2 
+ C\|c_i\|_{L^4}^2 \|\Delta \phi\|_{L^4}^2
\\&\quad\quad\quad\quad+  C\|\na T\|_{L^{\infty}}^2 \|\na \phi\|_{L^4}^2\|c_i\|_{L^4}^2
+ \frac{D_i}{4} \|\Delta c_i\|_{L^2}^2
\\&\quad\quad\le C\left(\|\Delta u\|_{L^2}^2 + \|\na \rho\|_{L^2}^2
+ \|\Lambda^{\frac{5}{2}}T\|_{L^2}^2 \|\na \rho\|_{L^2}^2  \right) \|\na c_i\|_{L^2}^2 
\\&\quad\quad\quad\quad+ C\overline{c_i}^2 \|\na \rho\|_{L^2}^2 \left(1+ \|\Lambda^{\frac{5}{2}}T\|_{L^2}^2 \right)+ \frac{D_i}{4} \|\Delta c_i\|_{L^2}^2.
\end{aligned}
\ee The logarithmic term can be decomposed into three parts: 
\be
\begin{aligned}
&D_i\left|\int_{\TT^2} \na \cdot \left(c_i \log c_i T^{-1} \na T \right) \Delta c_i \right|
\le D_i\left|\int_{\TT^2} \left(\na c_i T^{-1} \cdot \na T \right) \Delta c_i \right|
\\&\quad\quad\quad\quad+ D_i\left|\int_{\TT^2} \left(c_i \log c_i \na \cdot (T^{-1} \na T) \right) \Delta c_i \right|
+ D_i\left|\int_{\TT^2} \left(\na c_i \log c_i T^{-1} \cdot\na T \right) \Delta c_i \right|.
\end{aligned}
\ee The first two terms can be estimated as follows,
\be 
\begin{aligned}
&D_i\left|\int_{\TT^2} \left(\na c_i T^{-1} \cdot \na T \right) \Delta c_i \right|
+ D_i\left|\int_{\TT^2} \left(c_i \log c_i \na \cdot (T^{-1} \na T) \right) \Delta c_i \right| 
\\&\quad\quad\le C\|\na T\|_{L^{\infty}}^2 \|\na c_i\|_{L^2}^2
+  C\|\na T\|_{L^{\infty}}^4 \|c_i \log c_i\|_{L^2}^2
+ C\|\Delta T\|_{L^4}^2\|c_i \log c_i\|_{L^4}^2 + \frac{D_i}{8}\|\Delta c_i\|_{L^2}^2
\\&\quad\quad\le C\|\na T\|_{L^{\infty}}^2 \|\na c_i\|_{L^2}^2
+  C\|\na T\|_{L^{\infty}}^4 (1 + \|c_i\|_{L^3}^3)
+ C\|\Delta T\|_{L^4}^2(1 + \|c_i\|_{L^6}^3)  + \frac{D_i}{8}\|\Delta c_i\|_{L^2}^2
\\&\quad\quad\le  C\left(1 + \overline{c_i}^3  + \|\na c_i\|_{L^2}^2 + \|c_i - \overline{c_i}\|_{L^2} \|\na c_i\|_{L^2}^2 \right)\left(1 + \|\Lambda^{\frac{5}{2}}T\|_{L^2}^2\right) \|\Lambda^{\frac{5}{2}}T\|_{L^2}^2+ \frac{D_i}{8}\|\Delta c_i\|_{L^2}^2.
\end{aligned}
\ee 
The third logarithmic term is more delicate due to the change in behavior of $|\log c_i|$ near 1. In fact, we split the domain of integration into $\left\{c_i < 1 \right\} \cap \TT^2$ and $\left\{c_i \geq 1 \right\} \cap \TT^2$ and treat each term separately. On the one hand, we have 
\be 
\begin{aligned}
&\int_{\left\{c_i < 1 \right\} \cap \TT^2} \left(\na c_i \log c_i T^{-1} \cdot\na T \right) \Delta c_i 
\le  C\int_{\left\{c_i < 1 \right\} \cap \TT^2} |\na c_i| (\sqrt{c_i})^{-1} |T^{-1}| |\na T| \|\Delta c_i| 
\\&\quad\quad\le C\|\sqrt{T}\na \sqrt{c}_i\|_{L^2} \|\na T\|_{L^{\infty}}\|\Delta c_i\|_{L^2}
\le C\|\Lambda^{\frac{5}{2}}T\|_{L^2}^2 \|\sqrt{T} \na \sqrt{c}_i\|_{L^2}^2 + \frac{D_i}{8}\|\Delta c_i\|_{L^2}^2.
\end{aligned}
\ee 
On the other hand, it holds that
\be 
\begin{aligned}
&\int_{\left\{c_i \ge 1 \right\} \cap \TT^2} \left(\na c_i \log c_i T^{-1} \na T \right) \Delta c_i 
\le  C\int_{\left\{c_i \ge 1 \right\} \cap \TT^2} |\na c_i| \sqrt{c_i}|T^{-1}| |\na T| \|\Delta c_i| 
\\&\quad\quad\le C
\|\na c_i\|_{L^4}\|\sqrt{c_i}\|_{L^4} \|\na T\|_{L^{\infty}}\|\Delta c_i\|_{L^2}
\le C\|c_i\|_{L^2}^2\|\Lambda^{\frac{5}{2}}T\|_{L^2}^4 \| \na c_i\|_{L^2}^2 + \frac{D_i}{8}\|\Delta c_i\|_{L^2}^2.
\end{aligned}
\ee Putting all these estimates together, we obtain   
\be 
\frac{d}{dt}\|\na c_i\|_{L^2}^2 
\le C\mathcal{L}_1 \|\na c_i\|_{L^2}^2 
+ C\mathcal{L}_2
\ee where 
\be 
\mathcal{L}_1 = 1+\|\Delta u\|_{L^2}^2 + \|\na \rho\|_{L^2}^2 + \|\Lambda^{\frac{5}{2}}T\|_{L^2}^4  + \|\na \rho\|_{L^2}^2  \|\Lambda^{\frac{5}{2}}T\|_{L^2}^2 + \|c_i\|_{L^2}^2\|\Lambda^{\frac{5}{2}}T\|_{L^2}^4
\ee 
and 
\be 
\begin{aligned}
\mathcal{L}_2 &=  \|\na \rho\|_{L^2}^2 (1+ \|\Lambda^{\frac{5}{2}}T\|_{L^2}^2) +  \|\Lambda^{\frac{5}{2}}T\|_{L^2}^2 \|\sqrt{T} \na \sqrt{c}_i\|_{L^2}^2
\\&\quad\quad\quad\quad+\left(1   + \|\na c_i\|_{L^2}^2 + \|c_i - \overline{c_i}\|_{L^2} \|\na c_i\|_{L^2}^2 \right)\left(1 + \|\Lambda^{\frac{5}{2}}T\|_{L^2}^2\right) \|\Lambda^{\frac{5}{2}}T\|_{L^2}^2.
\end{aligned}
\ee  
Here we have used the fact that $\overline{c_i} = \overline{c_{i0}}$ is a constant.
By making use of the uniform bound \eqref{squarerootc}, together with the uniform boundedness of $u, c_i,  \nabla c_i, \nabla T$ in $L^2(0, \mathcal{T}, H^2),$ $L^{\infty}(0, \mathcal{T}, L^2)$, $L^2(0, \mathcal{T}, L^2)$, and $L^{\infty}(0, \mathcal{T}, H^{\frac{3}{2}}) \cap L^2(0, \mathcal{T}, H^{\frac{3}{2}})$, respectively and independently of $\mathcal{T}$, we infer that $\mathcal{L}_1$ and $\mathcal{L}_2$ are integrable in time from $0$ to $\mathcal{T}$, with bounds that do not depend on $\mathcal{T}$ but only on the initial data. Applying Gr\"onwall's inequality, we deduce that $\nabla c_i$ is bounded in $L^{\infty}(0, \mathcal{T}, L^2)\cap L^2(0,\mathcal T; H^1)$.

\appendix

\section{Auxiliary lemmas} \label{app1}
In this appendix, we collect some useful auxiliary lemmas.

\beg{lem} \label{molb}
Let $f \in L^2(\TT^2)$. Suppose that $f \geq a >0$ on $\TT^2$. Then it holds that 
\be 
\|\log \mathcal{J}_{\eta}f\|_{L^2}
\le \sqrt{|\TT^2|}|\log a| + \|f\|_{L^2}.
\ee
\end{lem}

\begin{proof}
  By the properties of the convolution operator $\mathcal{J}_{\eta}$, it holds that $\mathcal{J}_{\eta} f \ge a$ on $\TT^2$.  If $a<1$ and $\mathcal{J}_{\eta} f \in [a, 1]$, then $\log \mathcal{J}_{\eta}f \in [\log a, 0]$.  Consequently, it holds that 
  \be \la{o1p}
\int_{\left\{a \le \mathcal{J}_{\eta}f \le 1 \right\}} |\log \mathcal{J}_{\eta} f|^2 \le |\TT^2| |\log a|^2.
  \ee On the other hand, if $a<1$ and $\mathcal{J}_{\eta} f \ge 1$, or if $a\geq 1$ and $\j f \geq a \geq 1$, we have $|\log \j f| \le |\j f|$. As a result,
    \be \la{o2p}
\int_{\left\{\mathcal{J}_{\eta}f \ge 1 \right\}} |\log \mathcal{J}_{\eta} f|^2 \le \|\j f\|_{L^2}^2 \leq \|f\|_{L^2}^2.
  \ee Adding \eqref{o1p} and \eqref{o2p} gives the desired inequality.

\end{proof}

\begin{lem} \la{newloc}
Let $k \ge 2$ be an integer, and let
$C_1,C_2,C_3,C_4\geq0$. Suppose $y(t) \ge 0$ and $z(t)\geq 0$ obey 
\be 
y'(t) \le C_1y(t)^k + C_2 z(t), \qquad  \int_{0}^{t} z(s) ds \le C_3 + C_4t
\ee 
on a time interval $[0, \mathcal{T}]$.
If $y(0) \le c_0$, then there is a positive constant $K$ depending only on $k, c_0,  C_2, C_3$ and a time $\tilde{\mathcal{T}} \leq  \mathcal{T}$ depending only on $k, c_0,C_1, C_2, C_3, C_4$ such that 
\be \label{eqn:lemma-gronwall1}
y(t) \le K
\ee for any $t \in [0, \tilde{\mathcal{T}}]$. 
\end{lem}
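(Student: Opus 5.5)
The proof will be a comparison (continuity) argument. The forcing $z$ enters only through its time integral, so I will absorb it by a change of variables. Set $Z(t)=\int_0^t z(s)\,ds$ and $w(t)=y(t)-C_2 Z(t)$. Then $w(0)=y(0)\le c_0$ and
\[
w'(t)\le C_1 y(t)^k = C_1\bigl(w(t)+C_2Z(t)\bigr)^k .
\]
On $[0,\mathcal T]$ with $\mathcal T\le 1$ (which we may assume by shrinking), the hypothesis gives $Z(t)\le C_3+C_4$. The proposed constant is $K=2(c_0+C_2C_3+1)$.

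The key step is a bootstrap. Let $t^*$ be the supremum of times $s\le\mathcal T$ such that $y\le K$ on $[0,s]$. Since $y(0)\le c_0<K$ and $y$ is continuous, $t^*>0$. On $[0,t^*]$ we have $w'\le C_1K^k$, hence
\[
y(t)=w(t)+C_2Z(t)\le c_0+C_1K^k t+C_2\bigl(C_3+C_4t\bigr).
\]
Choose $\tilde{\mathcal T}\le\min\{1,\mathcal T\}$ so small that $(C_1K^k+C_2C_4)\tilde{\mathcal T}\le \tfrac12$. Then for $t\le\min\{t^*,\tilde{\mathcal T}\}$ we get
\[
y(t)\le c_0+C_2C_3+\tfrac12<K .
\]
By continuity, $y$ therefore stays strictly below $K$ slightly beyond $t^*$ whenever $t^*<\tilde{\mathcal T}$. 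This contradicts the definition of $t^*$, so $t^*\ge\tilde{\mathcal T}$, and $y\le K$ on $[0,\tilde{\mathcal T}]$.

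As stated, $K$ depends only on $c_0,C_2,C_3$, while $\tilde{\mathcal T}$ depends on $k,c_0,C_1,C_2,C_3,C_4$. This matches the lemma's claims, and the dependence on $k$ enters only through $K^k$.

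The main point to watch is that $C_4$ must not enter $K$. This is why the choice $\tilde{\mathcal T}\le 1$ is paired with absorbing the term $C_2C_4t$ into the $\tfrac12$ margin rather than into $K$. Another subtlety is continuity of $y$, which I will assume since the lemma is applied to continuous energies. If $y$ were only absolutely continuous, the same argument works after integrating the inequality directly.
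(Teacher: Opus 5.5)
Your argument is correct, but it takes a different route from the paper.

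The paper uses no continuity or bootstrap step. It shifts to $\tilde y = y + A$ with a large constant $A\ge 1$, so that after dividing by $\tilde y^k$ the forcing satisfies $C_2 z\,\tilde y^{-k}\le C_2A^{-k}z$. It then integrates the resulting inequality for $\tilde y^{1-k}/(1-k)$ once over $[0,t]$. Choosing $A$ with $C_2C_3A^{-k}\le \frac{1}{4(k-1)(c_0+A)^{k-1}}$, and then $\tilde{\mathcal T}$ small, yields $y\le 2^{1/(k-1)}c_0+(2^{1/(k-1)}-1)A$ directly on $[0,\tilde{\mathcal T}]$. The choice of $A$ is possible because $A^{-k}$ decays faster than $(c_0+A)^{1-k}$.

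\textbf{What the paper's route buys and costs.} It gets the bound in a single integration, with no supremum or extension step. However, it relies on $k>1$: it divides by $k-1$, and the choice of $A$ uses the decay comparison above. Its constant $K$ also depends on $k$, through $A$ and the factor $2^{1/(k-1)}$.

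\textbf{What your route buys and costs.} You integrate the inequality linearly under the a priori hypothesis $y\le K$ and place the $C_2C_3$ contribution directly into $K$. This gives the cleaner constant $K=2(c_0+C_2C_3+1)$, which does not depend on $k$. The argument works unchanged for any exponent $k>0$, and indeed for any nondecreasing, locally bounded nonlinearity in place of $C_1y^k$. The price is the continuation argument, which needs continuity of $y$; this is automatic here, since $y$ is differentiable. Both proofs integrate a differential inequality, so neither demands more time regularity than the other, and in the applications $y$ is smooth in time.

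\textbf{A minor point.} The restriction $\mathcal T\le 1$ and the bound $Z\le C_3+C_4$ are never used. You correctly work with $Z(t)\le C_3+C_4t$, and absorbing $C_2C_4t$ into the $\tfrac12$ margin relies only on the smallness of $\tilde{\mathcal T}$, not on $\tilde{\mathcal T}\le 1$.
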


\begin{proof}
   For each $t \in [0, \mathcal T]$, we define the modified quantity
   \be 
\tilde{y}(t) = y(t) + A
   \ee where $A \ge 1$ is a positive constant to be determined later. Then $\tilde{y}$ obeys 
   \be 
\tilde{y}'(t) \le C_1 \tilde{y}(t)^k + C_2 z(t).
   \ee Dividing both sides by $\tilde{y}^k$ and using the fact that $\tilde{y} \ge A$, we obtain 
   \be 
\frac{d}{dt} \frac{\tilde y^{1-k}}{1-k} \le C_1 + C_2 \frac{z}{\tilde{y}^k} 
\le C_1 + C_2 A^{-k} z.
   \ee Integrating in time from $0$ to $t$ gives rise to 
   \be 
-\frac{1}{(k-1)\tilde{y}(t)^{k-1}} + \frac{1}{(k-1)\tilde{y}(0)^{k-1}} \le C_1t + C_2 A^{-k} (C_3 + C_4t) 
\le (C_1 + C_2C_4)t + C_2C_3A^{-k}.
   \ee Consequently, it follows that 
   \be 
\frac{1}{(k-1)\tilde{y}(t)^{k-1}}
\ge \frac{1}{(k-1)(c_0 + A)^{k-1}} - (C_1 + C_2C_4)t - C_2C_3A^{-k}.
   \ee We choose $A = A(k, c_0,  C_2, C_3)$ sufficiently large such that 
   \be 
\frac{C_2C_3}{A^k} \le \frac{1}{4(k-1)(c_0 + A)^{k-1}} 
   \ee  and a time $\tilde{\mathcal T} = \tilde{\mathcal T} (k, c_0, C_1, C_2, C_3, C_4)$ sufficiently small such that 
   \be 
(C_1 + C_2 C_4) \tilde{\mathcal{T}} \le \frac{1}{4(k-1)(c_0 + A)^{k-1}}.
   \ee Therefore, it holds that 
   \be 
\tilde{y}(t) \le 2^{\frac{1}{k-1}}(c_0 + A)
   \ee on $[0, \tilde{\mathcal{T}}]$. This implies \eqref{eqn:lemma-gronwall1} with $K= 2^{\frac{1}{k-1}}c_0 +  (2^{\frac{1}{k-1}}-1) A >0.$
\end{proof}

\beg{lem}\label{lemma:iteration}
Suppose $(a_n(t))_{n\in\mathbb N}$ is a sequence of nonnegative smooth functions such that $a_n(0)=a$ is independent of $n$ and $a_0(t) = a$ for $t>0$ with the constant $a>0$. Assume that $a_{n+1}(t)$ obeys the differential inequality
\be 
\frac{d}{dt} a_{n+1} \le C_1a_n^m + C_2(a_n^k +1) a_{n+1} + C_3
\ee for any $n \in \N$, where  $C_1, C_2, C_3$ are some nonnegative constants (independent of $n$) and $m, k \in \N$. Then there exists a time $T_0= T_0(C_1,C_2,C_3,m,k,a)>0$ such that
\be 
\sup_{n\in\mathbb N}\sup\limits_{0\le t \le T_0}a_n(t) \le e^{4a}.
\ee

\end{lem}

\begin{proof}
    An application of Gr\"onwall's inequality gives the instantaneous bound 
    \be 
a_{n+1}(t) \le \left(a+ C_1 \int_{0}^{t}a_n^m ds + C_3 t \right) \exp \left\{C_2 \int_{0}^{t} (a_n^k +1) ds \right\}
    \ee for any $t \ge 0$. For $n \in \N$ and $T > 0$, let 
    \be 
A_n(T) = \sup \left\{a_n(t) : 0 \le t \le T \right\}.
    \ee It holds that 
    \be 
A_{n+1}(T) \le \exp \left\{a + C_1T A_n^m(T) + C_2TA_n^k(T) +(C_2 + C_3)T \right\}
    \ee for any $n \in \N$ and $T > 0$. Letting 
    \be 
T_0 = \min \left\{1,\frac{a}{1+C_2 + C_3}, \frac{a}{1+C_1 e^{4am}}, \frac{a}{1+C_2e^{4ak}} \right\},
    \ee we obtain 
    \be 
A_{n+1}(T_0) \le e^{4a}
    \ee via a classical induction argument. The latter gives the desired bound. \
\end{proof}

\section{Local existence of solutions to the mollified system}
\label{sec:appendix-b}

In this appendix, we prove the local existence of solutions to the $\eta$-regularized system for each $\eta>0$. We drop the superscript $\eta$ in order to make the notation more readable. As for the initial data, we keep the superscript $\eta$ to distinguish between the mollified and unmollified initial data.

\subsection{An iteration scheme}

For each $n\geq 0$, we consider the iterative system
\noeqref{eqn:npb-full-mo-iteration-1}
\noeqref{eqn:npb-full-mo-iteration-2}
\noeqref{eqn:npb-full-mo-iteration-3}
\noeqref{eqn:npb-full-mo-iteration-4}
\noeqref{eqn:npb-full-mo-iteration-5}
\begin{subequations}\label{sys:npb-full-mo-iteration}
    \begin{align}
        &\partial_t c_i^{(n+1)} + \mathcal J_\eta   u^{(n+1)}\cdot \nabla c_i^{(n+1)}  \nonumber
        \\
        &\hspace{1cm}= D_i\nabla\cdot\left(\nabla c_i^{(n+1)} + \frac{ez_i}{k_B T^{(n+1)}} c_i^{(n+1)} \nabla \j \phi^{(n)} + \chi_i^{(n)}c_i^{(n+1)}\log c_i^{(n)}\j\nabla \log  T^{(n+1)}\right),\label{eqn:npb-full-mo-iteration-1}
        \\
        &-\varepsilon\Delta \phi^{(n+1)} = \rho^{(n+1)} = \sum\limits_{i=1}^N e  z_i c_i^{(n+1)}, \label{eqn:npb-full-mo-iteration-2}
        \\
        &\partial_t u^{(n+1)} + \mathcal J_\eta u^{(n)}\cdot \nabla u^{(n+1)} - \nu\Delta u^{(n+1)} + \nabla p^{(n+1)}= g\alpha_T(T^{(n+1)}-T_r) \vec{k} - \mathcal J_\eta   \left( \rho^{(n)} \nabla \j \phi^{(n)} \right), \label{eqn:npb-full-mo-iteration-3}
        \\
        &\nabla\cdot u^{(n+1)} = 0,\label{eqn:npb-full-mo-iteration-4} 
        \\
        &\partial_t T^{(n+1)} + \mathcal J_\eta   u^{(n)} \cdot \nabla T^{(n+1)} - \kappa\Delta T^{(n+1)} = 0,\label{eqn:npb-full-mo-iteration-5}
    \end{align}
\end{subequations}
with initial conditions 
\[
(c_i^{(n+1)}(0),u^{(n+1)}(0),T^{(n+1)}(0))=(\j c_{i0},\j u_0,\j T_0):=(c^\eta_{i0},u^\eta_0,T^\eta_0).
\]
Here the cutoff $\chi_i^{(n)}:=\chi^\eta(c_i^{(n)})$ satisfies $\chi_i^{(n)} = 1$ when $c_i^{(n)}(x,t) \geq 2\eta$ and $\chi_i^{(n)} = 0$ when $c_i^{(n)}(x,t) \leq \eta$. Note that the term $\chi^{(n)} \log c_i^{(n)}$ is well-defined even when $c_i^{(n)} \le 0$ as the cutoff function nullifies the latter when $c_i^{(n)} \le \eta$. The constant $T_r= \overline{T_0^\eta}$ is fixed and independent of the spatial-temporal variables and the index $n$. The compatibility condition of the initial ionic concentrations \[
\sum_{i=1}^N\int_{\mathbb T^2}  z_i c_{i0}^\eta dx = \sum_{i=1}^N  \int_{\mathbb T^2}  z_i c_{i0} dx =0
\]
holds due to the properties of mollifiers.

In addition, we set
\[
 c_i^{(0)} = c^\eta_{i0}, \quad u^{(0)}= u^\eta_0, \quad T^{(0)} = T_0^\eta, \quad -\varepsilon\Delta \phi^{(0)} = \rho^{(0)} = \sum\limits_{i=1}^N e z_i c_i^{(0)},
\]
which are all constants in time. 

For $n=0$, as $u^{(0)}$ is smooth and \eqref{eqn:npb-full-mo-iteration-5} is a linear drift-diffusion equation with periodic boundary conditions and smooth initial conditions, it has a unique smooth solution $T^{(1)}$ for any $t\geq 0$. Moreover, as $T_0^\eta\geq T^*$, we have $T^{(1)}\geq T^*.$
Next, since equation \eqref{eqn:npb-full-mo-iteration-3} is linear in $u^{(1)}$ and has smooth forcing terms depending on $T^{(1)}$ and $c_i^{(0)}$, and $u^{(1)}$ is transported by a smooth velocity field $\mathcal J_\eta u^{(0)}$, we infer the existence of a unique smooth solution $u^{(1)}$ for any $t\geq 0$. As $T^{(1)}\geq T^*$, the terms $\frac{1}{T^{(1)}}$ and $\j\nabla \log T^{(1)}$ are smooth. 
The logarithmic term  $\chi^{(0)} \log c_i^{(0)}$ is smooth since it vanishes when $c_i^{(0)} \le \eta$.  %(indeed, as $c_i^{(0)}=c_{i0}^\eta\geq a>0$, we automatically have $\log c_i^{(0)}$ is smooth). 
Since equation \eqref{eqn:npb-full-mo-iteration-1} is linear in $c_i^{(1)}$ with all other functions in the equation being smooth, one can obtain a unique smooth solution $c_i^{(1)}$ for each $i=1,2,\dots,N$ and for all $t\geq 0$.

Next we show that $c_i^{(1)}\geq 0$ for all times. Indeed, we compute
\begin{align*}
    \int_0^{\mathcal T} \|\frac{1}{T^{(1)}}\nabla\j\phi^{(0)}\|^2_{L^\infty} + (1+ \|c_i^{(0)}\|_{L^\infty}^2) \|\j\nabla \log T^{(1)}\|_{L^\infty}^2 dt <\infty
\end{align*}
for any $\mathcal T>0$ since $T^{(1)}$, $\phi^{(0)}$, and $c_i^{(0)}$ are all smooth functions and $T^{(1)} \geq T^*$.
By virtue of Proposition~\ref{nonneg}, we can conclude that $c_i^{(1)}\geq 0$ on $[0,\mathcal T]$.

Having constructed global smooth solutions $c_i^{(1)}\geq 0$, $u^{(1)}$, $T^{(1)}\geq T^*$, $\rho^{(1)}$, and $\phi^{(1)}$ for the first iteration corresponding to $n=0$,  we can proceed in the same manner and obtain another family of solutions $c_i^{(2)}\geq 0$, $u^{(2)}$, $T^{(2)}\geq T^*$, $\rho^{(2)}$, and $\phi^{(2)}$ corresponding to the next iteration $n=1$. Repeating the same argument, one can obtain a sequence of smooth solutions $c_i^{(n)}$, $u^{(n)}$, $T^{(n)}$, $\rho^{(n)}$, and $\phi^{(n)}$ for each $n\in \NN$. In addition, it holds that $T^{(n)}\geq T^*$ and $c_i^{(n)} \geq 0$ for all times $t \ge 0$ and all $n \in \NN$. As $\overline{T^{(n)}} = \overline{T_0^\eta}$ and $\overline{c_i^{(n)}} = \overline{c_{i0}^\eta}$ are invariant in time, one has $\overline{u^{(n)}} = \overline{u_0^\eta} = 0$.

\subsection{Uniform energy estimates}\label{section:appendix-2}
Having constructed a sequence of solutions for $n\in \mathbb N$, we now establish the uniform-in-$n$ energy estimates that are needed to prove the local well-posedness and regularity of the $\eta$-mollified system. These estimates will be performed in several steps.

\smallskip

\noindent {\bf Step 1. Uniform bounds for $\|T^{(n+1)}\|_{L^2}$.} By integrating by parts and using the divergence-free condition of $\mathcal J_\eta  u^{(n)}$, we infer that the $L^2$ norm of the iterative temperature $T^{(n+1)}$ evolves according to 
$$
\frac{1}{2} \frac{d}{d t}\|T^{(n+1)}\|_{L^2}^2+\kappa\|\nabla T^{(n+1)}\|_{L^2}^2=0,
$$
which implies that for any $t \ge 0$,
\begin{align}\label{est:iteration-TL2}
    \|T^{(n+1)}(t)\|_{L^2}^2 + 2\kappa \int_0^t \|\nabla T^{(n+1)}(s)\|_{L^2}^2 ds = \|T^\eta_0\|_{L^2}^2.
\end{align}

\noindent {\bf Step 2. Uniform bounds for $\|c_i^{(n+1)}\|_{L^2}$.}
The $L^2$ norm of each ionic concentration $c_i^{(n+1)}$ satisfies
\begin{align*}
    \frac{1}{2} \frac{d}{d t}\|c_i^{(n+1)}\|_{L^2}^2 + D_i \|\nabla c_i^{(n+1)}\|_{L^2}^2= &-D_i\frac{ez_i}{k_B} \int_{\mathbb{T}^2} \frac{1}{T^{(n+1)}} c_i^{(n+1)} \nabla c_i^{(n+1)} \nabla \j \phi^{(n)}  dx
    \\
    &-D_i \int_{\mathbb{T}^2} c_i^{(n+1)} \nabla c_i^{(n+1)} \chi_i^{(n)} \log c_i^{(n)} \j \nabla \log T^{(n+1)} dx ,
\end{align*}
due to the divergence-free condition obeyed by $\mathcal J_\eta   u^{(n+1)}$. 
Applications of the H\"older, Ladyzhenskaya, and Young inequalities give rise to the energy estimate
\begin{align*}
   &\frac{1}{2} \frac{d}{d t}\|c_i^{(n+1)}\|_{L^2}^2 + \frac{1}{2}D_i \|\nabla c_i^{(n+1)}\|_{L^2}^2 
   \\
   \leq &C \left\|\frac{1}{T^{(n+1)}}\right\|_{L^\infty}^2 \|\nabla \j \phi^{(n)} \|_{L^\infty}^2  \|c_i^{(n+1)}\|_{L^2}^2 + C\|\j \nabla\log T^{(n+1)}\|_{L^\infty}^2 \|\chi_i^{(n)} \log c_i^{(n)}\|_{L^4}^2 \|c_i^{(n+1)}\|_{L^2}^2
   \\
   &+ C\|\j \nabla\log T^{(n+1)}\|_{L^\infty}^4 \|\chi_i^{(n)} \log c_i^{(n)}\|_{L^4}^4 \|c_i^{(n+1)}\|_{L^2}^2.
\end{align*}
Using the smoothing  properties of the mollifiers and employing a standard duality argument, one has
\begin{align}
    &\|\nabla \j \phi^{(n)} \|_{L^\infty} \leq C\|\j  \phi^{(n)}\|_{H^3} \leq C_\eta \| \rho^{(n)}\|_{H^{-2}}\leq C_\eta \| \rho^{(n)}\|_{L^1} \leq C_\eta \sum\limits_{i=1}^N \|c_i^{(n)}\|_{L^1} = C_\eta \sum\limits_{i=1}^N \overline{c_{i0}^\eta}, \label{est:iteration-duality}
\end{align}
and
\begin{align*}
    \|\j \nabla \log T^{(n+1)}\|_{L^\infty} \leq C \|\j \log T^{(n+1)}\|_{H^3} \leq C_\eta\|\log T^{(n+1)}\|_{L^2} \leq C_\eta (1+ \|T^{(n+1)}\|_{L^2}),
\end{align*}
 where we have used $c_i^{(n+1)}\geq 0$, $T^{(n+1)}\geq T^*$, and the fact that the spatial means of $c_i^{(n+1)}$ are invariant in time. Moreover, thanks to the cutoff function, the logarithmic term does not blow up and can be bounded uniformly in $n$ as follows,
 \begin{align*}
    \|\chi_i^{(n)} \log c_i^{(n)}\|_{L^4} \leq C_\eta (1+ \|c_i^{(n)}\|_{L^1}) = C_\eta (1+\overline{c_{i0}^\eta}).
\end{align*}
 Consequently, we deduce that
\begin{align*}
   \frac{d}{d t}\|c_i^{(n+1)}\|_{L^2}^2 + D_i \|\nabla c_i^{(n+1)}\|_{L^2}^2 \leq  C_\eta(1+\sum\limits_{j=1}^N \overline{c_{j0}^\eta}^2  + \|T^{(n+1)}\|_{L^2}^4 \overline{c_{i0}^\eta}^4) \|c_i^{(n+1)}\|_{L^2}^2.
\end{align*}
Applying Gr\"onwall's inequality and using \eqref{est:iteration-TL2}, we have
\begin{align}\label{est:iteration-ciL2}
    \|c_i^{(n+1)}(t)\|_{L^2}^2 + \int_0^t D_i\|\nabla c_i^{(n+1)}(s)\|_{L^2}^2 ds \leq \|c_{i0}^\eta\|_{L^2}^2 e^{\widetilde{C}t},
\end{align} 
for any $t \ge 0$, where $\widetilde{C}$ only depends on the initial conditions and $\eta$ but is independent of $n$.

\smallskip

\noindent {\bf Step 3. Uniform bounds for $\|u^{(n+1)}\|_{L^2}$.} 
The $L^2$ norm of the iterative velocity $u^{(n+1)}$ satisfies
\begin{align*}
    \frac{1}{2} \frac{d}{d t}\|u^{(n+1)}\|_{L^2}^2+\nu\|\nabla u^{(n+1)}\|_{L^2}^2=&-\int_{\mathbb{T}^2} \mathcal J_\eta  \left(\rho^{(n)} \nabla\j \phi^{(n)} \right)\cdot u^{(n+1)} dx 
    \\
    &+ \int_{\mathbb{T}^2} g\alpha_T (T^{(n+1)}-T_r) u^{(n+1)}_2 dx.
\end{align*}
Using the mean-free property obeyed by $u^{(n+1)}$, and applying the  H\"older, Young, Poincar\'e and Sobolev inequalities, we have
\begin{align*}
    &\frac{1}{2} \frac{d}{d t}\|u^{(n+1)}\|_{L^2}^2+\nu\|\nabla u^{(n+1)}\|_{L^2}^2
    \\
    \leq &C \Big(\sum\limits_{i=1}^N \|c_i^{(n)}\|_{L^2} \|\nabla \j \phi^{(n)} \|_{L^\infty} + \|T^{(n+1)}\|_{L^2} \Big) \|u^{(n+1)}\|_{L^2}
    \\
    \leq &C_\eta(\sum\limits_{i=1}^N \|c_i^{(n)}\|_{L^2}^2 + \|T^{(n+1)}\|_{L^2}^2) + \frac\nu2 \|\na u^{(n+1)}\|_{L^2}^2,
\end{align*}
where we exploited the uniform bound  \eqref{est:iteration-duality} obeyed by the mollified iterative potential. As $c_i^{(n)}$ obeys the same estimate derived for $c_i^{(n+1)}$ in \eqref{est:iteration-ciL2}, and thanks to \eqref{est:iteration-TL2}, the latter differential inequality boils down to
\begin{align*}
    \frac{d}{d t}\|u^{(n+1)}\|_{L^2}^2+\nu\|\nabla u^{(n+1)}\|_{L^2}^2\leq C_\eta(\sum_{i=1}^N\|c_{i0}^\eta\|_{L^2}^2 e^{\widetilde{C}t} + \|T_0^\eta\|_{L^2}^2).
\end{align*}
An application of Gr\"onwall's inequality yields 
\begin{align}\label{est:iteration-uL2}
    \|u^{(n+1)}(t)\|_{L^2}^2 + \nu \int_0^t \|\nabla u^{(n+1)}(s)\|_{L^2}^2 ds \leq \|u_0^\eta\|_{L^2}^2 + C_\eta(\sum\limits_{i=1}^{N}\|c_{i0}^\eta\|_{L^2}^2  e^{\widetilde{C}t} + \|T_0^\eta\|_{L^2}^2 t).
\end{align} 

From \eqref{est:iteration-TL2}, \eqref{est:iteration-ciL2}, and \eqref{est:iteration-uL2}, one can infer that for any time $\mathcal T>0$, $\{(c_i^{(n+1)}, u^{(n+1)}, T^{(n+1)})\}$ are uniformly bounded in $L^\infty(0,\mathcal T;L^2) \cap L^2(0,\mathcal T; H^1)$. Observe that at the $L^2$ level, these bounds are global in time.

\smallskip

\noindent {\bf Step 4. Higher order estimates.} Next we will perform $H^m$ estimates for $\{(c_i^{(n+1)}, u^{(n+1)}, T^{(n+1)})\}$ with $m\geq 2$. Consider a multi-index $\alpha\in \mathbb N^2$ such that $|\alpha|\leq m$. %Thanks to the Poincar\'e inequality and since we have already obtained bounds in $L^2$, we can directly estimate the homogeneous $H^m$ norms.

The $H^m$ evolution of $T^{(n+1)}$ reads
\begin{align*}
    \frac12 \frac{d}{dt} \| T^{(n+1)}\|_{H^m}^2 + \kappa \|\nabla T^{(n+1)}\|_{H^m}^2 = \sum\limits_{0\leq |\alpha|\leq m}\int_{\TT^2} D^\alpha (\j u^{(n)}  T^{(n+1)})\cdot D^\alpha \nabla T^{(n+1)} dx.
\end{align*}
As $\mathcal J_\eta  u^{(n)}$ is divergence-free, we have $\langle  \mathcal J_\eta  u^{(n)} \cdot \nabla D^\alpha T^{(n+1)}, D^\alpha T^{(n+1)}\rangle = 0$.  We use the H\"older and the Sobolev inequalities to  bound 
\begin{equation}\label{est-mollifed-reg-1}
    \begin{split}
        &\left|\sum\limits_{0\leq |\alpha|\leq m} \langle D^\alpha (\mathcal J_\eta  u^{(n)} \cdot \nabla T^{(n+1)}), D^\alpha T^{(n+1)}\rangle\right| 
        \\
        = &\left| \sum\limits_{0\leq |\alpha|\leq m} \sum\limits_{0<\beta\leq \alpha} \langle D^\beta \mathcal J_\eta  u^{(n)} \cdot D^{\alpha-\beta} \nabla T^{(n+1)}, D^\alpha T^{(n+1)}\rangle\right|
   \\
   \leq & C \sum\limits_{0\leq |\alpha|\leq m} \sum\limits_{0<\beta\leq \alpha} \|D^\beta \mathcal J_\eta  u^{(n)}\|_{H^2} \|D^{\alpha-\beta} \nabla T^{(n+1)}\|_{L^2} \|D^\alpha T^{(n+1)}\|_{L^2}
   \\
   \leq & C_{m,\eta} \|u^{(n)}\|_{L^2} \|T^{(n+1)}\|_{H^m}^2.
    \end{split}
\end{equation}
Thus it follows that
\begin{align*}
    \frac{d}{dt}\|T^{(n+1)}\|_{H^m}^2 + 2\kappa \|\nabla T^{(n+1)}\|_{H^m}^2 \leq  C_{m,\eta} \|u^{(n)}\|_{L^2} \|T^{(n+1)}\|_{H^m}^2.
\end{align*}
Application of Gr\"onwall's inequality and the regularity criterion $u^{(n)}\in L^\infty(0,\mathcal T; L^2)$ (that holds for any $\mathcal T>0$) yields
\begin{equation}\label{est-mollifed-reg-2}
    \sup_{0\leq t\leq\mathcal T}\|T^{(n+1)}(t)\|_{H^m}^2 + 2\kappa \int_0^{\mathcal T} \|\nabla T^{(n+1)}(s)\|_{H^m}^2 ds \leq C,
\end{equation}
with $C$ independent of $n$. Note that here $\mathcal T>0$ is arbitrary. %Here we have used the fact that $\|u_0^\eta\|_{L^2} \leq \|u_0\|_{L^2}$ and $\|T_0^\eta\|_{H^m} \leq C_{m,\eta}\|T_0\|_{L^2}$.
%Typically, as the initial condition $T_0^\eta \in C^\infty(\mathbb T^2)$, one can perform the estimate above for any $m\in \mathbb N$ and eventually conclude that
% \begin{equation}\label{mollfied-T-reg}
%     T \in C([0,\mathcal T]; C^\infty(\mathbb T^2)).
% \end{equation}

Next, we consider the $H^m$ evolution of $u^{(n+1)}$, which is given by 
\begin{equation} \label{est-mollifed-reg-4}
    \begin{split}
        &\frac{d}{dt}\|u^{(n+1)}\|_{H^m}^2 + 2\nu \|\nabla u^{(n+1)}\|_{H^m}^2 
     \\
     = &- \sum\limits_{0\leq |\alpha|\leq m} \langle D^\alpha (\mathcal J_\eta  u^{(n)} \cdot \nabla u^{(n+1)}), D^\alpha u^{(n+1)}\rangle 
     + \sum\limits_{0\leq |\alpha|\leq m}  g\alpha_T \langle D^\alpha( T^{(n+1)}-T_r), D^\alpha u^{(n+1)}_2 \rangle 
     \\
     &- \sum\limits_{0\leq |\alpha|\leq m} \langle D^\alpha\mathcal J_\eta \left(\rho^{(n)}\nabla\j \phi^{(n)}\right), D^\alpha u^{(n+1)} \rangle. 
    \end{split}
\end{equation}
The first nonlinear term on the right-hand side can be estimated similarly as in \eqref{est-mollifed-reg-1}, and one obtains
\begin{align*}
    \left|\sum\limits_{0\leq |\alpha|\leq m} \langle D^\alpha (\mathcal J_\eta  u^{(n)} \cdot \nabla u^{(n+1)}), D^\alpha u^{(n+1)}\rangle\right| \leq C_{m,\eta} \|u^{(n)}\|_{L^2} \|u^{(n+1)}\|_{H^m}^2.
\end{align*}
By H\"older's and Young's inequalities, we have
\begin{align*}
    \left| \sum\limits_{0\leq |\alpha|\leq m}  g\alpha_T \langle D^\alpha( T^{(n+1)}-T_r), D^\alpha u^{(n+1)}_2 \rangle \right| 
    \leq  C  \|T^{(n+1)}\|_{H^m}^2 + C\|u^{(n+1)}\|_{H^m}^2.
\end{align*}
Using in addition the bound \eqref{est:iteration-duality}, we estimate
\begin{align*}
    &\left|\sum\limits_{0\leq |\alpha|\leq m} \langle D^\alpha\mathcal J_\eta \left(\rho^{(n)}\nabla\j \phi^{(n)}\right), D^\alpha u^{(n+1)} \rangle  \right| 
    \\
    \leq &C_{m,\eta}\|\rho^{(n)}\|_{L^2}\|\nabla\j \phi^{(n)}\|_{L^\infty} \|u^{(n+1)}\|_{H^m}
    \leq C_{m,\eta} \left(\sum\limits_{i=1}^N \|c_i^{(n)}\|_{L^2}^2  + \|u^{(n+1)}\|_{H^m}^2\right).
\end{align*}
Combining the estimates above for the right-hand side of \eqref{est-mollifed-reg-4}, one has
\begin{align*}
     &\frac{d}{dt}\|u^{(n+1)}\|_{H^m}^2 + 2\nu \|\nabla u^{(n+1)}\|_{H^m}^2 
     \\
     \leq &C_{m,\eta} (\sum\limits_{i=1}^N \|c_i^{(n)}\|_{L^2}^2 + \|T^{(n+1)}\|_{H^m}^2) + C_{m,\eta}(1+\|u^{(n)}\|_{L^2}) \|u^{(n+1)}\|_{H^m}^2.
\end{align*}
By Gr\"onwall's inequality and thanks to \eqref{est:iteration-ciL2} and \eqref{est-mollifed-reg-2}, we deduce that
\begin{align*}
    \sup_{0\leq t\leq\mathcal T}\|u^{(n+1)}(t)\|_{H^m}^2 + 2\nu \int_0^{\mathcal T} \|\nabla u^{(n+1)}(s)\|_{H^m}^2 ds \leq C,
\end{align*} 
with $C$ independent of $n$. Note that here $\mathcal T>0$ is arbitrary.
% As $u_0^\eta \in C^\infty(\mathbb T^2)$, the above is true for any $m\in \mathbb N$, and therefore,
% \begin{equation}\label{mollfied-u-reg}
%     u \in C([0,\mathcal T]; C^\infty(\mathbb T^2)).
% \end{equation}

Now we move to the $H^m$ estimate of $c_i^{(n+1)}$. We have
\begin{align*}
    &\frac12 \frac{d}{dt} \|c_i^{(n+1)}\|_{H^m}^2 + D_i \|\nabla c_i^{(n+1)}\|_{H^m}^2 
    \\
    = &-\sum\limits_{0\leq |\alpha|\leq m}\int_{\TT^2} D^\alpha (\j u^{(n+1)} \cdot\nabla c_i^{(n+1)})\cdot D^\alpha c_i^{(n+1)} dx 
    \\
    &- \frac{D_i ez_i}{k_B} \sum\limits_{0\leq |\alpha|\leq m} \int_{\TT^2}D^\alpha( \frac{1}{T^{(n+1)}} c_i^{(n+1)} \nabla \j\phi^{(n)}) \cdot D^\alpha\nabla c_i^{(n+1)} dx
    \\
    &-D_i\sum\limits_{0\leq |\alpha|\leq m} \int_{\TT^2} D^\alpha(\chi_i^{(n)}c_i^{(n+1)}\log c_i^{(n)}\j\nabla \log  T^{(n+1)})  \cdot D^\alpha\nabla c_i^{(n+1)}dx
\end{align*}
The first nonlinear term on the right-hand side can be estimated similarly as in \eqref{est-mollifed-reg-1}, and one can get
\begin{align*}
    \left|\sum\limits_{0\leq |\alpha|\leq m} \langle D^\alpha (\mathcal J_\eta  u^{(n+1)} \cdot \nabla c^{(n+1)}_i), D^\alpha c^{(n+1)}_i\rangle\right| \leq C_{m,\eta} \|u^{(n+1)}\|_{L^2} \|c_i^{(n+1)}\|_{H^m}^2 \leq C_{m,\eta} \|c_i^{(n+1)}\|_{H^m}^2 .
\end{align*}
For any $s\in\mathbb N$ and $|\alpha|=s$, the derivative $D^\alpha (\frac1f)$ consists of terms whose denominators are $f^{s'}$ with $s'\leq s+1$, and whose numerators are the product of derivatives of $f$ with order at most $s$. Thanks to the fact that $T^{(n+1)}\geq T^*$ and \eqref{est-mollifed-reg-2}, one can conclude that
\[
  \left\|D^\alpha (\frac1{T^{(n+1)}})\right\|_{L^2} \leq C \left\| \frac1{T^{(n+1)}}\right\|_{H^{m}} \leq C_{m,\eta,\mathcal T, \|u_0\|_{L^2}, \|T_0\|_{L^2}, T^*}.
\]
Using an estimate similar to \eqref{est:iteration-duality} with $3$ replaced by $m+1$, we have, for any $|\alpha|\leq m$, that
\begin{align*}
    &\|D^\alpha( \frac{1}{T^{(n+1)}} c_i^{(n+1)} \nabla \j\phi^{(n)})\|_{L^2} 
    \\
    \leq &C \left\|\frac{1}{T^{(n+1)}}\right\|_{H^m} \|c_i^{(n+1)}\|_{H^m} \| \j\phi^{(n)}\|_{H^{m+1}} \leq C \|c_i^{(n+1)}\|_{H^m},
\end{align*}
where the constant $C$ depends on $m,\eta,\mathcal T, T^*$, and the initial conditions. For the last nonlinear term on the right-hand side, we apply Proposition~\ref{prop:est-logterm} to get
\begin{align*}
    \|D^\alpha(\chi_i^{(n)}c_i^{(n+1)}\log c_i^{(n)}\j\nabla \log  T^{(n+1)})\|_{L^2} \leq &C_{\eta,m} \|c_i^{(n+1)}\|_{H^m}(1+\|c_i^{(n)}\|^m_{H^m})  \|\j \nabla \log T^{(n+1)}\|_{H^m}
    \\
    \leq &C_{\eta,m} \|c_i^{(n+1)}\|_{H^m}(1+\|c_i^{(n)}\|^m_{H^m}) 
\end{align*}
thanks to \eqref{est-mollifed-reg-2}. Therefore, by applying H\"older's and Young's inequalities we have
\begin{align*}
    \frac{d}{dt} \|c_i^{(n+1)}\|_{H^m}^2 + D_i \|\nabla c_i^{(n+1)}\|_{H^m}^2 \leq C(1+\|c_i^{(n)}\|^{2m}_{H^m}) \|c_i^{(n+1)}\|_{H^m}^2,
\end{align*}
where the constant $C$ depends on $m,\eta,\mathcal T, T^*$, and the initial conditions, but not on $n$. Taking the summation over $i$, we have
\begin{align}\label{iter-old}
    \frac{d}{dt} \sum_{i=1}^N\|c_i^{(n+1)}\|_{H^m}^2 + \sum_{i=1}^N D_i \|\nabla c_i^{(n+1)}\|_{H^m}^2\leq C(1+\sum_{i=1}^N\|c_i^{(n)}\|^{2m}_{H^m}) \sum_{i=1}^N\|c_i^{(n+1)}\|_{H^m}^2.
\end{align}
For each $n\in\mathbb N$, denote by 
\[
a_n(t) := \sum_{i=1}^N\|c_i^{(n)}(t)\|_{H^m}^2,\quad a :=a_n(0)= \sum_{i=1}^N\|c_i^{(n)}(0)\|_{H^m}^2 = \sum_{i=1}^N\|c_{i0}^\eta\|_{H^m}^2.
\]
Recall that for step $n=0$ we have $a_0(t) = a$. Then \eqref{iter-old} implies that
\begin{align*}
    \frac d{dt} a_{n+1} \leq C(1+ a_n^{m}) a_{n+1}.
\end{align*}
If $a>0$, by applying Lemma~\ref{lemma:iteration} we conclude there exists a time $\mathcal T_{\eta,m}>0$ such that 
\begin{align}\label{appendix:uni-bdd-ci}
    \sup_{n\in\mathbb N}\sup_{0\leq t \leq T_{\eta,m}} a_n(t) \leq C_0
\end{align}
for some constant $C_0>0$ that is independent of $n$. If $a=0$, then clearly $a_n(t)=0$ for all $n\in\mathbb N$ and any $t\geq 0$. Thus \eqref{appendix:uni-bdd-ci} still holds. Moreover, integrating \eqref{iter-old} in time from $0$ to $\mathcal T_{\eta,m}$ and using \eqref{appendix:uni-bdd-ci}, we obtain that 
\[
\sup_{n\in\mathbb N}\int_0^{\mathcal T_{\eta,m}} \sum_{i=1}^N D_i \|\nabla c_i^{(n+1)}\|_{H^m}^2 \leq \widetilde{C_0}
\]
for some constant $\widetilde{C_0}>0$ that is independent of $n$.

In conclusion, for each fixed $\eta>0$ and $m\in\mathbb N$, the family of iterative solutions $\{u^{(n)}, T^{(n)}\}$ is uniformly bounded in $L^\infty (0,\mathcal T; H^m)\cap L^2(0,\mathcal T; H^{m+1})$ for any $\mathcal T>0$, and 
there exists a time $\mathcal T_{\eta,m}>0$ that is independent of $n$ such that $\{c_i^{(n)}\}$ is uniformly bounded in $L^\infty (0,\mathcal T_{\eta,m}; H^m)\cap L^2(0,\mathcal T_{\eta,m}; H^{m+1})$. Without loss of generality, we assume $\mathcal T_{\eta,m}$ is decreasing as $m$ increases.

\medskip

\noindent {\bf Step 5. Estimates of time derivatives.} In order to prove the strong convergence through the Aubin-Lions compactness theorem, we will
derive uniform-in-$n$ bounds for the time derivatives. It will be enough to establish the bounds in $L^2$. We start with $\partial_t T^{(n+1)}$, which is controlled by
\begin{align*}
    \|\partial_t T^{(n+1)}\|_{L^2} \leq \|\j u^{(n)} \cdot \nabla T^{(n+1)}\|_{L^2} + \kappa \|\Delta T^{(n+1)}\|_{L^2}\leq C(\|u^{(n)}\|_{L^2} \|T^{(n+1)}\|_{H^2} + \|T^{(n+1)}\|_{H^2}).
\end{align*}
This implies that $\{\partial_t T^{(n+1)}\}$ are uniformly bounded in $L^\infty(0,\mathcal T; L^2)$ for any $\mathcal T>0$ due to the uniform bounds of $\{u^{(n)},T^{(n)}\}$ derived in Step 4.

For $\partial_t u^{(n+1)}$, we consider a test function $\psi\in H$ and estimate
\begin{align*}
    &|\langle \partial_t u^{(n+1)}, \psi\rangle| 
    \\
    \leq &|\langle \j u^{(n)}\cdot\nabla u^{(n+1)}, \psi\rangle| + \nu |\langle\Delta u^{(n+1)}, \psi\rangle| + g\alpha_T |\langle T^{(n+1)}-T_r, \psi_2\rangle| + |\j(\rho^{(n)}\nabla\j\phi^{(n)}),\psi\rangle|
    \\
    \leq & C\left(\|u^{(n)}\|_{L^2}\|u^{(n+1)}\|_{H^1} + \|u^{(n+1)}\|_{H^2} + \|T^{(n+1)}\|_{L^2} + \sum_{i=1}^N \|c_i^{(n)}\|_{L^2}\sum_{i=1}^N \overline{c_{i0}^\eta}\right)\|\psi\|_{L^2}.
\end{align*}
Due to the uniform bounds of $\{u^{(n)},T^{(n)}\}$ and \eqref{est:iteration-ciL2}, we have $\{\partial_t u^{(n+1)}\}$ uniformly bounded in $L^\infty(0,\mathcal T; L^2)$ for any $\mathcal T>0$.

Finally, for $\partial_t c_i^{(n+1)}$, we have 
\begin{align*}
    \|\partial_t c_i^{(n+1)}\|_{L^2} \leq &\|\j u^{(n+1)}\cdot\nabla c_i^{(n+1)}\|_{L^2}  
    \\
    &+ D_i \left\|\nabla c_i^{(n+1)}+\frac{ez_i}{k_B T^{(n+1)}} c_i^{(n+1)} \nabla \j \phi^{(n)} + \chi_i^{(n)}c_i^{(n+1)}\log c_i^{(n)}\j\nabla \log  T^{(n+1)}\right\|_{H^1}
    \\
    \leq &C\Big(\|u^{(n+1)}\|_{H^2} \|\na c_i^{(n+1)}\|_{L^2} + \|c_i^{(n+1)}\|_{H^2} + \|\frac1{T^{(n+1)}}\|_{H^2}\|c_i^{(n+1)}\|_{H^2} \sum_{i=1}^N \overline{c_{i0}^\eta} 
    \\
    &\hspace{1cm}+(|\log\eta|+\|c_i^{(n)}\|_{H^2})\|c_i^{(n+1)}\|_{H^2}(|\log T^*|+\|T^{(n+1)}\|_{H^2})\Big).
\end{align*} 
Thanks to the uniform bounds in Step 4, we have $\{\partial_t c_i^{(n+1)}\}$ uniformly bounded in $L^\infty(0,\mathcal T_{\eta,2}; L^2)$ for some $\mathcal T_{\eta,2}>0$. Since we only have local in time uniform bounds for higher order norms of $c_i^{(n)}$, we end up with local in time uniform bounds for $\|\partial_t c_i^{(n+1)}\|_{L^2}$.

\subsection{Local existence of solutions} 
The sequence of iterative solutions $\left\{c_i^{(n)}, u^{(n)}, T^{(n)} \right\}$ is a contraction in $C(0, T_{\eta}; L^2)$  on a short time interval $[0, T_{\eta}]$, a fact that follows from the uniform-in-$n$ boundedness of the solutions and classical energy arguments. Consequently, all converging subsequences in certain functional spaces will converge to the same limit on $[0, T_{\eta}]$.

In view of the uniform-in-$n$ bounds of $\{c_i^{(n)},u^{(n)}, T^{(n)}\}$ and their derivatives, we can apply the Banach-Alaoglu and Aubin-Lions theorems and deduce that for each fixed $\eta>0$ and $m\geq 2$, there exist a time $T_{\eta,m}>0$, a limit $\{c^\eta_i,u^\eta,T^\eta\}$, and a subsequence of $\{c_i^{(n)},u^{(n)}, T^{(n)}\}$, still indexed by $n$, such that 
\begin{equation}\label{appendix:convergence}
    \begin{split}
        &(c_i^{(n)}, u^{(n)}, T^{(n)}) \to (c^\eta_i, u^\eta, T^\eta) \;\; \text{in} \;\; C([0,\mathcal T_{\eta,m}]; H^{m-1}),
    \\
    &(c_i^{(n)}, u^{(n)}, T^{(n)}) \overset{\ast}{\rightharpoonup} (c^\eta_i, u^\eta, T^\eta) \;\; \text{in} \;\;  L^\infty(0,\mathcal T_{\eta,m};H^m),
    \\
    &(c_i^{(n)}, u^{(n)}, T^{(n)}) \;{\rightharpoonup} \;(c^\eta_i, u^\eta, T^\eta) \;\; \text{in} \;\;  L^2(0,\mathcal T_{\eta,m};H^{m+1}).
    \end{split}
\end{equation}
As $c_i^{(n)}(x,t)\geq 0$ and $T^{(n)}(x,t)\geq T^*$ for all $(x,t)\in \TT^2\times [0,\mathcal T_{\eta,m}]$, it follows that $c_i(x,t)\geq 0$ and $T(x,t)\geq T^*$ for a.e. $(x,t)\in\mathbb T^2\times [0,\mathcal T_{\eta,m}]$. Since $u^{(n)}$ has zero mean, it follows that $u^\eta$ has zero mean.

In addition, since the solutions $\{\phi^{(n)}\}$ to \eqref{eqn:npb-full-mo-iteration-2} are uniformly bounded in $L^\infty(0,\mathcal T_{\eta,m};H^{m+2})$, we infer the existence of a limit $\phi$ such that $\phi^{(n)}\overset{\ast}{\rightharpoonup} \phi^\eta$ in $L^\infty(0,\mathcal T_{\eta,m};H^{m+2})$. Define $\rho^\eta = \sum_{i=1}^N e z_i c^\eta_i$ and $\widetilde{\rho^\eta}=-\varepsilon\Delta \phi^\eta$. Then by uniqueness of limits we have $\rho^\eta=\widetilde{\rho^\eta}$, and therefore $ -\varepsilon\Delta \phi^\eta= \rho^\eta =\sum_{i=1}^N ez_i c_i^\eta$.

As $\{c_i^{(n+1)},u^{(n+1)},T^{(n+1)}\}$ is a sequence of solutions to the iterative system \eqref{sys:npb-full-mo-iteration},
for any time $\mathcal T\in(0,\mathcal T_{\eta,m}]$ and any test functions $\psi,\varphi \in C^\infty([0,\mathcal T]\times \TT^2)$ such that $\psi(\mathcal T) = \varphi(\mathcal T)=0$ and $\nabla\cdot\varphi=0$, we take the space-time inner product of equations of \eqref{eqn:npb-full-mo-iteration-1}, \eqref{eqn:npb-full-mo-iteration-3}, and \eqref{eqn:npb-full-mo-iteration-5} with $\psi$, $\varphi$, and $\psi$ respectively over $[0,\mathcal T]\times \mathbb T^2$, and we obtain, via integration by parts, the following weak formulations: 
\begin{subequations}
    \begin{align}
        &\int_0^\mathcal T\langle\partial_t c_i^{(n+1)}, \psi\rangle - \langle\mathcal J_\eta   u^{(n+1)}\cdot \nabla \psi, c_i^{(n+1)} \rangle  \nonumber
        \\
        & +D_i\left\langle\nabla c_i^{(n+1)} + \frac{ez_i}{k_B T^{(n+1)}} c_i^{(n+1)} \nabla \j \phi^{(n)} + \chi_i^{(n)}c_i^{(n+1)}\log c_i^{(n)}\j\nabla \log  T^{(n+1)},\nabla\psi \right\rangle dt =0, \label{appendix:1}
        \\
        &\int_0^\mathcal T\langle\partial_t u^{(n+1)},\varphi\rangle -\langle \mathcal J_\eta u^{(n)}\cdot \nabla \varphi,  u^{(n+1)}\rangle + \langle \nu\na u^{(n+1)} ,\na\varphi\rangle \nonumber
        \\
        &\hspace{1cm}-\langle g\alpha_T(T^{(n+1)}-T_r) ,\varphi_2\rangle + \left\langle \mathcal J_\eta   \left( \rho^{(n)} \nabla \j \phi^{(n)} \right) , \varphi\right\rangle dt =0, \label{appendix:2}
        \\
        &\int_0^\mathcal T\langle\partial_t T^{(n+1)},\psi\rangle - \langle \mathcal J_\eta   u^{(n)} \cdot \nabla \psi, T^{(n+1)}\rangle + \kappa \langle\na T^{(n+1)} ,\na\psi\rangle dt = 0. \label{appendix:3}
    \end{align}
\end{subequations}

Now let's take $m$ large enough, for example $m=5$, so that $\{c_i^{(n)}, u^{(n)}, T^{(n)}\}$ converge strongly in $C([0,\mathcal T_{\eta,5}]; H^{4})$ to $\{c^\eta_i,u^\eta,T^\eta\}$. With such a convergence in a strong space $H^4$ as well as the strong regularities of $\{c_i^{(n)}, u^{(n)}, T^{(n)}\}$ and $\{c^\eta_i,u^\eta,T^\eta\}$, the convergence of both linear terms and nonlinear terms to the corresponding limits follows easily. Here we only discuss the most challenging term, which involves a cutoff function and a logarithm. To be more specific, we will show that
\begin{align}\label{appendix:desired-conv}
    \int_0^\mathcal T \langle \chi_i^{(n)}c_i^{(n+1)}\log c_i^{(n)}\j\nabla \log  T^{(n+1)},\nabla\psi \rangle dt \to \int_0^\mathcal T \langle \chi^\eta(c_i^\eta)c^\eta_i\log c^\eta_i\j\nabla \log  T^\eta,\nabla\psi \rangle dt.
\end{align}
The difference of the integrands can be bounded by
\begin{align*}
    &|\langle \chi_i^{(n)}c_i^{(n+1)}\log c_i^{(n)}\j\nabla \log  T^{(n+1)},\nabla\psi \rangle  - \langle \chi^\eta(c_i^\eta)c^\eta_i\log c^\eta_i\j\nabla \log  T^\eta,\nabla\psi \rangle|
    \\
    \leq &\left|\left\langle \left(\chi^\eta(c_i^{(n)})\log c_i^{(n)}-\chi^\eta(c^\eta_i)\log c^\eta_i\right) c_i^{(n+1)}\j\nabla \log  T^{(n+1)}, \nabla\psi\right\rangle\right|
    \\
    &+\left|\left\langle  \chi^\eta(c^\eta_i)\log c^\eta_i (c_i^{(n+1)} - c^\eta_i) \j\nabla \log  T^{(n+1)},\nabla\psi  \right\rangle\right|
    \\
    &+ \left|\left\langle  \chi^\eta(c^\eta_i)\log c^\eta_i   (\j\nabla \log  T^{(n+1)} - \j\nabla \log T^\eta) c^\eta_i,\nabla\psi  \right\rangle\right| := A_1 + A_2 + A_3,
\end{align*}
where we recall that $\chi_i^{(n)} = \chi^\eta(c_i^{(n)})$. We first consider $A_1$. From the strong convergence in \eqref{appendix:convergence}, we know that there exists $N>0$ such that for all $n\geq N$ we have 
\[
\|c_i^{(n)} - c^\eta_i\|_{C([0,\mathcal T_{\eta,5}];L^\infty)} \leq \|c_i^{(n)} - c^\eta_i\|_{C([0,\mathcal T_{\eta,5}];H^2)} < \frac{\eta}{2}.
\]
For $(x,t)\in\mathbb T^2 \times [0,\mathcal T_{\eta,5}]$, if $c^\eta_i(x,t) \leq \eta$ and $c_i^{(n)}(x,t)\leq \eta$, then $ \chi^\eta(c_i^{(n)})\log c_i^{(n)}-\chi^\eta(c^\eta_i)\log c^\eta_i = 0$. If $c^\eta_i(x,t) \leq \eta$ and $c_i^{(n)}(x,t)\in [\eta,\frac32 \eta]$, then 
\begin{align*}
    &|\chi^\eta(c_i^{(n)}(x,t))\log c_i^{(n)}(x,t)-\chi^\eta(c^\eta_i(x,t))\log c^\eta_i(x,t)|
    \\
    = &|\chi^\eta(c_i^{(n)}(x,t))\log c_i^{(n)}(x,t)| \leq C_\eta |\chi^\eta(c_i^{(n)}(x,t))|.
\end{align*}
Since $\chi^\eta$ is smooth, $\lim_{n\to\infty} c_i^{(n)}(x,t) = c^\eta_i(x,t)$, and  $c^\eta_i(x,t)\leq \eta$, we conclude that the above goes to zero as $n\to \infty$. If $c^\eta_i(x,t)\geq \eta$, then we know $c_i^{(n)}(x,t) \geq \frac\eta2$ for $n\geq N$. In this case, we have
\begin{align*}
     &|\chi^\eta(c_i^{(n)}(x,t))\log c_i^{(n)}(x,t)-\chi^\eta(c^\eta_i(x,t))\log c^\eta_i(x,t)| 
     \\
     \leq &|\chi^\eta(c_i^{(n)}(x,t)) - \chi^\eta(c^\eta_i(x,t))| |\log c_i^{(n)}(x,t)| + |\chi^\eta(c^\eta_i(x,t))| |\log c_i^{(n)}(x,t) - \log c^\eta_i(x,t)|
     \\
     \leq &C_{\eta} |\chi^\eta(c_i^{(n)}(x,t)) - \chi^\eta(c^\eta_i(x,t))| (1+\|c_i^{(n)}\|_{L^\infty(0,\mathcal T_{\eta,5};L^\infty)}) + |\log c_i^{(n)}(x,t) - \log c^\eta_i(x,t)|,
\end{align*} for any $n \ge N$. Letting $n \rightarrow \infty$, the latter converges to 0 due to the uniform bound \eqref{appendix:uni-bdd-ci} and the smoothness of the cutoff function $\chi^\eta$. Combining all these cases, one obtains 
\begin{align*}
    \lim_{n\to \infty}\|\chi^\eta(c_i^{(n)})\log c_i^{(n)}-\chi^\eta(c^\eta_i)\log c^\eta_i\|_{L^\infty(0,\mathcal T_{\eta,5};L^\infty)} = 0.
\end{align*}
Consequently, we have
\begin{align*}
    \int_0^\mathcal T A_1 dt \leq &C\|\chi^\eta(c_i^{(n)})\log c_i^{(n)}-\chi^\eta(c^\eta_i)\log c^\eta_i\|_{L^\infty(0,\mathcal T_{\eta,5};L^\infty)} \|c_i^{(n+1)}\|_{L^\infty(0,\mathcal T_{\eta,5}; L^2)} 
    \\
    &\hspace{0.5cm}\times \|\j \nabla \log T^{(n+1)}\|_{L^\infty(0,\mathcal T_{\eta,5}; L^2)} \|\nabla\psi\|_{L^\infty} \to 0
\end{align*} due to the uniform-in-$n$ boundedness of the iterative concentrations and temperature. 
For $A_2$ and $A_3$, it holds that 
\[
\|\chi^\eta(c^\eta_i)\log c^\eta_i\|_{L^\infty(0,\mathcal T_{\eta,5};L^\infty)} \leq C_\eta \|c^\eta_i\|_{L^\infty(0,\mathcal T_{\eta,5};L^\infty)}.
\]
By making use of the uniform  bounds of $c_i^{(n)}$ and $T^{(n)}$, and the regularity of $c^\eta_i$ and $T^\eta$, we deduce that $\int_0^\mathcal T A_2 dt \to 0$ and $\int_0^\mathcal T A_3 dt \to 0$. Therefore we obtain the desired convergence \eqref{appendix:desired-conv}.

In conclusion, one obtains \eqref{appendix:1}--\eqref{appendix:3} with the sequence of functions replaced by their limits. Therefore, we obtain that for each $m\geq 2$, there exists a time $\mathcal T_{\eta,m}>0$ and a local solution $(c^\eta_i, u^\eta, T^\eta)$ to the mollified system \eqref{N-NPNS-mo-system} with regularity 
\[
(c^\eta_i, u^\eta, T^\eta) \in C([0,\mathcal T_{\eta,m}]; H^{m-1}) \cap L^\infty(0,\mathcal T_{\eta,m}; H^m)\cap L^2(0,\mathcal T_{\eta,m}; H^{m+1}).
\]
We note that the existence time depends on the regularity of solutions. However, in the next appendix we will prove by induction that for any $m$, the existence time is indeed infinity.

\section{Global existence of solutions to the mollified system}
\label{sec:appendix-c}
In this appendix, we fix $\eta>0$ and extend the local solutions constructed in Appendix B. 
For every finite $\mathcal T>0$, we derive $\eta$-dependent bounds on $[0,\mathcal T]$. 
These bounds prevent finite-time blow-up and therefore imply global existence for the $\eta$-regularized system. In the following, we will show that for any $m\geq0$, any solution $(c_i^{\eta}, u^{\eta}, T^{\eta})$ obtained from Appendix~\ref{sec:appendix-b} satisfies
\[
(c_i^{\eta}, u^{\eta}, T^{\eta})\in L^\infty(0,\mathcal{T}; H^m)\cap L^2(0, \mathcal{T}; H^{m+1})
\]
for any $\mathcal T>0$. 

% Let $\mathcal{T}_{\max,m} >0$ be the maximal time of existence of solutions $(c_i^{\eta}, u^{\eta}, T^{\eta})$ with regularity 
% \[
% C([0,\mathcal{T}_{\max,m} ); H^{m-1}) \cap L^\infty(0,\mathcal{T}_{\max,m}; H^m)\cap L^2(0, \mathcal{T}_{\max,m}; H^{m+1})
% \]
% for $m\geq 2$.
Firstly, we note that solutions $c_i^\eta, u^\eta$, and $T^\eta$ are bounded in $L^\infty(0,\mathcal T; L^2)\cap L^2(0,\mathcal T; H^1)$ for arbitrary $\mathcal T>0$, a fact whose proof follows along the lines of Steps 1--3 of Section~\ref{section:appendix-2}. We also recall that $T^\eta\geq T^*>0$ and $c_i^\eta \geq 0$.
Next, we derive $H^m$ bounds for any integer $m \ge 1$ using an induction argument. 
  
    We start with the base case $m = 1$. In fact, the $H^1$ evolution of $T^\eta$ obeys
    \begin{align*}
    \frac12 \frac{d}{dt} \| T^{\eta}\|_{H^1}^2 + \kappa \|\nabla T^{\eta}\|_{H^1}^2 = &\sum\limits_{0\leq |\alpha|\leq 1}\int_{\TT^2} D^\alpha (\j u^{\eta}  T^{\eta})\cdot D^\alpha \nabla T^{\eta} dx
    \leq C_\eta \|u^\eta\|_{L^2} \|T^{\eta}\|_{H^1}\|\nabla T^\eta\|_{H^1}
    \end{align*} due to standard cancellation laws, integration by parts, and mollifier bounds.  By Young's inequality, it follows that 
    \begin{align*}
        \frac{d}{dt} \| T^{\eta}\|_{H^1}^2 + \kappa \|\nabla T^{\eta}\|_{H^1}^2  \leq C_\eta \|u^\eta\|^2_{L^2} \|T^{\eta}\|_{H^1}^2.
    \end{align*}
    Thanks to the Gr\"onwall inequality and by virtue of the bound of $u^\eta$ in $L^2$, we conclude that
    \begin{align*}
        \sup_{t\in[0,\mathcal T]}\| T^{\eta}(t)\|_{H^1}^2 + \kappa \int_0^\mathcal T \|\nabla T^{\eta}(s)\|_{H^1}^2 ds <\infty.
    \end{align*} 
    %\todo[inline]{we can do the $L^\infty$ and $L^2$ bounds here for every $m$, and then at the end mention that we can get continuous in time for every $m$.}
    Thus, the family of regularized temperatures $T^\eta$ is bounded in $L^\infty(0,\mathcal T; H^1)\cap L^2(0,\mathcal T; H^2)$. 

    As for the $H^1$ evolution of the velocities $u^{\eta}$, we have  
    \begin{equation*}
    \begin{split}
        &\frac{d}{dt}\|u^{\eta}\|_{H^1}^2 + 2\nu \|\nabla u^{\eta}\|_{H^1}^2 
     \\
     = &- \sum\limits_{0\leq |\alpha|\leq 1} \langle D^\alpha (\mathcal J_\eta  u^{\eta} \cdot \nabla u^{\eta}), D^\alpha u^{\eta}\rangle 
     + \sum\limits_{0\leq |\alpha|\leq 1}  g\alpha_T \langle D^\alpha( T^{\eta}-T_r), D^\alpha u^{\eta}_2 \rangle 
     \\
     &\quad\quad- \sum\limits_{0\leq |\alpha|\leq 1} \langle D^\alpha\mathcal J_\eta \left(\rho^{\eta}\nabla\j \phi^{\eta}\right), D^\alpha u^{\eta} \rangle
     \\
    \leq & C_\eta (\|\j u^\eta\cdot\nabla u^\eta\|_{L^2} + \|T^\eta- T_r\|_{L^2} + \|\rho^\eta \j\phi^\eta\|_{L^2} ) \|\nabla u^{\eta}\|_{H^1}
    \\
    \leq & C_\eta(1+\|u^\eta\|_{L^2} \|u^\eta\|_{H^1} + \|T^\eta\|_{L^2} + \sum_{i=1}^N \|c^\eta_i\|_{L^2}^2 ) \|\nabla u^{\eta}\|_{H^1},
    \end{split}
\end{equation*}
    where we have used Poincar\'e's inequality for $u^\eta$. Thanks to Young's inequality, we deduce that 
    \begin{align*}
        \frac{d}{dt}\|u^{\eta}\|_{H^1}^2 + \nu \|\nabla u^{\eta}\|_{H^1}^2 \leq C_\eta(1+\|u^\eta\|^2_{L^2} \|u^\eta\|^2_{H^1} + \|T^\eta\|^2_{L^2} + \sum_{i=1}^N \|c^\eta_i\|_{L^2}^4 ).
    \end{align*}
    By making use of Gr\"onwall's inequality and the global bounds derived for $\|u^\eta\|_{L^2}$, $\|T^\eta\|_{L^2}$, and $\|c^\eta_i\|_{L^2}$, we conclude that 
    $u^\eta\in L^\infty(0,\mathcal T; H^1)\cap L^2(0,\mathcal T; H^2)$.

    Now the $H^1$ evolution of the regularized concentrations $c_i^{\eta}$ satisfies
    \begin{align*}
    &\frac12 \frac{d}{dt} \|c_i^{\eta}\|_{H^1}^2 + D_i \|\nabla c_i^{\eta}\|_{H^1}^2 
    =-\sum\limits_{0\leq |\alpha|\leq 1}\int_{\TT^2} D^\alpha (\j u^{\eta} \cdot\nabla c_i^{\eta})\cdot D^\alpha c_i^{\eta} dx 
    \\
    &\quad\quad- \frac{D_i ez_i}{k_B} \sum\limits_{0\leq |\alpha|\leq 1} \int_{\TT^2}D^\alpha( \frac{1}{T^{\eta}} c_i^{\eta} \nabla \j\phi^{\eta}) \cdot D^\alpha\nabla c_i^{\eta} dx
    \\
    &\quad\quad\quad\quad-D_i\sum\limits_{0\leq |\alpha|\leq 1} \int_{\TT^2} D^\alpha(\chi_i^{\eta}(c_i^\eta)c_i^{\eta}\log c_i^{\eta}\j\nabla \log  T^{\eta})  \cdot D^\alpha\nabla c_i^{\eta}dx
    \\
    \leq &C(\|\j u^{\eta} \cdot\nabla c_i^{\eta}\|_{L^2} + \|\frac{1}{T^{\eta}} c_i^{\eta} \nabla \j\phi^{\eta}\|_{H^1} + \|\chi_i^{\eta}(c_i^\eta)c_i^{\eta}\log c_i^{\eta}\j\nabla \log  T^{\eta}\|_{H^1} )\|\nabla c_i^{\eta}\|_{H^1}
    \\
    \leq &C_\eta( \|u^\eta\|_{L^2} \|c_i^\eta\|_{H^1} + \|T^\eta\|_{H^1} \|c_i^\eta\|_{L^\infty}  + \|c_i^\eta\|_{H^1}+ \|\chi_i^{\eta}(c_i^\eta)c_i^{\eta}\log c_i^{\eta}\|_{H^1} \|T^\eta\|_{H^1} )\|\nabla c_i^{\eta}\|_{H^1}
    \\
    \leq &C_\eta( \|u^\eta\|_{L^2} \|c_i^\eta\|_{H^1} + \|T^\eta\|_{H^1} \|c_i^\eta\|_{L^\infty}  + \|c_i^\eta\|_{H^1} 
    + (1+\|c_i^\eta\|_{L^\infty}) \|c_i^\eta\|_{H^1} \|T^\eta\|_{H^1} )\|\nabla c_i^{\eta}\|_{H^1},
\end{align*}
where we used the estimate \eqref{est:iteration-duality} for the potential term $\nabla\j\phi^\eta$ and applied Proposition~\ref{prop:log} to deal with the logarithmic term. By Agmon's inequality, we have $\|c_i^\eta\|_{L^\infty} \leq C \|c_i^\eta\|_{L^2}^{\frac12} \|c_i^\eta\|_{H^2}^{\frac12}$, and thus we obtain 
\begin{align*}
    &\frac12 \frac{d}{dt} \|c_i^{\eta}\|_{H^1}^2 + D_i \|\nabla c_i^{\eta}\|_{H^1}^2 
    \\
    \leq &C_\eta( \|u^\eta\|_{L^2} \|c_i^\eta\|_{H^1} + \|T^\eta\|_{H^1} \|c_i^\eta\|_{L^2}^{\frac12} \|c_i^\eta\|_{H^2}^{\frac12}  + \|c_i^\eta\|_{H^1} + (1+\|c_i^\eta\|_{L^2}^{\frac12} \|c_i^\eta\|_{H^2}^{\frac12} )\|c_i^\eta\|_{H^1} \|T^\eta\|_{H^1} )\|\nabla c_i^{\eta}\|_{H^1}.
\end{align*}
Applying Young's inequality  yields
\begin{align*}
    &\frac{d}{dt} \|c_i^{\eta}\|_{H^1}^2 +  D_i \|\nabla c_i^{\eta}\|_{H^1}^2 
    \\
    \leq &C_\eta\Big(1+\|u^\eta\|^2_{L^2} + \|T^\eta\|^2_{H^1} +   \|T^\eta\|^4_{H^1}    \|c_i^\eta\|^2_{L^2}+  \|c_i^\eta\|_{L^2}^2(1+\|T^\eta\|^4_{H^1}  \|c_i^\eta\|^2_{H^1})\Big) \|c_i^\eta\|^2_{H^1}.
\end{align*}
Since $T^\eta,u^\eta\in L^\infty(0,\mathcal T; H^1)\cap L^2(0,\mathcal T; H^2)$ and $c_i^\eta \in L^\infty(0,\mathcal T; L^2)\cap L^2(0,\mathcal T; H^1)$, we conclude that $c_i^\eta\in L^\infty(0,\mathcal T; H^1)\cap L^2(0,\mathcal T; H^2)$ by Gr\"onwall's inequality. This finishes the proof of the case $m=1$.

Next, for any $m\geq 2$, we assume that $(T^\eta,u^\eta,c_i^\eta)\in L^\infty(0,\mathcal T; H^{m-1})\cap L^2(0,\mathcal T; H^m)$. Our goal is to show that  $(T^\eta,u^\eta,c_i^\eta)\in L^\infty(0,\mathcal T; H^{m})\cap L^2(0,\mathcal T; H^{m+1})$.
The bounds $(T^{\eta},u^{\eta})$ in $L^\infty(0,\mathcal T; H^{m})\cap L^2(0,\mathcal T; H^{m+1})$  are obtained by following verbatim the proof of Step 4 in Appendix~\ref{section:appendix-2} with the superscripts $(n)$ and $(n+1)$ replaced by $\eta$. The induction hypothesis is not needed there. 
As for the $H^m$ evolution of  $c_i^{\eta}$, it holds that 
\begin{align*}
    &\frac12 \frac{d}{dt} \|c_i^{\eta}\|_{H^m}^2 + D_i \|\nabla c_i^{\eta}\|_{H^m}^2 
    = -\sum\limits_{0\leq |\alpha|\leq m}\int_{\TT^2} D^\alpha (\j u^{\eta} \cdot\nabla c_i^{\eta})\cdot D^\alpha c_i^{\eta} dx 
    \\
    &\quad\quad- \frac{D_i ez_i}{k_B} \sum\limits_{0\leq |\alpha|\leq m} \int_{\TT^2}D^\alpha( \frac{1}{T^{\eta}} c_i^{\eta} \nabla \j\phi^{\eta}) \cdot D^\alpha\nabla c_i^{\eta} dx
    \\
    &\quad\quad\quad\quad-D_i\sum\limits_{0\leq |\alpha|\leq m} \int_{\TT^2} D^\alpha(\chi_i^{\eta}(c_i^\eta)c_i^{\eta}\log c_i^{\eta}\j\nabla \log  T^{\eta})  \cdot D^\alpha\nabla c_i^{\eta}dx.
\end{align*}
The estimates of the first two terms on the right-hand side follow verbatim the analogous ones derived in Step 4 of Appendix~\ref{section:appendix-2}, yielding  
\begin{align*}
    &\frac12 \frac{d}{dt} \|c_i^{\eta}\|_{H^m}^2 + D_i \|\nabla c_i^{\eta}\|_{H^m}^2 
    \leq C_\eta (\|c_i^\eta\|_{H^m}^2 + \|c_i^\eta\|_{H^m} \|\nabla c_i^\eta\|_{H^m})
    \\
    &\quad\quad\quad\quad-D_i\sum\limits_{0\leq |\alpha|\leq m} \int_{\TT^2} D^\alpha(\chi_i^{\eta}(c_i^\eta)c_i^{\eta}\log c_i^{\eta}\j\nabla \log  T^{\eta})  \cdot D^\alpha\nabla c_i^{\eta}dx.
\end{align*}
As for the logarithmic term, we use the fact that $H^m$ is a Banach algebra and we apply Proposition~\ref{prop:log} to get
\begin{align*}
    \left\|\chi_i^{\eta}(c_i^{\eta})c_i^{\eta}\log c_i^{\eta}\j\nabla \log  T^{\eta} \right\|_{H^m} \leq &\left\|\chi_i^{\eta}(c_i^{\eta})c_i^{\eta}\log c_i^{\eta} \right\|_{H^m} \| \j\nabla \log  T^{\eta}\|_{H^m} 
    \\
    \leq &C_\eta(\|c_i^\eta\|_{H^m} + \|c_i^\eta\|_{H^m}^2 + \|c_i^\eta\|_{H^{m-1}}^{m-1} \|c_i^\eta\|_{H^m}^2) \|T^\eta\|_{H^1}.
\end{align*} 
%where we have used the continuous embedding of $H^2$ in $L^\infty$. 
Therefore, by Young's inequality, we have
\begin{align*}
    \frac{d}{dt} \|c_i^{\eta}\|_{H^m}^2 + D_i \|\nabla c_i^{\eta}\|_{H^m}^2 \leq C(1+\|T^\eta\|_{H^1}^2)(1 +\|c_i^\eta\|_{H^{m}}^2 + \|c_i^\eta\|_{H^{m-1}}^{2m-2}\|c_i^\eta\|_{H^{m}}^2 ) \|c_i^\eta\|_{H^m}^2.
\end{align*}
By virtue of the induction hypothesis, we have $c_i^\eta\in L^\infty(0,\mathcal T; H^{m-1})\cap L^2(0,\mathcal T; H^{m})$.  Hence, by Gr\"onwall's inequality, we achieve the desired global bounds for $c_i^\eta$. 

It remains to justify the time continuity. From the equations and the bounds already obtained, one has
$
(\partial_t T^\eta,\ \partial_t u^\eta,\ \partial_t c_i^\eta)
\in L^2(0,\mathcal T;H^{m-1})
$
whenever
$
(T^\eta,u^\eta,c_i^\eta)\in L^\infty(0,\mathcal T;H^m)\cap L^2(0,\mathcal T;H^{m+1}).
$
Indeed, the diffusion terms belong to $L^2(0,\mathcal T;H^{m-1})$, while the nonlinear terms are controlled by the mollifier estimates, the lower bound $T^\eta\ge T^*$, and the product estimates used above. Hence, by the Lions-Magenes lemma,
\[
(c_i^\eta,u^\eta,T^\eta)\in C([0,\mathcal T];H^m).
\] 
Since these estimates remain finite on every finite time interval, the local solution constructed in Appendix B can be continued globally in time. This proves Proposition~\ref{ap}.

\vspace{0.5cm}

{\bf{Acknowledgments.}} E.A. was partially supported by the University Research Board (URB) of the American University of Beirut under Grant No. 104752. Q.L. was partially supported by the Simons Foundation (SFI-MPS-TSM-00013384).

\vspace{0.5cm}

{\bf{Data Availability Statement.}} The research does not have any associated data.

\vspace{0.5cm}

{\bf{Conflict of Interest.}} The authors declare that they have no conflict of interest.

\bibliographystyle{plain}
\bibliography{Reference}

\end{document}